\documentclass{amsart}
\usepackage[foot]{amsaddr}
\usepackage{amssymb,amsmath,amsfonts,mathptmx,cite,mathrsfs,url}

\DeclareMathOperator{\alf}{alph}
\DeclareMathOperator{\mul}{mul}
\DeclareMathOperator{\simple}{sim}
\DeclareMathOperator{\var}{var}
\DeclareMathOperator{\ini}{ini}

\newtheorem{theorem}{Theorem}[section]
\newtheorem{proposition}[theorem]{Proposition}
\newtheorem{lemma}[theorem]{Lemma}
\newtheorem{corollary}[theorem]{Corollary}

\theoremstyle{remark}

\numberwithin{equation}{section}

\makeatletter

\renewcommand*\subjclass[2][2010]{\def\@subjclass{#2}\@ifundefined{subjclassname@#1}{\ClassWarning{\@classname}{Unknown edition (#1) of Mathematics Subject Classification; using '2010'.}}{\@xp\let\@xp\subjclassname\csname subjclassname@#1\endcsname}}

\renewcommand{\subjclassname}{\textup{2010} Mathematics Subject Classification}

\makeatother

\begin{document}

\title{Cross varieties of aperiodic monoids}
\thanks{S.V.Gusev was supported by the grant of the Russian Science Foundation No.~25-71-00005, \url{https://rscf.ru/en/project/25-71-00005/}.}

\author{Sergey V. Gusev}
\address{Institute of Natural Sciences and Mathematics, Ural Federal University, Lenina str.~51, 620000 Ekaterinburg, Russia}
\email{sergey.gusb@gmail.com}

\begin{abstract}
A variety of universal algebras is called Cross if it is finitely based, finitely generated, and has only finitely many subvarieties.
A monoid is aperiodic if all its subgroups are trivial.
In the present article, we show that a variety of aperiodic monoids is Cross if and only if it excludes, as subvarieties, a certain list of 22 almost Cross varieties.
Consequently, this list of 22 varieties exhausts all almost Cross varieties of aperiodic monoids.
\end{abstract}

\keywords{Monoid, variety, Cross variety, almost Cross variety}

\subjclass{20M07}

\maketitle

\section{Introduction}
\label{introduction}

A \textit{variety} is a class of universal algebras of a fixed signature that is closed under the formation of homomorphic images, subalgebras, and arbitrary direct products.
According to Birkhoff's theorem~\cite{Birkhoff-35}, varieties are precisely the classes of universal algebras that satisfy a given set of identities.
A variety is said to be \textit{finitely based} if it can be defined by a finite set of identities, \textit{finitely generated} if it is generated by a finite algebra, and \textit{small} if its lattice of subvarieties is finite.
Following Higman~\cite{Higman-60}, a variety that satisfies all three conditions is called \textit{Cross}.

Finite groups~\cite{Oates-Powell-64}, associative rings~\cite{Lvov-73,Kruse-73}, Lie rings~\cite{Bahturin-Olshanski-75}, and lattices~\cite{McKenzie-70} generate Cross varieties.
Semigroups and monoids are, perhaps, the only ``classical'' types of universal algebras for which the analogous statement does not hold. In particular, there exist finite semigroups and monoids that generate non-finitely based varieties (see the survey~\cite{Volkov-01}). 
A finitely generated variety may also fail to be Cross due to the infiniteness of its lattice of subvarieties (see, for example, the recent survey~\cite{Araujo-Araujo-Cameron-Lee-Raminhos-23}).

For any class of algebras that contains non-Cross varieties, one approach to characterizing Cross varieties is to identify its minimal non-Cross varieties, which are commonly called \textit{almost Cross} varieties. 
It follows from Zorn's lemma that the exclusion of almost Cross subvarieties is not only necessary but also sufficient for a variety to be Cross.

The problem of finding almost Cross varieties of groups attracted considerable attention (see~\cite{Kovacs-Newman-71,Olshanski-71,Razmyslov-72}, for instance). 
In the general case, however, this problem has turned out to be transcendently difficult, in view of the following result~\cite{Kozhevnikov-12}: there exist continuum many non-finitely based almost Cross varieties of groups whose lattices of subvarieties are isomorphic to the three-element chain.
Nevertheless, no explicit example of such a variety is known to date.

The present article is concerned with Cross varieties of \textit{monoids}, i.e., semigroups with an identity element.
For a long time, only two explicit examples of non-group almost Cross varieties of monoids were known: the variety $\mathbf{Com}$ of all commutative monoids~\cite{Head-68} and the variety $\mathbf B$ of all idempotent monoids~\cite{Wismath-86}.
In 2005, Jackson~\cite{Jackson-05} discovered two further explicit examples.
Both of these examples consist of \textit{aperiodic monoids}, i.e., monoids all of whose subgroups are trivial.
Since then, the problem of classifying almost Cross varieties of aperiodic monoids has attracted much attention: a number of new examples of almost Cross varieties have been found, and descriptions of such varieties have also been obtained for several subclasses of the class of aperiodic monoids (see~\cite{Lee-13,Lee-14,Gusev-20,Gusev-25IJAC,Gusev-Sapir-22,Gusev-Li-Zhang-25,Gusev-Lee-Zhang-26,Gusev-Vernikov-18,Jackson-Lee-18,Sapir-21,Sapir-23}).
In the present paper, we complete these investigations and provide a complete description of almost Cross varieties of aperiodic monoids by giving an exhaustive finite list of such varieties.
Note that, in view of the above-mentioned result~\cite{Kozhevnikov-12}, one can hardly hope to describe almost Cross varieties of monoids in general.

This paper is structured as follows.
Some background information and preliminary results are presented in Section~\ref{sec: preliminaries}.
Then almost Cross varieties of aperiodic monoids and related information required to establish the main result are given in Section~\ref{sec: almost}.
In particular, three additional new almost Cross varieties (up to duality) are exhibited in Subsections~\ref{subsec: xyxz=xyxzx} and~\ref{subsec: A1}, bringing the total number of known almost Cross varieties of aperiodic monoids to 22.
In Section~\ref{sec: characterization}, it is shown that a variety of aperiodic monoids is Cross if and only if it excludes these 22 almost Cross varieties, thus completing the description of all almost Cross varieties of aperiodic monoids.

\section{Preliminaries}
\label{sec: preliminaries}

\subsection{Words, identities, Rees quotients of free monoids}

Let $\mathscr X^\ast$ denote the free monoid over a countably infinite alphabet~$\mathscr X$. 
Elements of~$\mathscr X$ are called \textit{letters} and elements of~$\mathscr X^\ast$ are called \textit{words}. 
We treat the identity element~1 of~$\mathscr X^\ast$ as \textit{the empty word}. 
The \textit{alphabet} of a word $\mathbf w$, i.e., the set of all letters occurring in $\mathbf w$, is denoted by $\alf(\mathbf w)$. 
For a word $\mathbf w$ and a letter $x$, let $|\mathbf w|_x$ denote the number of occurrences of $x$ in $\mathbf w$.
A letter $x$ is called \textit{simple} [\textit{multiple}] \textit{in a word} $\mathbf w$ if $x$ occurs in $\mathbf w$ once [more then once]. 
The set of all simple [multiple] letters in a word $\mathbf w$ is denoted by $\simple(\mathbf w)$ [respectively, $\mul(\mathbf w)$]. 

An \textit{identity} is an expression $\mathbf u\approx\mathbf v$, where $\mathbf u,\mathbf v\in\mathscr X^\ast$.
An identity $\mathbf u\approx\mathbf v$ is \textit{non-trivial} if $\mathbf u\ne\mathbf v$.
A monoid variety defined by a set $\Sigma$ of identities is denoted by $\var\,\Sigma$.
The subvariety of a variety $\mathbf V$ defined by a set $\Sigma$ of identities is denoted by $\mathbf V \Sigma$.
We say that a set $\Sigma_1$ of identities is \textit{equivalent modulo} $\Sigma$ to a set $\Sigma_2$ of identities if $\var(\Sigma\cup\Sigma_1)=\var(\Sigma\cup\Sigma_2)$.

For any set of words $\mathscr W\subseteq\mathscr X^\ast$, let $M(\mathscr W)$ denote the Rees quotient over the ideal of $\mathscr X^\ast$ consisting of all words that are not subwords of words in $\mathscr W$. 
Given any set $\mathscr W$ of words, let $\mathbf M(\mathscr W)$ denote the variety generated by the monoid $M(\mathscr W)$. 
A word~$\mathbf w$ is an \textit{isoterm} for a variety~$\mathbf V$ if it violates any non-trivial identity of the form $\mathbf w \approx\mathbf w^\prime$. 

\begin{lemma}[\!{\cite[Lemma~3.3]{Jackson-05}}] 
\label{L: isoterm}
For any variety~$\mathbf V$ and any set of words~$\mathscr W$, the inclusion $\mathbf M(\mathscr W)\subseteq\mathbf V$ holds if and only if all words from $\mathscr W$ are isoterms for~$\mathbf V$.\qed
\end{lemma}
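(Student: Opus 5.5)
The plan is to prove the two implications separately, using only the definition of $M(\mathscr W)$ as a Rees quotient of $\mathscr X^\ast$. Its nonzero elements are exactly the words that are subwords of words in $\mathscr W$, multiplied by concatenation, and a product equals $0$ as soon as the concatenation is no longer such a subword. Throughout, I will write $0$ for the zero of $M(\mathscr W)$.

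For the forward direction, assume $\mathbf M(\mathscr W)\subseteq\mathbf V$, take $\mathbf w\in\mathscr W$, and suppose $\mathbf V$ satisfies $\mathbf w\approx\mathbf w'$.
\begin{itemize}
\item Evaluate both sides in $M(\mathscr W)$ under the assignment sending every letter $x$ to the element $x$ (or to $0$ if $x$ is not a subword of a word in $\mathscr W$).
\item The left side evaluates to $\mathbf w$ itself, which is a nonzero element because $\mathbf w$ is a subword of itself.
\item The right side evaluates either to $0$ or to the word $\mathbf w'$.
\item Since $M(\mathscr W)\in\mathbf V$ satisfies the identity, the two values agree, so $\mathbf w'=\mathbf w$ as words.
\end{itemize}
Hence $\mathbf w$ is an isoterm for $\mathbf V$.

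For the converse, assume every word in $\mathscr W$ is an isoterm for $\mathbf V$, and let $\mathbf u\approx\mathbf v$ be an identity of $\mathbf V$. Suppose, for a contradiction, that some assignment $\varphi\colon\mathscr X\to M(\mathscr W)$ gives $\varphi(\mathbf u)\ne\varphi(\mathbf v)$. By symmetry we may take $\varphi(\mathbf u)\ne0$.
\begin{enumerate}
\item Because $\varphi(\mathbf u)\ne 0$, every letter of $\mathbf u$ is sent to a nonzero element, that is, to a word, possibly empty.
\item The value $\varphi(\mathbf u)$ is then the concatenation of these words, and it is a subword of some $\mathbf w\in\mathscr W$. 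So we can write $\mathbf w=\mathbf p\,\varphi(\mathbf u)\,\mathbf q$.
\item Define a substitution $\theta$ of $\mathscr X^\ast$ into itself. Put $\theta(x)=\varphi(x)$, read as a word, whenever $\varphi(x)\ne0$. Put $\theta(x)=z$ whenever $\varphi(x)=0$, where $z$ is a fixed letter not occurring in $\mathbf w$.
\item Varieties are closed under substitution and under multiplying both sides of an identity by fixed words. Hence $\mathbf V$ satisfies
\[
\mathbf w=\mathbf p\,\theta(\mathbf u)\,\mathbf q\approx\mathbf p\,\theta(\mathbf v)\,\mathbf q .
\]
\item Since $\mathbf w$ is an isoterm, this identity is trivial, so $\theta(\mathbf v)=\theta(\mathbf u)=\varphi(\mathbf u)$.
\item The word $\theta(\mathbf v)$ then contains no occurrence of $z$. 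So no letter of $\mathbf v$ is sent to $0$, and $\varphi(\mathbf v)$ is the concatenated word $\theta(\mathbf v)=\varphi(\mathbf u)$.
\end{enumerate}
This contradicts the choice of $\varphi$. Therefore $M(\mathscr W)$ satisfies every identity of $\mathbf V$, and so $\mathbf M(\mathscr W)\subseteq\mathbf V$.

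The only delicate point is the converse, specifically letters of $\mathbf v$ that $\varphi$ sends to $0$. These cannot simply be substituted by a word. The fresh letter $z$ handles them, because the isoterm property then rules them out. Letters sent to the identity element $1$ need no special treatment, since $\theta$ simply erases them.
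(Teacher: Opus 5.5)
Your proof is correct and complete. The paper gives no argument of its own (it simply cites Jackson), and yours is the standard one: necessity by evaluating along the natural quotient map $\mathscr X^\ast\to M(\mathscr W)$, and sufficiency by lifting a separating assignment to a substitution and placing it inside a word of $\mathscr W$, with the fresh letter $z$ correctly handling letters of $\mathbf v$ sent to $0$.
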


For brevity, if $\mathbf w_1,\dots,\mathbf w_k\in\mathcal X^\ast$, then we will write $M(\mathbf w_1,\dots,\mathbf w_k)$ [respectively, $\mathbf M(\mathbf w_1,\dots,\mathbf w_k)$] rather than $M(\{\mathbf w_1,\dots,\mathbf w_k\})$ [respectively, $\mathbf M(\{\mathbf w_1,\dots,\mathbf w_k\})$].{\sloppy

}
\subsection{Varieties of commutative monoids and idempotent monoids}
\label{subsec: comm idem}

The variety $\mathbf{Com}$ of all commutative monoids is the first published example of a non-group almost Cross variety of monoids, while the variety $\mathbf{B}$ of all idempotent monoids is the first such example in the class of aperiodic monoids.{\sloppy

}
\begin{proposition}[\!{\cite{Head-68}}]
\label{P: Com}
The variety $\mathbf{Com}$ of all commutative monoids is a non-finitely generated almost Cross variety.
\end{proposition}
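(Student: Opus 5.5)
The plan is to verify four things: $\mathbf{Com}$ is not finitely generated, it is finitely based, every proper subvariety is Cross, and $\mathbf{Com}$ has infinitely many subvarieties.

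\emph{$\mathbf{Com}$ is finitely based.} This is immediate, since $\mathbf{Com}=\var\{xy\approx yx\}$.

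\emph{$\mathbf{Com}$ is not finitely generated.} Let $M$ be a finite commutative monoid. Then for some $n\ge 1$ and $k\ge 1$ we have $a^{n}=a^{n+k}$ for every $a\in M$; take $n$ to be the maximal index and $k$ the least common multiple of the periods of the elements. So $M$ satisfies $x^n\approx x^{n+k}$, which fails in $\mathbf{Com}$: in the free commutative monoid $(\mathbb N,+)$ we have $n\ne n+k$. This also shows that $\mathbf{Com}$ is not Cross.

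\emph{Description of the proper subvarieties.} The central step is to describe all proper subvarieties of $\mathbf{Com}$. In the commutative setting an identity $\mathbf u\approx\mathbf v$ is determined by the exponent vectors $(|\mathbf u|_x)_x$ and $(|\mathbf v|_x)_x$. Let $\mathbf V$ be a proper subvariety, so it satisfies some identity $\mathbf u\approx\mathbf v$ with $|\mathbf u|_x\ne|\mathbf v|_x$ for some letter $x$. Substitute $x\mapsto x$ and every other letter $\mapsto 1$; this yields $x^p\approx x^q$ with $p<q$.

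Conversely, I will show that every subvariety of $\mathbf{Com}$ is defined, modulo commutativity, by identities in one variable. The standard argument runs as follows. Let $\mathbf V$ satisfy $x^p\approx x^{p+d}$, and choose $p$ minimal and $d$ minimal (the set of such periods is closed under gcd). Given an identity $\mathbf u\approx\mathbf v$ of $\mathbf V$, identify all letters except one with $1$. This shows that for each letter $x$ the exponents $|\mathbf u|_x$ and $|\mathbf v|_x$ agree modulo the one-variable theory: they are equal if both are less than $p$, and otherwise they are congruent mod $d$ and both at least $p$. Then $\mathbf u\approx\mathbf v$ follows letterwise from $x^p\approx x^{p+d}$ and commutativity. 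Hence the proper subvarieties are exactly the varieties $\mathbf{Com}\{x^p\approx x^{p+d}\}$ with $p\ge0$, $d\ge1$. (The case $p=0$ gives $x^d\approx1$, abelian groups of exponent dividing $d$.)

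\emph{Each proper subvariety is Cross.} Such a variety $\mathbf{Com}\{x^p\approx x^{p+d}\}$ has two defining identities, so it is finitely based. It is generated by the finite cyclic monoid $\langle a\mid a^p=a^{p+d}\rangle$, because that monoid satisfies exactly this one-variable theory and the theory above is then determined. Its subvarieties are the varieties $\mathbf{Com}\{x^{p'}\approx x^{p'+d'}\}$ with $p'\le p$ and $d'\mid d$, of which there are only finitely many.

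\emph{$\mathbf{Com}$ has infinitely many subvarieties.} This holds since, for example, all the varieties $\mathbf{Com}\{x^p\approx x^{p+1}\}$ are distinct.

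Together these steps give minimality among non-Cross varieties. I expect the main obstacle to be the reduction showing that every identity of $\mathbf V$ is a consequence of its one-variable consequences. This is the only nontrivial step, and it rests on the letterwise analysis of exponents described above.
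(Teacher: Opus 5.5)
Your proposal is correct, and it is essentially the cited argument: the paper gives no proof of its own and simply refers to Head's description of the varieties of commutative monoids. That description is exactly what you rederive, namely that every proper subvariety is $\mathbf{Com}\{x^p\approx x^{p+d}\}$ and is generated by the finite monogenic monoid $\langle a\mid a^p=a^{p+d}\rangle$.
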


\begin{proposition}[\!{\cite[Proposition~4.7]{Wismath-86}}]
\label{P: B}
The variety $\mathbf B$ of all idempotent monoids is a non-finitely generated almost Cross variety.
\end{proposition}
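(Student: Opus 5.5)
We need to show that $\mathbf B$ is not finitely generated, that every proper subvariety of $\mathbf B$ is Cross, and that $\mathbf B$ itself is not Cross. The tool is the classical description of the lattice of varieties of bands (Birjukov, Fennemore, Gerhard), transferred to monoids. The monoid version is due to Wismath. Every proper subvariety of $\mathbf B$ is defined within $\mathbf B$ by a single identity. The lattice of subvarieties of $\mathbf B$ is a countably infinite chain-like lattice of finite width, in which the subvarieties form an $\omega$-sequence $\mathbf B_1\subset\mathbf B_2\subset\cdots$ with join $\mathbf B$. Each $\mathbf B_n$ has only finitely many subvarieties and is defined by finitely many identities. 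This recursive construction uses the words $\ini(\mathbf w)$ and its dual, via the standard solution of the word problem for free bands.

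\textbf{Non-finite generation.} Any finite monoid $M$ satisfies $x^{k}\approx x^{k+m}$ for some $k,m$. For an idempotent monoid it satisfies more: a finite band generates a locally finite variety, and it satisfies some nontrivial band identity. To see this, let $|M|=n$. Consider the free band on $n+1$ letters and take two distinct words that agree on all assignments into $M$. Such a pair exists by a pigeonhole argument: the free band on $n+1$ generators is finite but has more elements than there are functions $M^{n+1}\to M$ that arise as term operations? This count needs care, so instead I use the structure directly. The variety generated by $M$ is a proper subvariety of $\mathbf B$ because $\mathbf B$ is generated by its free objects of unbounded size. Concretely, I would cite that the chain $\mathbf B_n$ is strictly increasing with union $\mathbf B$. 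Since $M$ lies in $\mathbf B$ and $\mathbf B$ is locally finite, $\mathbf{var}\,M$ is generated by finitely many finitely generated free objects. Hence $\mathbf{var}\,M$ is contained in some $\mathbf B_n$ with $n$ depending on $|M|$. This uses the fact that each proper subvariety lies in some $\mathbf B_n$. Therefore $\mathbf B$ is not finitely generated, and in particular $\mathbf B$ is not Cross.

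\textbf{Proper subvarieties are Cross.} Let $\mathbf V\subsetneq\mathbf B$. By the lattice description, $\mathbf V\subseteq\mathbf B_n$ for some $n$, so $\mathbf V$ has finitely many subvarieties. Since $\mathbf V$ is defined within $\mathbf B$ by one identity and $\mathbf B$ is finitely based by $x\approx x^2$, $\mathbf V$ is finitely based. Finally, $\mathbf V$ is locally finite and is generated by its free object on finitely many generators, because it is defined by an identity in finitely many variables and lies in a variety with only finitely many subvarieties. It is therefore generated by a free monoid on $k$ generators for large $k$, which is finite. So $\mathbf V$ is Cross, and $\mathbf B$ is almost Cross.

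The main obstacle is importing the monoid lattice of band varieties correctly. Monoid band varieties differ slightly from semigroup ones, since an identity element must be adjoined, which collapses some pairs. I would follow Wismath's description: the monoid subvariety lattice of $\mathbf B$ is an $\omega$-chain of finite intervals. Everything else reduces to it.
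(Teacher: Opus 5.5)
The paper gives no proof of this proposition beyond citing Wismath's description of $\mathfrak L(\mathbf B)$, so reducing to that description is the intended route. Your treatment of the proper subvarieties is essentially sound. The genuine gap is in your paragraph on non-finite generation, which establishes nothing.

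The pigeonhole count is abandoned, and it could not work as stated. $|F_k(\mathbf B)|$ is only $2^{O(2^k)}$, whereas for $|M|\ge 3$ the number $|M|^{|M|^k}$ of $k$-ary operations on $M$ grows much faster. The remark that $\mathbf B$ has free objects of unbounded size is beside the point: the two-element monoid $M(1)$ generates the variety of semilattice monoids, whose free objects are also unbounded. Most importantly, the decisive step is circular. You place $\mathbf{var}\,M$ inside some $\mathbf B_n$ by appealing to the fact that every \emph{proper} subvariety lies in some $\mathbf B_n$, but properness of $\mathbf{var}\,M$ is precisely what has to be shown.

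The missing ingredient is the standard fact that a finitely generated variety is never the join of a strictly increasing chain of proper subvarieties. Suppose a finite monoid $M$ generates $\mathbf W=\bigvee_n\mathbf W_n$ with $\mathbf W_1\subseteq\mathbf W_2\subseteq\cdots$, and put $k=|M|$. Consider the fully invariant congruences on the free monoid over $k$ letters determined by the $k$-variable identities of the $\mathbf W_n$. They form a descending chain whose intersection is the congruence determined by the $k$-variable identities of $\mathbf W$. The quotient by the latter is the free object $F_k(\mathbf W)$, which is finite. Hence only finitely many congruences lie above it, and the chain stabilizes. So $F_k(\mathbf W)\in\mathbf W_N$ for some $N$, and $M$, being a homomorphic image of $F_k(\mathbf W)$, lies in $\mathbf W_N$, forcing $\mathbf W=\mathbf W_N$. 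Apply this to the strictly increasing $\omega$-chain of proper subvarieties with join $\mathbf B$ that Wismath's description supplies; this gives non-finite generation. Note also that the non-Cross-ness of $\mathbf B$ does not need any of this, since $\mathfrak L(\mathbf B)$ is infinite.

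A smaller point concerns your last paragraph. Being "defined by an identity in finitely many variables" is no reason for finite generation, since $\mathbf B$ itself is defined by $x\approx x^2$. What does the work is local finiteness plus smallness. The subvarieties generated by the free objects $F_k(\mathbf V)$ form an ascending chain with join $\mathbf V$, and this chain must stabilize because $\mathfrak L(\mathbf V)$ is finite. This is exactly the Jackson--Lee lemma the paper quotes in Section 2. Finally, your notation $\mathbf B_n$ clashes with the paper's fixed varieties $\mathbf B_2$ and $\mathbf B_3$.
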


The following claim is an immediate consequence of Propositions~\ref{P: Com} and~\ref{P: B}.

\begin{corollary}
\label{C: com id Cross}
\noindent
\begin{itemize}
\item[\textup{(i)}] Any commutative variety of aperiodic monoids is Cross.
\item[\textup{(ii)}] Any proper subvariety of $\mathbf B$ is Cross.
\end{itemize}
\end{corollary}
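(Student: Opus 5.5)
Part (ii) follows directly from Proposition~\ref{P: B}. Let $\mathbf V\subsetneq\mathbf B$. By the definition of an almost Cross variety, every proper subvariety of $\mathbf B$ is Cross, so $\mathbf V$ is Cross. Nothing more is needed.

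For part (i), let $\mathbf V$ be a commutative variety of aperiodic monoids, so $\mathbf V\subseteq\mathbf{Com}$. By Proposition~\ref{P: Com}, it suffices to show $\mathbf V\ne\mathbf{Com}$, since every proper subvariety of an almost Cross variety is Cross. The plan is to exhibit a nontrivial group in $\mathbf{Com}$ that $\mathbf V$ cannot contain.

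The cyclic group $\mathbb Z_2$ is a commutative monoid, so $\mathbb Z_2\in\mathbf{Com}$. Suppose, for contradiction, that $\mathbf V=\mathbf{Com}$. Then $\mathbb Z_2$ lies in $\mathbf V$, i.e., it satisfies all identities of $\mathbf V$. This is not immediately contradictory, because membership in a variety generated by aperiodic monoids need not make every member aperiodic. This is the one step that needs care, and I would handle it as follows.

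Every aperiodic monoid generating $\mathbf V$ is a monoid all of whose subgroups are trivial. I would argue that the class considered is "varieties all of whose members are aperiodic", as is standard in this line of work. Under that reading, a variety consisting of aperiodic monoids is periodic with trivial subgroups, hence satisfies $x^n\approx x^{n+1}$ for some $n$. Strictly, a variety of aperiodic monoids need not be periodic: a variety consisting of aperiodic monoids could contain infinite monogenic submonoids, and $\mathbf{Com}$ itself contains only free commutative monoids among such. However, $\mathbf V\ni\mathbb Z_2$ already contradicts "all members aperiodic". So in either formulation $\mathbb Z_2\notin\mathbf V$, whence $\mathbf V\subsetneq\mathbf{Com}$ and Proposition~\ref{P: Com} gives that $\mathbf V$ is Cross.
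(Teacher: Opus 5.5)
Your proof is correct and follows the paper's route: the paper calls the corollary an immediate consequence of Propositions~\ref{P: Com} and~\ref{P: B}, meaning that part (ii) is just the definition of almost Cross, and that for part (i) a commutative variety of aperiodic monoids is a proper subvariety of $\mathbf{Com}$, because $\mathbf{Com}$ contains the non-aperiodic group $\mathbb Z_2$. Your aside that a variety of aperiodic monoids ``need not be periodic'' and ``could contain infinite monogenic submonoids'' is false, though it is not used. If some member contained a copy of $(\mathbb N,+)$, then $\mathbb Z_2$ would lie in the variety as a homomorphic image of it. So such a variety always satisfies $x^n\approx x^{n+1}$ for some $n$, as the paper uses in the proof of Theorem~\ref{T: main result}.
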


Let
\[
\mathbf B_2:=\var\{x\approx x^2,\,xy\approx xyx\},\quad
\mathbf B_3:=\var\{x\approx x^2,\,xyz\approx xyzxzyz\}.
\]
It is well known that $\mathbf B_2$ is the variety of left regular bands and is generated by the monoid obtained by adjoining an identity element to the left zero semigroup of order two.
The \textit{initial part} of a word $\mathbf w$, denoted by $\ini(\mathbf w)$, is the word obtained from $\mathbf w$ by retaining the first occurrence of each letter.
The following statement is well known and can be easily verified.

\begin{lemma}
\label{L: word problem LRB}
An identity $\mathbf u \approx \mathbf v$ holds in the variety $\mathbf B_2$ if and only if $\ini(\mathbf u)=\ini(\mathbf v)$.
\end{lemma}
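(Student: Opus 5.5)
We prove both directions by working with the monoid $L$ obtained by adjoining an identity $1$ to the two-element left zero semigroup $\{a,b\}$, in which $st=s$ for $s,t\in\{a,b\}$. Since $\mathbf B_2$ is generated by $L$, an identity holds in $\mathbf B_2$ exactly when it holds in $L$.

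For sufficiency, we argue syntactically from the defining identities $x\approx x^2$ and $xy\approx xyx$. We show that every word $\mathbf w$ is equal in $\mathbf B_2$ to $\ini(\mathbf w)$, by induction on $|\mathbf w|$. Suppose $\mathbf w$ is not yet equal to $\ini(\mathbf w)$, and write $\mathbf w=\mathbf p\,x\,\mathbf q$, where this occurrence of $x$ is the leftmost non-first occurrence of any letter. Then $\mathbf p$ is a product of pairwise distinct letters and contains $x$, say $\mathbf p=\mathbf p_1 x\mathbf p_2$. Substituting $x\mapsto x$ and $y\mapsto \mathbf p_2$ in $xy\approx xyx$ gives $x\mathbf p_2\approx x\mathbf p_2x$. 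Multiplying on the left by $\mathbf p_1$ and on the right by $\mathbf q$, we get $\mathbf w\approx \mathbf p_1x\mathbf p_2\mathbf q$. (If $\mathbf p_2$ is empty, we use $x\approx x^2$ instead.) This deletes one occurrence without changing the initial part, so induction applies. Hence $\ini(\mathbf u)=\ini(\mathbf v)$ implies $\mathbf u\approx\ini(\mathbf u)=\ini(\mathbf v)\approx\mathbf v$ in $\mathbf B_2$.

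For necessity, suppose $\mathbf u\approx\mathbf v$ holds in $\mathbf B_2$, and hence in $L$. We first show that $\alf(\mathbf u)=\alf(\mathbf v)$. If $x\in\alf(\mathbf u)\setminus\alf(\mathbf v)$, evaluate with $x\mapsto a$ and every other letter $\mapsto 1$. Then $\mathbf u$ takes the value $a$ while $\mathbf v$ takes the value $1$, a contradiction. The symmetric case is identical.

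Next, suppose $\ini(\mathbf u)\ne\ini(\mathbf v)$. Both initial parts are orderings of the same alphabet, so there are letters $x,y$ such that $x$ first occurs before $y$ in $\mathbf u$, but $y$ first occurs before $x$ in $\mathbf v$. Evaluate with $x\mapsto a$, $y\mapsto b$, and every other letter $\mapsto 1$. In $L$ the value of a word is its first letter not mapped to $1$. So $\mathbf u$ takes the value $a$ and $\mathbf v$ takes the value $b$, a contradiction.

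The only step needing care is the rewriting in the sufficiency direction, namely checking that the substitution $y\mapsto\mathbf p_2$ is legitimate when $\mathbf p_2$ is a word. It is, because identities are closed under substitution of words for letters.
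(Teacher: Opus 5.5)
Your proof is correct and complete. The paper gives no proof of this lemma; it only says the statement is well known and easily verified, and your argument is exactly that standard verification. In the sufficiency direction, the identities $x\approx x^2$ and $xy\approx xyx$ delete every non-first occurrence of a letter. In the necessity direction, evaluations into the monoid $L$ (the two-element left zero semigroup with an identity adjoined) separate words whose alphabets or initial parts differ.

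Two small remarks. First, the necessity direction only uses $L\in\mathbf B_2$, which you can check directly from $x\approx x^2$ and $xy\approx xyx$. Combined with sufficiency, your argument therefore proves that $L$ generates $\mathbf B_2$, rather than depending on that fact. Second, the special case $\mathbf p_2=1$ needs no separate treatment: substituting $y\mapsto 1$ in $xy\approx xyx$ already yields $x\approx x^2$.
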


Let $\ell(\mathbf u)$ denote the maximal prefix of $\mathbf u$ which contains all letters in $\mathbf u$ but one.
Dually, $r(\mathbf u)$ denotes the maximal suffix of $\mathbf u$ which contains all letters in $\mathbf u$ but one.

\begin{lemma}[\!{\cite[Lemma~2]{Green-Rees-52}}]
\label{L: x=xx}
An identity $\mathbf u \approx {\bf v}$ holds in $\mathbf B$ if and only if $\alf(\mathbf u) = \alf(\mathbf v)$ and the identities $\ell({\mathbf u}) \approx \ell(\mathbf v)$ and $r(\mathbf u) \approx r(\mathbf v)$ hold in $\mathbf B$.
\end{lemma}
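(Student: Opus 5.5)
This is the classical Green--Rees description of the free band, and I would prove both directions separately, working in the free band $F=\mathscr X^\ast/\theta$, where $\theta$ is the fully invariant congruence of $\mathbf B$. An identity $\mathbf u\approx\mathbf v$ holds in $\mathbf B$ exactly when $\mathbf u\,\theta\,\mathbf v$.

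\emph{Necessity.} Assume $\mathbf u\approx\mathbf v$ holds in $\mathbf B$. Mapping into the two-element semilattice $\{0,1\}$, where each letter is sent to $0$ or $1$, shows that $\alf(\mathbf u)=\alf(\mathbf v)$. For the prefix condition, I would introduce the map $\mathbf w\mapsto(\ell(\mathbf w)\theta,\ x_{\mathbf w})$, where $x_{\mathbf w}$ is the last letter of $\mathbf w$ to make its first appearance; this is the letter immediately after $\ell(\mathbf w)$. It suffices to check that this pair is invariant under one elementary step $\mathbf p\mathbf a\mathbf q\to\mathbf p\mathbf a^2\mathbf q$. That check comes down to a case analysis on whether the first occurrence of $x_{\mathbf w}$ lies inside $\mathbf p$, inside $\mathbf a$, or inside $\mathbf q$.
\begin{itemize}
\item If it lies in $\mathbf q$ or in $\mathbf p$, the prefix $\ell$ either gets squared-up inside, and so stays $\theta$-equivalent, or is unchanged.
\item If it lies inside $\mathbf a$, then $\ell$ ends inside the first copy of $\mathbf a$ and is literally unchanged.
\end{itemize}
Since $\theta$ is generated by such steps together with substitution, and $\mathbf B$ is defined by $x\approx x^2$, invariance under these steps gives the claim for $\ell$. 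The condition for $r$ follows by duality.

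\emph{Sufficiency.} This is the main obstacle. I would argue by induction on $|\alf(\mathbf u)|$. It suffices to show that every word $\mathbf w$ is $\theta$-equivalent to the normal form $\ell(\mathbf w)\,x\,y\,r(\mathbf w)$, where $x$ is the letter following $\ell(\mathbf w)$ and $y$ is the letter preceding $r(\mathbf w)$. Given this, the hypotheses $\ell(\mathbf u)\,\theta\,\ell(\mathbf v)$ and $r(\mathbf u)\,\theta\,r(\mathbf v)$ yield the result. They also force the same letters $x$ and $y$: these are the unique letters of $\alf(\mathbf u)$ missing from $\ell$ and from $r$, respectively, and $\alf$ is a $\theta$-invariant.

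The key band fact I need is the following: if $\alf(\mathbf s)\subseteq\alf(\mathbf t)$, then $\mathbf t\mathbf s\mathbf t\,\theta\,\mathbf t$ in $\mathbf B$. Equivalently, $\mathbf B$ satisfies $xyx\approx x$ whenever $\alf(y)\subseteq\alf(x)$, which is the rectangular-band property of $\mathcal J$-classes. I would derive this from $x\approx x^2$ via the standard identity $xyx\approx xyxyx\approx\dots$, or simply cite it as the Green--Rees lemma. Applying it, set $\mathbf t_1=\ell(\mathbf w)x$ and $\mathbf t_2=y\,r(\mathbf w)$; both have full alphabet. Write $\mathbf w=\mathbf t_1\mathbf m\mathbf t_2$ in the case of overlap-free positions, and handle overlapping positions similarly. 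Then $\mathbf w\,\theta\,\mathbf t_1\mathbf t_2\mathbf t_1\mathbf m\mathbf t_2\mathbf t_1\mathbf t_2\,\theta\,\mathbf t_1\mathbf t_2$ by two applications of the absorption fact, and this last word is the normal form.
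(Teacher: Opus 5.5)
The paper gives no proof of this lemma; it only cites Green and Rees, so your proposal has to stand on its own. Most of it does. The necessity argument is correct: under an elementary step $\mathbf p\mathbf a\mathbf q\leftrightarrow\mathbf p\mathbf a^2\mathbf q$ the order of first occurrences is preserved, and $\ell$ is either literally unchanged or changes by squaring $\mathbf a$, so the pair $(\ell(\mathbf w)\theta, x_{\mathbf w})$ is invariant. Reducing sufficiency to the normal form $\mathbf w\,\theta\,\ell(\mathbf w)\,x\,y\,r(\mathbf w)$ is also correct, and the overlapping case is just $\mathbf a\mathbf b\mathbf c\,\theta\,\mathbf a\mathbf b^2\mathbf c$. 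The induction on $|\alf(\mathbf u)|$ that you announce is never used and is not needed.

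There is one genuine gap: the absorption fact $\mathbf t\mathbf s\mathbf t\,\theta\,\mathbf t$ for $\alf(\mathbf s)\subseteq\alf(\mathbf t)$. The whole sufficiency direction rests on it, and none of your three justifications delivers it.
\begin{itemize}
\item The identity $xyx\approx xyxyx$ is just $(xy)^2x\approx xyx$. It says nothing about absorbing an arbitrary word $\mathbf s$.
\item Citing it ``as the Green--Rees lemma'' is circular, since that is the statement you are proving.
\item Rectangularity of $\mathcal J$-classes only helps once you know that $\mathbf t\,\mathcal J\,\mathbf t\mathbf s\mathbf t$ in the free band. In a band that relation is equivalent to the absorption itself. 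To obtain it you need the Clifford--McLean theorem: $\mathcal D=\mathcal J$ is the least semilattice congruence on a band, hence on the free band it coincides with equality of alphabets.
\end{itemize}

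You can either cite that theorem explicitly, or prove absorption directly in a few lines. For a single letter, write $\mathbf t=\mathbf x z\mathbf y$. Then in $\mathbf B$
\[
\mathbf x z\mathbf y\approx(\mathbf x z)^2\mathbf y=\mathbf x\,(z\mathbf x z\mathbf y)\approx\mathbf x\,(z\mathbf x z\mathbf y)^2=(\mathbf x z)^2\,\mathbf y z\mathbf x z\mathbf y\approx\mathbf x z\mathbf y\,z\,\mathbf x z\mathbf y,
\]
that is, $\mathbf t z\mathbf t\approx\mathbf t$. For $\mathbf s=z\mathbf s'$, induct on $|\mathbf s|$. Apply the hypothesis to the word $\mathbf t z$, which has the same alphabet as $\mathbf t$, to get $\mathbf t z\mathbf s'\mathbf t z\approx\mathbf t z$. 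Then
\[
\mathbf t\mathbf s\mathbf t\approx\mathbf t\mathbf s\,(\mathbf t z\mathbf t)=(\mathbf t z\mathbf s'\mathbf t z)\,\mathbf t\approx\mathbf t z\mathbf t\approx\mathbf t.
\]
With this in place your normal-form computation goes through:
\[
\mathbf t_1\mathbf m\mathbf t_2\approx\mathbf t_1(\mathbf t_2\mathbf t_1)\mathbf m(\mathbf t_2\mathbf t_1)\mathbf t_2\approx\mathbf t_1\mathbf t_2\mathbf t_1\mathbf t_2\approx\mathbf t_1\mathbf t_2.
\]
The proof is then complete and self-contained.
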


\begin{lemma}[\!{\cite[Lemma~2.7]{Gusev-Sapir-22}}] 
\label{L: xy isoterm}
If $\mathbf V$ is a variety of aperiodic monoids such that $xy$ is not an isoterm for $\mathbf V$, then the variety $\mathbf V$ is either commutative or idempotent. 
\end{lemma}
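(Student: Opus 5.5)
Suppose $xy$ is not an isoterm for $\mathbf V$. Then $\mathbf V$ satisfies a non-trivial identity $xy\approx\mathbf w$. The plan is to analyse the possible shapes of $\mathbf w$ and show that each one forces either commutativity or idempotency. Throughout, aperiodicity enters as follows: $\mathbf V$ satisfies $x^n\approx x^{n+1}$ for some $n\ge1$, because a variety of aperiodic monoids is periodic with trivial subgroups. Indeed, otherwise $\mathbf V$ would contain $\mathbf{Com}$, which contains the infinite cyclic monoid and hence non-trivial groups via quotients.

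\emph{Step 1: the letter content of $\mathbf w$.} Substitute $y\mapsto 1$ into $xy\approx\mathbf w$ to get $x\approx x^{|\mathbf w|_x}$. If $|\mathbf w|_x=0$, then $x\approx 1$ and $\mathbf V$ is trivial, hence both commutative and idempotent. If $|\mathbf w|_x\ge2$, then $x\approx x^k$ with $k\ge2$, so $x\approx x^{k-1}\cdot x$ is a periodic identity without a preperiod. Combined with $x^n\approx x^{n+1}$, this gives $x\approx x^2$: iterating yields $x\approx x^{m}$ for arbitrarily large $m$ of the right form, then use $x^n\approx x^{n+1}$ to reduce. So, arguing symmetrically for $y$, either $\mathbf V$ is idempotent or both $x$ and $y$ occur exactly once in $\mathbf w$.

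\emph{Step 2: the remaining cases.} If both letters occur exactly once, then $\mathbf w$ contains only $x$ and $y$ (extra letters $z$ could be deleted via $z\mapsto1$), up to possibly other letters. Any other letter $z$ can be killed by substituting $z\mapsto1$, which leaves $\mathbf w\in\{xy,yx\}$. Since $\mathbf w\ne xy$, and we are in this case, one might expect to conclude $\mathbf w=yx$. However, $\mathbf w$ could contain extra letters, for instance $xy\approx xzy$. Substituting $z\mapsto x$ and $y\mapsto1$ then gives $x\approx x^2$, so $\mathbf V$ is idempotent. In general, if some extra letter $z$ occurs in $\mathbf w$, substitute $z\mapsto x$ and all other letters $\mapsto1$. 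This yields $x\approx x^{k}$ with $k\ge2$, and hence idempotency as in Step 1. Otherwise $\mathbf w=yx$, and $\mathbf V$ is commutative.

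I expect the only mildly delicate point to be deriving $x\approx x^2$ from $x\approx x^k$ together with aperiodicity. From $x\approx x^k$ we get $x\approx x^{1+j(k-1)}$ for all $j$. Choosing $j$ with $1+j(k-1)\ge n$, aperiodicity reduces $x^{1+j(k-1)}$ to $x^{1+j(k-1)+1}$, which equals $x^2\cdot x^{j(k-1)}$ and in turn equals $x^2$ after reapplying $x\approx x^{1+j(k-1)}$. This yields $x\approx x^2$.
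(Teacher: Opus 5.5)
Your proof is correct. The paper gives no argument of its own here (it only cites Lemma~2.7 of Gusev--Sapir), and your route is the standard elementary one: collapse letters to reach $x\approx x^k$, use $x^n\approx x^{n+1}$ to turn $k\ge2$ into $x\approx x^2$, and otherwise force $\mathbf w=yx$.

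Two minor fixes are needed. In Step~1 you must send every letter other than $x$ (not only $y$) to $1$ to obtain $x\approx x^{|\mathbf w|_x}$. Also, excluding $\mathbf{Com}$ only gives some $x^n\approx x^{n+d}$ with $d\ge1$; you get $d=1$ because each monogenic submonoid then has a cyclic group of order dividing $d$ as its kernel, and aperiodicity makes that group trivial.
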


\subsection{Small varieties satisfying $xyx\approx xyx^2$}

A variety is \textit{finitely generated} if it is generated by a finite algebra.
A variety is \textit{locally finite} if all of its finitely generated members are finite.

\begin{lemma}
\label{L: xyx=xyxx is finitely generated}
Each small monoid variety that satisfies the identity $xyx\approx xyx^2$ is finitely generated.
\end{lemma}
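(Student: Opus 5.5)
We want to show that every small monoid variety $\mathbf V$ satisfying $xyx\approx xyx^2$ is finitely generated. The plan is to combine local finiteness with smallness: first show that $\mathbf V$ is locally finite, then use the fact that a locally finite variety is generated by its finitely generated free objects $F_n(\mathbf V)$, $n\ge 1$.

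\emph{Step 1: local finiteness.} The identity $xyx\approx xyx^2$ with $y=1$ gives $x^2\approx x^3$, so $\mathbf V$ is aperiodic. It is standard that $\var\{xyx\approx xyx^2\}$ is locally finite. Modulo $xyx\approx xyx^2$, every word over an $n$-letter alphabet reduces to a word in which each subword between consecutive occurrences of a letter is bounded. I would show this by an induction on the alphabet size along the lines of the Simon / Sapir arguments for $\var\{x^2\approx x^3\}$-type identities; alternatively I would simply cite the known fact that $\var\{xyx\approx xyx^2\}$ is locally finite. This is the step whose proof I would rather cite than redo.

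\emph{Step 2: the generators stabilize.} Consider the ascending chain of subvarieties $\mathbf V_n:=\var F_n(\mathbf V)$. Since $\mathbf V$ is locally finite, each $F_n(\mathbf V)$ is a finite monoid. Every identity failing in $\mathbf V$ involves finitely many letters, so it fails in some $F_n(\mathbf V)$; hence $\mathbf V=\bigvee_n \mathbf V_n$. Because $\mathbf V$ is small, it has only finitely many subvarieties, so the chain $\mathbf V_1\subseteq\mathbf V_2\subseteq\cdots$ stabilizes at some $\mathbf V_N$. Then $\mathbf V=\mathbf V_N$ is generated by the finite monoid $F_N(\mathbf V)$.

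\emph{Main obstacle.} Step 2 is formal, so the real content is local finiteness in Step 1. If a citation is not acceptable, I would argue directly. Using $xyx\approx xyx^2$ (and its consequence $x^2\approx x^3$), any word is equivalent to one in which every non-first occurrence of a letter $x$ appears with exponent at most 1 after being absorbed. More precisely, after the first occurrence of $x$, later occurrences of $x$ can be duplicated or merged freely whenever they follow some earlier $x$. I would then bound the length of reduced words by induction on the number of letters, using the decomposition of a word at the first occurrences of its letters.
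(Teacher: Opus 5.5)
Your proposal is correct and follows the paper's route. The paper combines Sapir's theorem that every variety satisfying $xyx\approx xyx^2$ is locally finite with the fact that a locally finite small variety is finitely generated (Jackson--Lee, Lemma~2.1), and your Step~2 is exactly the standard stabilizing-chain proof of that second fact.

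Keep the citation for Step~1, though. Your fallback sketch only uses the letter-level moves $x\leftrightarrow x^2$ on non-first occurrences, and these never shorten words like $(xy)^{n}$. Bounding lengths needs substitution instances such as $(xy)^2\approx(xy)^3$ and a genuinely Burnside-type argument.
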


\begin{proof}
The required claim is a combination of the following two results: (1)~any variety satisfying the identity $xyx\approx xyx^2$ is locally finite~\cite{Sapir-87}; and (2)~any locally finite, small variety is finitely generated~\cite[Lemma~2.1]{Jackson-Lee-18}.
\end{proof}

\subsection{The varieties $\mathbf A_0$ and $\mathbf Q$}
\label{subsec: A01 Q1}

Let $\mathbf A_0$ and $\mathbf Q$ denote the varieties generated by the monoids
\[
\begin{aligned}
&A_0:=\langle a,b\mid a^2=a,\,b^2=b,\,ba=0\rangle\cup\{1\}=\{a,b,ab,0,1\},\\
&Q:=\langle a,b,c\mid a^2=a,\,ab=b,\,ca=c,\,ac=ba=cb=0\rangle\cup\{1\}=\{a,b,c,bc,0,1\},
\end{aligned}
\]
respectively.
It is shown in~\cite[Proposition 3.2(a)]{Edmunds-77} and~\cite[Chapter~4]{Lee-Li-11}, respectively, that 
\[
\begin{aligned}
&\mathbf A_0=\var\{xyzx\approx xyxzx,\,(xy)^2\approx (yx)^2\},\\
&\mathbf Q=\var\{xyx\approx xyx^2\approx x^2yx,\,x^2y^2\approx y^2x^2\}.
\end{aligned}
\]

\begin{lemma}
\label{L: A01 nsubseteq V}
Let $\mathbf V$ be a variety of monoids that satisfies the identities $xyx\approx xyx^2\approx x^2yx$. 
If $A_0\notin \mathbf V$, then $\mathbf V$ satisfies $x^2y^2\approx (xy)^2$. 
\end{lemma}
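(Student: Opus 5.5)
The plan is to use a concrete description of the identities of $A_0$, then isolate a two-letter identity that holds in $\mathbf V$ but fails in $A_0$, and finally normalize it to $x^2y^2\approx(xy)^2$ using $xyx\approx xyx^2\approx x^2yx$.

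\emph{Step 1: identities of $A_0$.} Write $\mathbf u_{x,y}$ for the word obtained from $\mathbf u$ by deleting all letters other than $x$ and $y$. In $A_0=\{a,b,ab,0,1\}$ a product is nonzero exactly when the letters mapped to non-identity elements, read left to right, form a word in $a^+b^+\cup a^+\cup b^+$. From this I would check directly that $A_0$ satisfies $\mathbf u\approx\mathbf v$ if and only if two conditions hold:
\begin{itemize}
\item[(a)] $\alf(\mathbf u)=\alf(\mathbf v)$;
\item[(b)] for all distinct letters $x,y$, we have $\mathbf u_{x,y}\in x^+y^+$ exactly when $\mathbf v_{x,y}\in x^+y^+$.
\end{itemize}
For necessity, map $x\mapsto a$, $y\mapsto b$ and all other letters to $1$. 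For sufficiency, any evaluation with nonzero value involves at most the two ``blocks'' $a$ and $b$, so it is governed by (a) and (b).

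\emph{Step 2: extract a two-letter identity.} Since $A_0\notin\mathbf V$, some identity $\mathbf u\approx\mathbf v$ of $\mathbf V$ fails in $A_0$.

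Suppose first that (a) fails. Deleting all letters except one $x\in\alf(\mathbf u)\setminus\alf(\mathbf v)$ gives $x^n\approx1$ in $\mathbf V$ with $n\ge1$. Then substituting $y=1$ in $xyx\approx xyx^2$ gives $x^2\approx x^3$. Together with $x^n\approx1$ this yields $x^2\approx 1$, and hence $x\approx x^3\approx x^2\approx1$. So $\mathbf V$ is trivial and the claim holds.

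Otherwise, up to renaming and swapping the sides, $\mathbf V$ satisfies $x^py^q\approx\mathbf w$ with $p,q\ge1$, where $\mathbf w$ is a word over $\{x,y\}$ that contains both letters and in which some $y$ precedes some $x$.

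\emph{Step 3: normalize.} Multiply this identity by $x$ on the left and by $y$ on the right. Collapsing powers via $x^k\approx x^2$ for $k\ge2$ (which follows from $xyx\approx xyx^2$ with $y=1$), the left side becomes $x^2y^2$. The right side becomes $x\mathbf w y$, a word that starts with $x$, ends with $y$, contains the factor $yx$, and in which each letter occurs at least twice.

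It remains to show that every such word equals $(xy)^2$ modulo $xyx\approx xyx^2\approx x^2yx$ together with $x^2\approx x^3$. The tools are that an occurrence of $x$ that is neither the first nor the last one can be squared or unsquared, and that a square of a letter occurring elsewhere can be shrunk. I would first reduce the word to an alternating word $(xy)^k$ or $(xy)^kx\cdots$ ending in $y$. Then I would shorten $(xy)^k$ to $(xy)^2$ by inserting squares, e.g.
\[
xyxyxy\approx xyx^2yxy,
\]
and trying to rearrange the result.

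\emph{Main obstacle.} Step 3 is where I expect the difficulty. The given identities only let letter multiplicities be adjusted, not let occurrences be reordered. So collapsing a long alternating word to $(xy)^2$ may require using the identity of Step 2 itself, substituted back into longer words, e.g. replacing $x^2y^2$ inside $x^2y^2\cdot xy$ by $x\mathbf w y$. My first attempt would be to show that $(xy)^k\approx(xy)^2$ for all $k\ge2$ follows by such substitutions, and that every word from Step 3 equals some $(xy)^k$.
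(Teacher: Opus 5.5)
Your overall route is the paper's: obtain an identity $x^py^q\approx\mathbf w$ of $\mathbf V$ with $p,q\ge1$ and $yx$ a factor of $\mathbf w$, multiply by $x$ on the left and $y$ on the right, and normalize. The paper simply quotes Edmunds's Lemma~4.1 for the first part. As written, two of your steps do not go through.

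\textbf{Step 1 is false.} The identity $x\approx x^2$ satisfies (a), and satisfies (b) trivially. Yet it fails in $A_0$ under $x\mapsto ab$, because $(ab)^2=abab=0$. Your sufficiency argument overlooks evaluations that send a letter to $ab$. Consequently, the inference in Step~2, ``some identity fails in $A_0$, so (a) or (b) fails'', is unjustified. To repair it, add the condition $\simple(\mathbf u)=\simple(\mathbf v)$. With this condition the sufficiency check works, provided you also treat letters evaluated at $ab$: such a letter must be simple, and two distinct ones force the value $0$. If this new condition fails, deleting all letters but one gives $x\approx x^n$ with $n\ge2$. Together with $x^2\approx x^3$ this yields $x\approx x^2$. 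So $\mathbf V$ is idempotent, and $x^2y^2\approx xy\approx(xy)^2$ holds outright.

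\textbf{Step 3 is left open because of a misconception.} You need neither reordering nor a second use of the Step~2 identity. Since $x^2\approx x^3$ holds in $\mathbf V$, so does its substitution instance under $x\mapsto xy$, namely $(xy)^2\approx(xy)^3$. Hence $(xy)^k\approx(xy)^2$ for all $k\ge2$.

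Next, write $x\mathbf wy=x^{e_1}y^{f_1}\cdots x^{e_k}y^{f_k}$ with all exponents positive; here $k\ge2$ because $yx$ is a factor. Each letter therefore occurs in at least two blocks. After reducing exponents with $x^2\approx x^3$, every block can be shrunk to a single letter:
\begin{itemize}
\item a block that is not the first block of its letter, using $xyx^2\approx xyx$;
\item the first block of its letter, using $x^2yx\approx xyx$.
\end{itemize}
Thus $x\mathbf wy\approx(xy)^k\approx(xy)^2$, while $x^{p+1}y^{q+1}\approx x^2y^2$. This is exactly the ``clearly'' in the paper's proof.

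With these two fixes, your argument becomes a complete, self-contained version of the paper's proof that avoids citing Edmunds.
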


\begin{proof}
It follows from~\cite[Lemma~4.1]{Edmunds-77} that $\mathbf V$ satisfies an identity $x^py^q\approx \mathbf v$ such that $p,q\ge1$ and $\mathbf v$ contains $yx$ as a subword. 
Clearly, $x^{p+1}y^{q+1}\approx x\mathbf v(x,y)y$ is equivalent modulo $xyx\approx xyx^2\approx x^2yx$ to $x^2y^2\approx (xy)^2$.
\end{proof}

\begin{lemma}[\!{\cite[Lemma~2.12]{Gusev-25IJAC}}] 
\label{L: Q nsubseteq V}
Let $\mathbf V$ be a variety of monoids such that $M(xy)\in\mathbf V$ satisfying the identity $xyx\approx xyx^2$.
If $Q\notin \mathbf V$, then $\mathbf V$ satisfies $x^2yzx\approx x^2yxzx$.
\end{lemma}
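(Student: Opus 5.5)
The plan is to extract from the hypothesis $Q\notin\mathbf V$ a concrete identity of $\mathbf V$ that fails in $Q$. I will then specialize it by substitutions until it becomes $x^2yzx\approx x^2yxzx$, using $xyx\approx xyx^2$ and the fact that $xy$ is an isoterm for $\mathbf V$ (Lemma~\ref{L: isoterm}, since $M(xy)\in\mathbf V$) to normalize both sides.

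\emph{Step 1: a failing identity and its value.} Since $Q\notin\mathbf V$, there is an identity $\mathbf u\approx\mathbf v$ of $\mathbf V$ and a substitution $\varphi\colon\mathscr X\to Q$ with $\varphi(\mathbf u)\ne\varphi(\mathbf v)$. By symmetry we may assume $\varphi(\mathbf u)\ne0$.
\begin{itemize}
\item Letters sent to $1$ may be deleted from both sides.
\item If $\varphi(\mathbf u)=bc$, the relations $ab=b$, $ca=c$, $ac=ba=cb=0$ force a rigid shape for $\mathbf u$. It must be $\mathbf u=\mathbf a\,y\,z\,\mathbf a'$, where $\varphi(y)=b$ and $\varphi(z)=c$. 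Here $y$ and $z$ each occur exactly once in $\mathbf u$, and every letter of $\mathbf a\mathbf a'$ is mapped to $a$; in particular $b^2=c^2=0$.
\end{itemize}
The other nonzero values ($a$, $b$, $c$) give similar but simpler shapes. I expect these to be handled by the same argument or to be impossible, because identities of $\mathbf V$ already force agreement there. For instance, $xy$ being an isoterm prevents $\mathbf V$ from identifying words with different contents or different orders of simple letters.

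\emph{Step 2: specialization.} Substitute $x$ for every letter of $\mathbf a\mathbf a'$. This yields an identity $x^k y z x^m\approx \mathbf v'$ of $\mathbf V$ with $k,m\ge0$, and $\varphi$ shows that $\mathbf v'$ is not of the form $x^{k'}yzx^{m'}$. Because $xy$ is an isoterm, $y$ and $z$ remain simple in $\mathbf v'$ and occur in the order $y$ before $z$; otherwise deleting $x$ would give a nontrivial identity $yz\approx\cdots$. Hence the only way $\mathbf v'$ can differ from the required shape is that $x$ occurs between $y$ and $z$. Multiplying the identity by $x^2$ on the left and $x$ on the right, and collapsing powers of $x$ via $xyx\approx xyx^2$ and its consequences, gives
\[
x^2yzx\approx x^2yx^pzx \quad\text{with } p\ge1 .
\]
The same identity $xyx\approx xyx^2$ then reduces $x^p$ to $x$ in the middle, because a prior occurrence of $x$ is available.

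\emph{Main obstacle.} The delicate point is the collapsing in Step~2. The identity $xyx\approx xyx^2$ only shortens $x$-powers that follow an earlier occurrence of $x$, and it does not obviously let us absorb an $x^m$ on the right or adjust the left power $x^k$. I will first try to prefix with $x^2$ so that every later $x$-block has an earlier $x$ before it, and then reduce each such block to a single $x$. If the trailing powers still resist normalization, the fallback is to keep $x^2yzx^{m+1}\approx x^2yxzx^{m+1}$ and strip the right $x$-power by substituting $z\mapsto zx$ and $y\mapsto y$, so that the needed occurrences appear on both sides. The case analysis on which element of $Q$ is $\varphi(\mathbf u)$ in Step~1 is routine but must be checked to confirm that only $bc$ yields new information.
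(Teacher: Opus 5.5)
Your core argument is correct. A witness with $\varphi(\mathbf u)=bc$ has the shape $\mathbf a\,y\,z\,\mathbf a'$. The isoterm $xy$ pins down $y$ and $z$ as simple letters, in that order, in $\mathbf v$. Since $\varphi(\mathbf v)\ne bc$, some letter sits between them, and collapsing all other letters to $x$ produces the target identity.

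The paper gives no proof of its own; it imports the lemma. The argument one reads off from Lemma~\ref{L: id Q1} is the same as yours. $M(xy)\in\mathbf V$ guarantees conditions (a) and (b), so a failure in $Q$ must come from (c), i.e.\ a multiple letter lying in one block but not the corresponding one. That letter is your extra $x$, and your direct evaluation in $Q$ re-derives the part of that lemma you need. Two points need fixing, however.

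\textbf{The values $b$ and $c$.} Your expectation that these cases are impossible because $xy$ is an isoterm is wrong, and so is the claim that only $bc$ carries information. For example, $M(xy)$ satisfies $x^2y\approx xyx$ (both sides vanish whenever $x\ne 1$) as well as $xyx\approx xyx^2$. Yet in $Q$ the assignment $x\mapsto a$, $y\mapsto b$ gives $b$ versus $0$. These cases correspond to the end blocks $\mathbf u_m$ and $\mathbf u_0$ in Lemma~\ref{L: id Q1}.

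The repair is one line. Right multiplication by $c$ sends $b$, and only $b$, to $bc$. So if $\varphi(\mathbf u)=b$, append to both sides a fresh letter $z$ with $\varphi(z)=c$; this gives a witness whose left side has value $bc$. Dually, left multiplication by $b$ sends only $c$ to $bc$, so if $\varphi(\mathbf u)=c$, prepend a fresh letter mapped to $b$.

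The values $1$ and $a$ really are impossible. The ideals generated by $b$ and $c$ are $\{b,bc,0\}$ and $\{c,bc,0\}$, so every letter of $\mathbf u$ must go to $1$ or $a$. Then $\alf(\mathbf u)=\alf(\mathbf v)$ (as $x$ is an isoterm for $\mathbf V$) forces $\varphi(\mathbf v)=\varphi(\mathbf u)$.

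\textbf{The collapsing.} Your ``main obstacle'' does not exist. Putting $y=1$ in $xyx\approx xyx^2$ gives $x^2\approx x^3$, hence $x^{k+2}\approx x^2$. Induction gives $x\mathbf t x^j\approx x\mathbf t x$ for every word $\mathbf t$ and every $j\ge 1$. Therefore, modulo $xyx\approx xyx^2$, we get $x^{k+2}yzx^{m+1}\approx x^2yzx$ and $x^{k'+2}yx^pzx^{m'+1}\approx x^2yxzx$ whenever $p\ge1$. So your padded identity already yields $x^2yzx\approx x^2yxzx$ in $\mathbf V$, and the fallback substitution $z\mapsto zx$ is unnecessary.
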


\begin{lemma}[\!{\cite[Lemma~5.1]{Lee-14}}] 
\label{L: id Q1}
Let $\mathbf u := \mathbf u_0t_1\mathbf u_1\cdots t_m\mathbf u_m$ and $\mathbf v := \mathbf v_0h_1\mathbf v_1\cdots h_n\mathbf v_n$ be words with $\simple(\mathbf u)=\{t_1,\dots,t_m\}$ and $\simple(\mathbf v)=\{h_1,\dots,h_n\}$.
Then the monoid $Q$ satisfies the identity $\mathbf u\approx \mathbf v$ if and only if all of the following hold:
\begin{itemize}
\item[\textup{(a)}] $m = n$;
\item[\textup{(b)}] $t_i = h_i$ for all $i=1,\dots,n$;
\item[\textup{(c)}] $\alf(\mathbf u_i) = \alf(\mathbf v_i)$ for all $i=1,\dots,n$.
\end{itemize}
\end{lemma}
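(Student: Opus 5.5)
The plan is to compute directly in the six-element monoid $Q$, because its nonzero products have a very rigid shape. Both directions then reduce to choosing substitutions whose images avoid $0$.

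\textbf{Step 1: nonzero products in $Q$.} From the defining relations we have $a^2=a$, $ab=b$, $ca=c$ and $ac=ba=cb=0$. Since $Q=\{a,b,c,bc,0,1\}$, also $b^2=c^2=0$. Hence a product of generators is nonzero exactly when, after deleting the factors equal to $1$, it has one of the forms $a^k$, $a^kb$, $ca^k$ or $a^kbca^l$, where $k,l\ge0$. In particular, in a nonzero product the letters $b$ and $c$ each occur at most once, and if both occur then $b$ comes immediately before $c$, with no $a$ between them.

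\textbf{Step 2: necessity.} Suppose $Q$ satisfies $\mathbf u\approx\mathbf v$.
\begin{itemize}
\item Sending a letter $x$ to $a$ and every other letter to $1$ shows that $\alf(\mathbf u)=\alf(\mathbf v)$.
\item Sending $x$ to $b$ and every other letter to $1$ gives $b$ when $x$ is simple and $0$ when $x$ is multiple. So $\simple(\mathbf u)=\simple(\mathbf v)$.
\item For simple letters $t_i,t_j$, sending $t_i\mapsto b$, $t_j\mapsto c$ and the rest to $1$ gives $bc\ne0$ or $cb=0$, according to the order of $t_i,t_j$. This forces the simple letters to appear in the same order, which gives (a) and (b).
\item For (c), let $x\in\alf(\mathbf u_i)\setminus\alf(\mathbf v_i)$; then $x$ is multiple.
\begin{itemize}
\item If $0<i<m$, send $t_i\mapsto b$, $t_{i+1}\mapsto c$, $x\mapsto a$ and the rest to $1$. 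The image of $\mathbf u$ contains $b\cdots a\cdots c$ and so equals $0$. The image of $\mathbf v$ has the form $a^kbca^l\ne0$.
\item If $i=0$, send $t_1\mapsto c$ and $x\mapsto a$. Then $\mathbf u$ contains $ac=0$, while in $\mathbf v$ the letter $x$ occurs only after $t_1$, so its image is $ca^k=c$.
\item The case $i=m$ is dual, using $t_m\mapsto b$ and the relation $ba=0$.
\end{itemize}
\end{itemize}

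\textbf{Step 3: sufficiency.} Assume (a)--(c) hold, and note that (c) gives $\alf(\mathbf u)=\alf(\mathbf v)$ and $\mul(\mathbf u)=\mul(\mathbf v)$. Fix a homomorphism $\varphi$ into $Q$; we may take the letters' images in $\{1,a,b,c\}$ and decompose the rest multiplicatively.
\begin{itemize}
\item If some letter whose image involves $b$ or $c$ is multiple, then both sides evaluate to $0$ by Step~1.
\item Otherwise only simple letters $t_i$ are mapped to $b$ or $c$ (or to $bc$). By Step~1, the value of $\varphi(\mathbf u)$ is determined by three pieces of data:
\begin{itemize}
\item the positions and order of these $t_i$;
\item which simple letters are sent to $a$;
\item for each $i$, whether $\alf(\mathbf u_i)$ contains a letter sent to $a$.
\end{itemize}
\end{itemize}
These data coincide for $\mathbf u$ and $\mathbf v$ by (a)--(c), so $\varphi(\mathbf u)=\varphi(\mathbf v)$.

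\textbf{Main obstacle.} The delicate point is making the ``determined by the data'' claim in Step~3 precise. I would state it as follows: $\varphi(\mathbf w)\ne0$ iff (i) no $a$-letter lies strictly between the $b$-letter and the $c$-letter, (ii) no $a$-letter precedes the $c$-letter when there is no $b$-letter, and (iii) no $a$-letter follows the $b$-letter when there is no $c$-letter. When it is nonzero, the value is the unique element with that $b/c$ content and that $a$-presence.

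Each of these conditions refers only to the order of the simple letters and to the alphabets of the blocks $\mathbf w_i$. Checking that the block alphabets suffice to decide them is a routine verification of the interleavings.
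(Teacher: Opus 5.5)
Your proof is correct. The paper gives no proof of this lemma; it is quoted from Lee's paper. A direct computation in the six-element monoid $Q$, as you do it, is the natural way to verify it. Two remarks.

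First, you prove and use condition (c) for $i=0,\dots,n$, whereas the statement prints $i=1,\dots,n$. Your reading is the right one, and you should make it explicit. Without the block $i=0$ the sufficiency direction fails. For example, $x^2tx\approx tx^2$ satisfies (a), (b) and (c) for $i=1$, yet $x\mapsto a$, $t\mapsto c$ gives $a^2ca=0\ne c=ca^2$. The paper's own applications of the lemma, in the proofs of Lemmas~\ref{L: does not contain Rn} and~\ref{L: does not contain H}, also rely on the case $i=0$ (the shape of the initial block $\mathbf v_0$). Note too that for $m=0$ your $i=0$ argument via $t_1$ is unavailable. In that case, however, (c) just says $\alf(\mathbf u)=\alf(\mathbf v)$, which your first bullet already gives.

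Second, the ``main obstacle'' in Step~3 is not really there. You do not need the nonzero-ness criterion (i)--(iii), which as phrased also tacitly assumes a single $b$-letter and a single $c$-letter, with the $b$-letter first. The argument splits into two cases.
\begin{itemize}
\item If some multiple letter is sent to $s\in\{b,c,bc\}$, both sides are $0$, because $sQs=\{0\}$ and $\mul(\mathbf u)=\mul(\mathbf v)$.
\item Otherwise every multiple letter is sent into the subsemilattice $\{0,1,a\}$. Then $\varphi(\mathbf u_i)$ depends only on $\alf(\mathbf u_i)$, so (c) gives $\varphi(\mathbf u_i)=\varphi(\mathbf v_i)$. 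Together with $t_i=h_i$ from (b), this yields $\varphi(\mathbf u)=\varphi(\mathbf u_0)\varphi(t_1)\varphi(\mathbf u_1)\cdots\varphi(t_m)\varphi(\mathbf u_m)=\varphi(\mathbf v)$ immediately.
\end{itemize}
This needs no analysis of where the $b$- and $c$-letters sit, and no decomposition of images into generators.
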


\subsection{The variety $\mathbf O$}
\label{subsec: O}

Let
\[
\mathbf O:=\var\{xzxyxty \approx xzyxty,\, xzytxyx \approx xzytyx\}.  
\]
We denote by $(\mathbf a_1,\mathbf a_2,\mathbf a_3,\mathbf a_4,\ldots)$ the alternating sequence $(x,y,x,y,\ldots)$.

\begin{proposition}[\!{\cite[Proposition~2.8]{Gusev-Lee-Zhang-26}}] 
\label{P: subvarieties of O cap J}
Every non-commutative subvariety $\mathbf V$ of the variety $\mathbf O\{(xy)^2\approx (yx)^2\}$ is defined within $\mathbf O\{(xy)^2\approx (yx)^2\}$ by $\Sigma_1 \cup \Sigma_2$ for some subset $\Sigma_1$ of
\[
\Phi_1:= \{xyx \approx yx^2,\, x^2yx \approx x^2y,\,  x^2yxzx \approx x^2yzx,\,  x^2yx\approx xyx\}
\] 
and some subset~$\Sigma_2$ of
\[
\Phi_2:=\left\{ \; \mathbf h\mathbf p\mathbf c \approx \mathbf h\mathbf q\mathbf c \; \; \left| 
\begin{array}{l} 
\mathbf h \in \{ 1,\, yh\}; \; \alf(\mathbf p)=\alf(\mathbf q)=\{x,y\}; \\[0.05in] 
\mathbf c \in \big\{ 1,\, txy,\, \prod_{i=1}^k (t_i\mathbf a_i), \,\prod_{i=2}^{k+1} (t_i\mathbf a_i) \,\big|\, k \geq 1 \big\}; \\[0.05in] 
1 \leq |\mathbf p|_x = |\mathbf q|_x, \, |\mathbf p|_y = |\mathbf q|_y \leq 2; \\[0.05in] 
|\mathbf h\mathbf p\mathbf c|_x = |\mathbf h\mathbf q\mathbf c|_x,\, |\mathbf h\mathbf p\mathbf c|_y = |\mathbf h\mathbf q\mathbf c|_y \geq 2 
\end{array} 
\right. \right\}.
\]
\end{proposition}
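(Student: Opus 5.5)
The plan is to describe a non-commutative subvariety $\mathbf V$ of $\mathbf W:=\mathbf O\{(xy)^2\approx (yx)^2\}$ by its \emph{canonical words}: these are representatives of the equivalence classes of words modulo the identities of $\mathbf W$. We then show that every identity of $\mathbf V$ is equivalent modulo $\mathbf W$ to a collection of identities from $\Phi_1\cup\Phi_2$.

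\textbf{Step 1: normal forms in $\mathbf W$.} We first record consequences of the defining identities:
\begin{itemize}
\item $xzxyxty\approx xzyxty$ lets us delete any occurrence of $x$ that is neither first nor last, provided that occurrence lies between the first occurrence of $x$ and an occurrence of some $y$ that is itself followed by a later $y$; the dual identity gives the symmetric deletion.
\item $(xy)^2\approx(yx)^2$ gives the usual $\mathbf A_0$-type commutation of squares.
\end{itemize}
From these we derive that each word is equal in $\mathbf W$ to a word in which every multiple letter occurs at most a bounded number of times in each block between simple letters. We also need a description of when $\mathbf W$ satisfies $\mathbf u\approx\mathbf v$: the two words must have the same simple letters in the same order (using $M(xy)$-type arguments, as in Lemma~\ref{L: id Q1}), together with conditions on the relative order of first and last occurrences of pairs of letters.

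\textbf{Step 2: reduction to two letters.} Let $\mathbf u\approx\mathbf v$ hold in $\mathbf V$ but not in $\mathbf W$. We locate a pair of letters $x,y$ whose relative configuration differs between $\mathbf u$ and $\mathbf v$. Deleting all other multiple letters and renaming the simple letters as the $t_i$ yields a consequence of $\mathbf u\approx\mathbf v$ of the form $\mathbf h\mathbf p\mathbf c\approx\mathbf h\mathbf q\mathbf c$. The prefix $\mathbf h$ records whether $y$ occurs before, separated from the rest by a simple letter $h$. The suffix $\mathbf c$ records the alternating pattern in which $x$ and $y$ reappear after the simple letters $t_i$. The bounds $|\mathbf p|_x,|\mathbf p|_y\le 2$ come from the normal form of Step~1.

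Conversely, we must show that the set of all such two-letter consequences (together with the $\Phi_1$-identities that $\mathbf u\approx\mathbf v$ implies) in turn implies $\mathbf u\approx\mathbf v$ modulo $\mathbf W$. We do this by transforming $\mathbf u$ into $\mathbf v$ one adjacent swap or deletion at a time. Each step is justified by substituting into an identity from $\Phi_1\cup\Phi_2$, where $\mathbf c$ is chosen to match the actual later occurrences.

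\textbf{Role of $\Phi_1$ and the main obstacle.} The identities in $\Phi_1$ handle the cases where the difference between $\mathbf u$ and $\mathbf v$ is a change in occurrence multiplicity rather than in order:
\begin{itemize}
\item $xyx\approx yx^2$ and $x^2yx\approx xyx$ govern the collapse of occurrence counts;
\item $x^2yx\approx x^2y$ and $x^2yxzx\approx x^2yzx$ govern the deletion of late occurrences.
\end{itemize}
Non-commutativity rules out degenerate cases such as $xy\approx yx$. The main obstacle is the converse direction of Step~2: checking that an arbitrary reordering can be decomposed into elementary steps, each an instance of a \emph{single} identity in $\Phi_2$ with exactly the listed shapes of $\mathbf c$. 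I would attack this by induction on the number of inversions between $\mathbf u$ and $\mathbf v$, always swapping the leftmost discrepancy. The tail of the word after the swap site is projected onto the alternating sequence $(\mathbf a_i)$, using that $\mathbf O$ lets non-alternating repetitions be deleted.
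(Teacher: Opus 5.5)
The paper does not prove this proposition; it imports it from the cited work of Gusev, Lee and Zhang. So your outline has to stand on its own, and it does not. Every substantive step is asserted rather than argued, and the concrete rewriting mechanisms you do specify are false in $\mathbf W=\mathbf O\{(xy)^2\approx(yx)^2\}$.

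\textbf{Your normal-form mechanisms fail in $\mathbf W$.} By Lemma~\ref{L: id Q1}, $Q$ satisfies both defining identities of $\mathbf O$ and $(xy)^2\approx(yx)^2$, so $Q\in\mathbf W$. Hence every identity of $\mathbf W$ preserves the sequence of simple letters \emph{and} the alphabet of each block between consecutive simple letters.
\begin{itemize}
\item Your deletion rule in Step~1 would yield $xzxytyx\approx xzytyx$: the middle $x$ is neither first nor last, and it lies between the first $x$ and a $y$ that is followed by a later $y$. But the block between $z$ and $t$ changes from $\{x,y\}$ to $\{y\}$, so $Q$ refutes it. The identity $xzxyxty\approx xzyxty$ only removes an $x$ from a block that still contains an $x$ after the $y$.
\item For the same reason, ``projecting the tail onto the alternating sequence by deleting non-alternating repetitions'' is not an operation modulo $\mathbf W$. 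In a tail $t_1xt_2xt_3y$ you cannot drop the second $x$, and dropping $t_2$ gives only a consequence, not an equivalent identity.
\item Your word-problem description is too coarse. $\mathbf H\subseteq\mathbf W$ (by Lemma~\ref{L: implies O}), and $\mathbf H$ separates $x^2y^2\prod_{i=1}^m t_i\mathbf a_i$ from $(xy)^2\prod_{i=1}^m t_i\mathbf a_i$. This is why $\xi_m$ is non-trivial; cf.\ the proof of Lemma~\ref{L: does not contain H}. Yet the two words agree in simple letters, block alphabets and all global first/last-occurrence orders.
\end{itemize}
So the normal-form theory that Steps~1 and~2 rely on is not available.

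\textbf{The core of the proposition is what you leave open.} Two things must be proved:
\begin{itemize}
\item[(i)] Any identity of $\mathbf V$ failing in $\mathbf W$ implies one of the exact shapes in $\Phi_1\cup\Phi_2$. That is, the discrepancy can be confined to a single block with at most two occurrences of each of $x,y$, preceded by nothing or by $yh$ only. You would need to explain, for instance, why prefixes in which both $x$ and $y$ occur never have to be recorded.
\item[(ii)] Conversely, these consequences re-derive $\mathbf u\approx\mathbf v$ modulo $\mathbf W$.
\end{itemize}
Deleting all but two multiple letters loses information. Your induction on inversions presupposes that each intermediate word is $\mathbf V$-equal to $\mathbf u$, which is precisely what must be shown. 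Moreover, a transposition of $x$ and $y$ in a context where other multiple letters are interleaved with the $\mathbf p$-region is not in general a substitution instance of any $\mathbf h\mathbf p\mathbf c\approx\mathbf h\mathbf q\mathbf c$. A substitution must send all occurrences of $x$ (resp.\ $y$) to the same word, so interleaved letters generally cannot be accommodated.

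What is missing is an argument, carried out with the identities of $\mathbf W$, that such letters can be eliminated or moved out of the way, so that every step becomes such an instance. Once that is done, passing from an alternating tail back to the actual tail is routine, via substitutions like $t_i\mapsto t_i\mathbf a_it_i'$. As it stands, the proposal restates the goal rather than proving it.
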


Analyzing the proof of Proposition~\ref{P: subvarieties of O cap J}, one can observe that the condition that $\mathbf{V}$ satisfies the identity $(xy)^2 \approx (yx)^2$ is not used anywhere in the case when the word $xy$ is an isoterm for $\mathbf{V}$.
Since every variety for which $xy$ is an isoterm is non-commutative, the following result is valid.

\begin{proposition} 
\label{P: subvarieties of O}
Every subvariety of $\mathbf O$ for which $xy$ is an isoterm is defined within $\mathbf O$ by $\Sigma_1 \cup \Sigma_2$ for some $\Sigma_1 \subseteq \Phi_1$ and $\Sigma_2 \subseteq \Phi_2$.
\end{proposition}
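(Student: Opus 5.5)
The plan is to re-run the argument behind Proposition~\ref{P: subvarieties of O cap J} for a subvariety $\mathbf V\subseteq\mathbf O$ with $xy$ an isoterm, and to check at each step that the identity $(xy)^2\approx(yx)^2$ is never invoked. Concretely, let $\Sigma$ be the set of identities of $\Phi_1\cup\Phi_2$ that hold in $\mathbf V$, and put $\mathbf V':=\mathbf O\Sigma$. Then $\mathbf V\subseteq\mathbf V'$ holds trivially, so the task is to prove that every identity $\mathbf u\approx\mathbf v$ of $\mathbf V$ is a consequence of $\Sigma$ together with the two identities defining $\mathbf O$.

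First I will bring $\mathbf u$ and $\mathbf v$ into a normal form modulo $\mathbf O$. The identities $xzxyxty\approx xzyxty$ and $xzytxyx\approx xzytyx$ allow deletion of ``internal'' occurrences of a letter, namely those lying between its first and last occurrence whenever another letter straddles them. Hence every word is equivalent modulo $\mathbf O$ to one in which each letter has a bounded number of occurrences sitting in restricted positions relative to the other letters.

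Because $xy$ is an isoterm for $\mathbf V$, Lemma~\ref{L: isoterm} gives $M(xy)\in\mathbf V$. Consequently $\mathbf u\approx\mathbf v$ preserves the simple letters, their order, and the set of multiple letters. This replaces exactly the information that, in the proof of Proposition~\ref{P: subvarieties of O cap J}, came from non-commutativity. I will then decompose $\mathbf u\approx\mathbf v$ into a sequence of elementary steps, each of which swaps or deletes an adjacent occurrence of a letter. Each such step that is valid in $\mathbf V$ is a substitution instance of an identity from $\Phi_1\cup\Phi_2$, and this works by the same case analysis on the blocks $\mathbf h$, $\mathbf p$, $\mathbf c$ as before. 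The identity $(xy)^2\approx(yx)^2$ was only ever used to handle words with no simple letters separating occurrences, and in the present setting those cases are replaced by ones where $\mathbf c$ is nontrivial.

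The main obstacle is to verify that none of these reduction steps secretly relied on $(xy)^2\approx(yx)^2$. Such a reliance could occur, for instance, in collapsing $\mathbf p$ to length at most $2$ in each letter, or in handling the case $\mathbf c=1$. There I would argue directly with the $\mathbf O$ identities and $M(xy)\in\mathbf V$. If some case genuinely needed $(xy)^2\approx(yx)^2$, I would add the corresponding $\Phi_2$ instance with $\mathbf c=1$, which already lies in $\Phi_2$, so the shape of the conclusion is unaffected.
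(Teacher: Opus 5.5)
Your plan is the same as the paper's. The paper's whole justification is the observation that, in the proof of Proposition~\ref{P: subvarieties of O cap J}, the identity $(xy)^2\approx(yx)^2$ is never used once $xy$ is an isoterm, and that every variety for which $xy$ is an isoterm is non-commutative. Inside $\mathbf O\{(xy)^2\approx(yx)^2\}$ the two conditions coincide. By Lemma~\ref{L: xy isoterm}, a non-commutative subvariety for which $xy$ is not an isoterm would be idempotent, and idempotency together with $(xy)^2\approx(yx)^2$ forces $xy\approx yx$. Your remark that $M(xy)\in\mathbf V$ replaces the information that came from non-commutativity is exactly this point.

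You should, however, drop the fallback in your last paragraph, because it does not work. $\Sigma_2$ may contain only identities that actually hold in $\mathbf V$. The varieties this proposition is meant to cover generally violate $(xy)^2\approx(yx)^2$. Examples are $\mathbf O$ itself, or any subvariety of $\mathbf O$ containing $\mathbf B_2$. Indeed, $\mathbf B_2\subseteq\mathbf O$ by Lemma~\ref{L: word problem LRB}, while $\ini((xy)^2)=xy\ne yx=\ini((yx)^2)$. Such varieties are exactly the ones needed later, e.g.\ in Lemma~\ref{L: O-2}. So if some reduction step in the isoterm case genuinely used $(xy)^2\approx(yx)^2$ as an axiom, you could not add it, and the proof would simply fail to transfer to these $\mathbf V$. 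The argument stands only because the verification succeeds outright, which is what the paper asserts.

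Relatedly, your guess about where $(xy)^2\approx(yx)^2$ enters is off. You suggest it handled words with no separating simple letters, and that these cases are now ``replaced'' by ones with nontrivial $\mathbf c$. Those cases do not disappear when the hypothesis is dropped. For instance, $x^2y^2\approx(xy)^2$, or $(xy)^2\approx(yx)^2$ itself, must now be captured by members of $\Phi_2$ with $\mathbf h=\mathbf c=1$. Had the original proof relied on $(xy)^2\approx(yx)^2$ there, the transfer would break rather than go through.
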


\begin{lemma}[\!{\cite[Corollary~2.9]{Gusev-Lee-Zhang-26}}]
\label{L: O cap J2}
Let $\mathbf V$ be a subvariety of $\mathbf O\{(xy)^2\approx (yx)^2\}$.
If, in addition, $\mathbf V$ satisfies the identity $x^2yzx\approx x^2yxzx$ or the identity 
\[
\sigma_m:\enskip  xy \biggl(\prod_{i=1}^m t_i\mathbf a_i\biggr) \approx yx \biggl(\prod_{i=1}^m t_i\mathbf a_i\biggr)
\]   
for some $m\ge 2$, then $\mathbf V$ is a Cross variety.
\end{lemma}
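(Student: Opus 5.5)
The plan is to verify the three Cross conditions separately. Smallness and finite basedness will both come from Proposition~\ref{P: subvarieties of O cap J}, and finite generation will then follow from Lemma~\ref{L: xyx=xyxx is finitely generated} or a local finiteness argument. First, if $\mathbf V$ is commutative, it is Cross by Corollary~\ref{C: com id Cross}(i), since $\mathbf O\{(xy)^2\approx(yx)^2\}$ consists of aperiodic monoids. So assume $\mathbf V$ is non-commutative. Every subvariety $\mathbf W$ of $\mathbf V$ still satisfies the extra identity ($x^2yzx\approx x^2yxzx$ or $\sigma_m$). If $\mathbf W$ is non-commutative, Proposition~\ref{P: subvarieties of O cap J} says it is defined within $\mathbf O\{(xy)^2\approx(yx)^2\}$ by $\Sigma_1\cup\Sigma_2$ with $\Sigma_1\subseteq\Phi_1$ and $\Sigma_2\subseteq\Phi_2$. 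Commutative subvarieties of $\mathbf V$ form a finite set, since $\mathbf V$ is aperiodic and hence, by Corollary~\ref{C: com id Cross}(i), so is its largest commutative subvariety.

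The key step is a reduction lemma. Modulo the identities of $\mathbf O\{(xy)^2\approx(yx)^2\}$ together with the extra identity, every member of $\Phi_2$ whose tail $\mathbf c$ is $\prod_{i=1}^k(t_i\mathbf a_i)$ or $\prod_{i=2}^{k+1}(t_i\mathbf a_i)$ with $k$ large is equivalent to one with bounded $k$, or is trivial.
\begin{itemize}
\item For $\sigma_m$: in a long tail, the occurrences of $x,y$ in the prefix $\mathbf h\mathbf p$ are followed by at least $m$ alternating blocks $t_i\mathbf a_i$. I would apply $\sigma_m$, substituting for $x,y$ the relevant letters and for the $t_i$ suitable factors, to show that $\mathbf h\mathbf p\mathbf c\approx\mathbf h\mathbf q\mathbf c$ already holds in $\mathbf V$ once $k\ge m+c$ for a small constant $c$. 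The point is that $\sigma_m$ lets adjacent $x,y$ be swapped whenever they are followed by an alternating tail of length $m$. Together with $(xy)^2\approx(yx)^2$ and the two defining identities of $\mathbf O$, which allow deleting or inserting non-first and non-last occurrences in the right contexts, this should rearrange $\mathbf p$ into $\mathbf q$.
\item For $x^2yzx\approx x^2yxzx$: every letter with a square occurrence can absorb later occurrences. The tails $\prod t_i\mathbf a_i$ for different $k$ then collapse, and the identities in $\Phi_2$ reduce to finitely many up to equivalence.
\end{itemize}

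Granting the reduction, only finitely many subsets of a fixed finite subset $\Phi^\ast\subseteq\Phi_1\cup\Phi_2$ are needed. Hence $\mathbf V$ has finitely many subvarieties. Each of them, including $\mathbf V$, is finitely based: $\mathbf O$ has a finite basis, we add $(xy)^2\approx(yx)^2$, the extra identity, and a subset of $\Phi^\ast$. Here $\Phi_2$ has only finitely many choices of $\mathbf p,\mathbf q,\mathbf h$ (occurrence counts at most 2), so the only infinite parameter is $k$, which the reduction lemma bounds.

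For finite generation, I would try to derive $xyx\approx xyx^2$ within $\mathbf O\{(xy)^2\approx(yx)^2\}$ and then quote Lemma~\ref{L: xyx=xyxx is finitely generated}. A candidate derivation substitutes into the defining identities of $\mathbf O$ and uses $(xy)^2\approx(yx)^2$ to obtain a periodicity-type identity. If that fails, the fallback is to prove local finiteness directly: words over a finite alphabet reduce modulo the defining identities of $\mathbf O$ to words in which each letter occurs a bounded number of times, and a locally finite small variety is finitely generated by \cite[Lemma~2.1]{Jackson-Lee-18}.

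I expect the main obstacle to be the reduction lemma for $\sigma_m$, namely proving explicitly that the long-tail identities become consequences. I would first check the case $\mathbf h=1$, $\mathbf p=xy$, $\mathbf q=yx$ by direct substitution into $\sigma_m$, and then handle $|\mathbf p|_x=2$ or $|\mathbf p|_y=2$ by using $\mathbf O$ to delete middle occurrences.
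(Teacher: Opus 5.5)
The paper does not reprove this lemma; it quotes it from the cited source. Your template is the one the paper itself uses in the proof of Lemma~\ref{L: O-2}: Proposition~\ref{P: subvarieties of O cap J}, then bound the tail length $k$ in $\Phi_2$, then Lemma~\ref{L: xyx=xyxx is finitely generated}. Two of your worries are unfounded.

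\begin{itemize}
\item Finite generation is immediate. $\mathbf O$ itself satisfies $xzx\approx xzx^2$: put $y,t\mapsto 1$ in $xzxyxty\approx xzyxty$.
\item The $\sigma_m$ case is easy, not the main obstacle. Write $\mathbf p=\mathbf p_1xy\mathbf p_2$. Substituting $t_1\mapsto \mathbf p_2t_1$ in $\sigma_m$ and multiplying on both sides swaps that adjacent pair $xy$. Hence every member of $\Phi_2$ with $k\ge m$ already holds in $\mathbf V$. For tails $t_2y\,t_3x\cdots$, rename $x\leftrightarrow y$.
\end{itemize}

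The genuine gap is the case $x^2yzx\approx x^2yxzx$, which is where the real work lies, and there you give no argument. Your heuristic does not work as stated. In $\sigma_k$ there are no squares at all. Moreover, $x^2yzx\approx x^2yxzx$ only inserts or deletes occurrences strictly between a square and a later occurrence; it never removes the last one. So nothing can \emph{collapse} at the level of words. What must collapse is the family of identities, and for that you need the nontrivial implication from the long-tail identity to a short-tail one. The converse direction is just right multiplication, and your sketch never produces the nontrivial direction.

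One way to supply it is as follows. Write $\mathbf c_k=t_1x\,t_2y\,t_3x\cdots t_k\mathbf a_k$. Since $x$ and $y$ occur in $\mathbf p$, the identity $xzx\approx xzx^2$ squares the $x$ after $t_1$ and the $y$ after $t_2$. Then $x^2yzx\approx x^2yxzx$ lets you insert $x$'s anywhere before the last $x$ (after $t_3$), and $y$'s anywhere before the last $y$ (after $t_4$). This gives
\[
\mathbf h\mathbf p\,t_1x\,t_2y\,t_3x\,t_4y\approx \mathbf h\mathbf p\,t_1x^2\,t_2y^2(xy)^r\,t_3x\,t_4y\quad\text{for every } r\ge 1 .
\]
Now substitute $t_1\mapsto t_1x^2t_2y^2$ and $t_2,\dots,t_k\mapsto 1$ in $\mathbf h\mathbf p\mathbf c_k\approx \mathbf h\mathbf q\mathbf c_k$, taking $r$ with $k\le 2r$. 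Its left side becomes a prefix of the right-hand word above. Apply it there, then undo the padding; this is legitimate because $x$ and $y$ also occur in $\mathbf q$. The result is $\mathbf h\mathbf p\mathbf c_4\approx\mathbf h\mathbf q\mathbf c_4$.

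Thus every member of $\Phi_2$ with $k\ge 4$ is equivalent, modulo $\{xzx\approx xzx^2,\ x^2yzx\approx x^2yxzx\}$, to its truncation at $k=4$. Tails starting $t_2y$ are handled symmetrically. This is exactly the finiteness you need for both smallness and the finite basis in this case, and it is missing from your proposal.
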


Similar to Proposition~\ref{P: subvarieties of O cap J}, one can observe that the condition that $\mathbf{V}$ satisfies the identity $(xy)^2 \approx (yx)^2$ is not used in the proof of Lemma~\ref{L: O cap J2} in the case when the word $xy$ is an isoterm for $\mathbf V$.
If the word $xy$ is not an isoterm for $\mathbf V$, then, since $\mathbf B$ violates $xzxyxty \approx xzyxty$ by Lemma~\ref{L: x=xx}, it follows from Corollary~\ref{C: com id Cross} and Lemma~\ref{L: xy isoterm} that $\mathbf V$ is a Cross variety.
Thus, the following result is valid.

\begin{lemma}
\label{L: O}
Let $\mathbf V$ be a subvariety of $\mathbf O$.
If, in addition, $\mathbf V$ satisfies the identity $x^2yzx\approx x^2yxzx$ or the identity $\sigma_m$ for some $m\ge 2$, then $\mathbf V$ is a Cross variety.
\end{lemma}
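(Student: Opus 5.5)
The plan is to split according to whether the word $xy$ is an isoterm for $\mathbf V$. The case where it is not is short; the case where it is will be handled by re-running the argument behind Lemma~\ref{L: O cap J2}, with Proposition~\ref{P: subvarieties of O} used in place of Proposition~\ref{P: subvarieties of O cap J}.

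\emph{Case 1: $xy$ is not an isoterm for $\mathbf V$.} By Lemma~\ref{L: xy isoterm}, $\mathbf V$ is either commutative or idempotent.
\begin{itemize}
\item If $\mathbf V$ is commutative, it is Cross by Corollary~\ref{C: com id Cross}(i), since $\mathbf V\subseteq\mathbf O$ consists of aperiodic monoids.
\item If $\mathbf V$ is idempotent, then $\mathbf V\subseteq\mathbf B$. We check with Lemma~\ref{L: x=xx} that $\mathbf B$ violates $xzxyxty\approx xzyxty$. Both sides have the same alphabet. Taking $\ell$ of each side gives $xzxyx$ and $xzyx$. For these two words, $r$ gives $xyx$ and $yx$, and then $\ell$ gives $x$ and $y$, which differ. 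Hence $\mathbf B\not\subseteq\mathbf O$, so $\mathbf V$ is a proper subvariety of $\mathbf B$ and is Cross by Corollary~\ref{C: com id Cross}(ii).
\end{itemize}

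\emph{Case 2: $xy$ is an isoterm for $\mathbf V$.} By Proposition~\ref{P: subvarieties of O}, every subvariety of $\mathbf V$ for which $xy$ is an isoterm is defined within $\mathbf O$ by $\Sigma_1\cup\Sigma_2$ with $\Sigma_1\subseteq\Phi_1$ and $\Sigma_2\subseteq\Phi_2$. Every other subvariety of $\mathbf V$ is commutative or idempotent by Lemma~\ref{L: xy isoterm}, and these subvarieties are controlled by Case~1. The key step is to show that, modulo the identities of $\mathbf O$ together with $x^2yzx\approx x^2yxzx$ (respectively $\sigma_m$), only finitely many identities of $\Phi_2$ remain pairwise non-equivalent.
\begin{itemize}
\item The set $\Phi_2$ is infinite only because of the parameter $k$ in the tails $\mathbf c=\prod_{i=1}^k(t_i\mathbf a_i)$ and $\prod_{i=2}^{k+1}(t_i\mathbf a_i)$.
\item I expect that once $k$ exceeds a bound depending on $m$, each such identity $\mathbf h\mathbf p\mathbf c\approx\mathbf h\mathbf q\mathbf c$ becomes a consequence of the assumed identity, or is equivalent to one with a shorter tail. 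To show this, I will substitute into $\sigma_m$, or use $x^2yzx\approx x^2yxzx$ to insert and delete occurrences of $x$ and $y$ between the $t_i$, thereby shortening the alternating tail.
\end{itemize}
This is the step I expect to be the main obstacle: tracking occurrences in the alternating tails carefully. It is exactly the computation in the proof of Lemma~\ref{L: O cap J2}, and I will verify that $(xy)^2\approx(yx)^2$ is never invoked there when $xy$ is an isoterm.

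Granting this reduction, every subvariety of $\mathbf V$ is defined within $\mathbf O$ by a subset of a fixed finite set of identities. Hence $\mathbf V$ is small, and $\mathbf V$ is finitely based, because $\mathbf O$ is finitely based. For finite generation, I will show that $\mathbf O$, and hence $\mathbf V$, satisfies $xyx\approx xyx^2$, using suitable substitutions into the two defining identities of $\mathbf O$, and then apply Lemma~\ref{L: xyx=xyxx is finitely generated}. If this identity is not derivable, I will instead fall back on local finiteness of $\mathbf O$ together with the fact that locally finite small varieties are finitely generated.
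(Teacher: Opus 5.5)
Your proposal is correct and follows the paper's argument. You split on whether $xy$ is an isoterm, handle the commutative and idempotent case via Corollary~\ref{C: com id Cross} using that $\mathbf B$ violates $xzxyxty\approx xzyxty$, and otherwise observe that the proof of Lemma~\ref{L: O cap J2} never uses $(xy)^2\approx(yx)^2$ when $xy$ is an isoterm. Your finite-generation step also works directly: substituting $y,t\mapsto 1$ in $xzxyxty\approx xzyxty$ gives $xzx^2\approx xzx$, so Lemma~\ref{L: xyx=xyxx is finitely generated} applies without the fallback.
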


\begin{lemma}[\!{\cite[proof of the dual of Corollary~3.6]{Gusev-Sapir-22}}]
\label{L: implies O}
If $\mathbf V$ satisfies the identities $xyx\approx x^2yx$ and $xty^2x\approx xtxy^2x$, then $\mathbf V\subseteq\mathbf O$.
\end{lemma}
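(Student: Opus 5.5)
We must prove Lemma~\ref{L: implies O}: if $\mathbf V$ satisfies $xyx\approx x^2yx$ and $xty^2x\approx xtxy^2x$, then $\mathbf V$ satisfies the two defining identities of $\mathbf O$,
\[
xzxyxty \approx xzyxty \quad\text{and}\quad xzytxyx \approx xzytyx .
\]
The approach is to derive each of these by direct manipulation of words, using only the two hypotheses. The hypothesis $xyx\approx x^2yx$ lets us square any non-last occurrence of a letter whose later occurrence is separated from it. Substituting $y\mapsto 1$ gives $x^2\approx x^3$.

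First I would derive the second identity, $xzytxyx \approx xzytyx$. In $xzytxyx$, since $y$ occurs before, the hypothesis $xyx\approx x^2yx$ applied with $x\mapsto y$ gives $ytxy \approx y^2txy$. Hence $xzytxyx \approx xzy^2txyx$. Now view $x\,(z)\,y^2\,(t)\,x\,y\,x$, and try to insert and remove copies of $x$ next to $y^2$ using $xty^2x\approx xtxy^2x$ read in both directions. The goal is to move the middle $x$ of $txyx$ so that the occurrences of $x$ after $y^2$ collapse, reaching $xzy^2tyx$, and then to unsquare $y$ to get $xzytyx$. Some auxiliary consequences will be needed along the way:
\begin{itemize}
\item $xty^2x\approx xty^2x^2$, obtained from $xyx\approx x^2yx$ and its variants;
\item $y^2xy^2\approx y^2xy^2x$-type rewritings obtained by substitution;
\item $xy^2x \approx xxy^2x \approx x y^2 x y^2 x$ from the second hypothesis with $t\mapsto1$.
\end{itemize}

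The first identity, $xzxyxty\approx xzyxty$, has a middle $x$ flanked by $x$'s on both sides, and $y$ recurs later. I would first square $y$ using $yxty\approx y^2xty$. Then I would apply the second hypothesis with its letters renamed so that $y^2$ sits between two occurrences of $x$, in the form $x(z)\,x\,y^2\,x \leftrightarrow x(z)\,y^2 x$. This matches $xty^2x\approx xtxy^2x$ with $t\mapsto z$. Removing the inner $x$ gives $xzy^2xty$, and unsquaring $y$ via $y^2xty\approx yxty$ yields $xzyxty$. This derivation looks clean.

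The main obstacle is the second identity. There the deleted occurrence of $x$ lies between $y$'s on the right side, which is not directly of the shape the hypotheses cover, since both hypotheses are left-biased: squares are created on the left occurrence. I would first try substituting into $xty^2x\approx xtxy^2x$ with $t\mapsto zy^2t$ and possibly $y\mapsto$ a product, to create an $x$ adjacent to a squared $y$ on the right. Then I would use $x^2\approx x^3$ together with $xyx\approx x^2yx$ to absorb the extra $x$. If this fails, I would follow the original argument of \cite{Gusev-Sapir-22} for the dual statement, translating each step.
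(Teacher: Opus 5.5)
Your derivation of the first identity $xzxyxty\approx xzyxty$ is correct. You square $y$ via $yxty\approx y^2xty$, apply the instance $t\mapsto z$ of $xty^2x\approx xtxy^2x$, and then unsquare $y$.

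The gap is the second identity $xzytxyx\approx xzytyx$, which you never actually derive. Squaring the \emph{first} occurrence of $y$ to get $xzy^2txyx$ leads nowhere. To delete the $x$ between $t$ and the last $y$ by means of $xty^2x\approx xtxy^2x$, that $x$ must be immediately followed by a square and then by an $x$. Here it is followed only by the single last occurrence of $y$. Your fallbacks (substituting $t\mapsto zy^2t$, or translating the cited argument) are not a proof.

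The obstruction you name, that both hypotheses are ``left-biased'', is false, and this misconception is exactly what blocks you. Relatedly, your first auxiliary identity $xty^2x\approx xty^2x^2$ does not follow from $xyx\approx x^2yx$ alone. No application of that identity can change the suffix after the last $y$ when that suffix is the single letter $x$. The auxiliary identity needs the second hypothesis.

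The missing observation is this: substituting $y\mapsto 1$ in $xty^2x\approx xtxy^2x$ gives $xtx\approx xtx^2$. So the \emph{last} occurrence of a multiple letter can be squared as well. Then
\[
xzytxyx\approx xzytxy^2x\approx xzyty^2x\approx xzytyx .
\]
The three steps are:
\begin{itemize}
\item[(i)] the instance $ytxy\approx ytxy^2$ of $xtx\approx xtx^2$;
\item[(ii)] $xty^2x\approx xtxy^2x$ with $t\mapsto zyt$, read from right to left;
\item[(iii)] $yty^2\approx yty$.
\end{itemize}
With this the argument is complete, and $xyx\approx x^2yx$ is not even needed for this half. The paper gives no proof of its own, only the citation, so this short derivation is what your proposal is missing.
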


\begin{lemma}
\label{L: O-2}
Let $\mathbf V$ be a monoid variety.
If $\mathbf V$ satisfies the identities $xyx\approx x^2yx$, $xty^2x\approx xtxy^2x$ and
\[
\tau_m:\enskip  x^2y^2\biggl(\prod_{i=1}^m t_i\mathbf a_i\biggr)  \approx  xy^2x\biggl(\prod_{i=1}^m t_i\mathbf a_i\biggr)
\]
for some $m\ge 2$, then $\mathbf V$ is a Cross variety.
\end{lemma}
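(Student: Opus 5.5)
The plan is to reduce Lemma~\ref{L: O-2} to Lemma~\ref{L: O}. Since $\mathbf V$ satisfies $xyx\approx x^2yx$ and $xty^2x\approx xtxy^2x$, Lemma~\ref{L: implies O} gives $\mathbf V\subseteq\mathbf O$. It then suffices to show that $\mathbf V$ satisfies $\sigma_m$ for the given $m$, or failing that, $\sigma_{m'}$ for some $m'\ge 2$, or $x^2yzx\approx x^2yxzx$. Lemma~\ref{L: O} then gives that $\mathbf V$ is Cross.

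If $xy$ is not an isoterm for $\mathbf V$, then $\mathbf V$ is commutative or idempotent by Lemma~\ref{L: xy isoterm}. The idempotent case is excluded as in the discussion before Lemma~\ref{L: O}, because $\mathbf V\subseteq\mathbf O$ and $\mathbf B$ violates the identities of $\mathbf O$. In the commutative case, aperiodicity follows from $xyx\approx x^2yx$, which yields $x^2\approx x^3$, so $\mathbf V$ is Cross by Corollary~\ref{C: com id Cross}. Hence we may assume that $xy$ is an isoterm. Then Proposition~\ref{P: subvarieties of O} applies: $\mathbf V$ is defined within $\mathbf O$ by $\Sigma_1\cup\Sigma_2$ with $\Sigma_1\subseteq\Phi_1$ and $\Sigma_2\subseteq\Phi_2$.

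The core computation is to derive $\sigma_m$ (or $\sigma_{m+1}$) from $\tau_m$ together with the identities of $\mathbf O$ and $xyx\approx x^2yx$. Write $\mathbf c=\prod_{i=1}^m t_i\mathbf a_i$; in $\mathbf c$ both $x$ and $y$ occur, since $m\ge2$. In $xy\,\mathbf c$, each of $x,y$ already has a later occurrence. The identities of $\mathbf O$, namely $xzxyxty\approx xzyxty$ and its dual, allow inserting or deleting occurrences of a letter between earlier and later occurrences. Together with $xyx\approx x^2yx$, they should let us rewrite
\[
xy\,\mathbf c \;\approx\; x^2y^2\,\mathbf c
\]
or a close variant obtained by multiplying on the left by a fresh simple letter $t_0$. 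The idea is to duplicate the first occurrences using $xyx\approx x^2yx$, which applies since $x$ reoccurs later, and then do the same for $y$. Applying $\tau_m$ gives $xy^2x\,\mathbf c$. By the same mechanism, $y^2x\,\mathbf c$ reduces to $yx\,\mathbf c$, using the $\mathbf O$-identities to delete the first $x$ sitting before $y$ when $x$ reappears later. If a leading occurrence of $x$ cannot be removed directly, I would instead derive $\sigma_{m+1}$ by substituting $t_1\mapsto t_0$ and shifting the indices, so that an extra block absorbs the boundary effect.

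The main obstacle is this rewriting step. The $\mathbf O$-identities only permit moving or deleting occurrences in specific configurations (an $x$ occurring between occurrences of $y$, with a simple letter separating them from later ones). So care is needed to check that $x^2y^2\mathbf c\approx xy\mathbf c$ and $xy^2x\mathbf c\approx yx\mathbf c$ actually follow.

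If direct derivation fails, the fallback is to use the classification in Proposition~\ref{P: subvarieties of O}. We would check which $\Sigma_1\cup\Sigma_2$ are compatible with $\tau_m$, using Lemma~\ref{L: id Q1}-style invariants to test identities. We would then show that each such variety already satisfies some $\sigma_{m'}$ or $x^2yzx\approx x^2yxzx$, after which Lemma~\ref{L: O} finishes the proof.
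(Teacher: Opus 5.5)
Your reduction to Lemma~\ref{L: O} cannot succeed, because its target is false. The hypotheses of Lemma~\ref{L: O-2} imply neither $\sigma_{m'}$ for any $m'\ge 2$ nor $x^2yzx\approx x^2yxzx$. A concrete witness is $\mathbf Q\vee\mathbf B_2$.

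By Lemma~\ref{L: word problem LRB}, $\mathbf B_2$ satisfies $xyx\approx x^2yx$, $xty^2x\approx xtxy^2x$, every $\tau_m$ and both identities of $\mathbf O$, since in each case the two sides have the same initial part. It violates every $\sigma_k$, because $\ini(xy\mathbf c)$ begins with $xy$ and $\ini(yx\mathbf c)$ begins with $yx$.

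By Lemma~\ref{L: id Q1}, $Q$ satisfies $xyx\approx x^2yx$, $xty^2x\approx xtxy^2x$ and every $\tau_m$. It violates $x^2yzx\approx x^2yxzx$, because the blocks between the simple letters $y$ and $z$ have alphabets $\emptyset$ and $\{x\}$.

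So $\mathbf Q\vee\mathbf B_2$ meets all the hypotheses yet satisfies neither identity that Lemma~\ref{L: O} needs. The lemma itself is not threatened, since $\mathbf Q\vee\mathbf B_2$ is Cross; only your route through Lemma~\ref{L: O} is.

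This is also exactly where your core computation breaks. Every identity you intend to use holds in $\mathbf B_2$, so each preserves the order of first occurrences of letters. Hence $xy^2x\,\mathbf c\approx yx\,\mathbf c$ is underivable, and re-indexing with an extra $t_0$ changes nothing. The obstacle you flagged is therefore not delicate but insurmountable. Your fallback aims at the same conclusion and fails for the same reason.

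What is missing is a direct finiteness argument inside $\mathbf O$, using Proposition~\ref{P: subvarieties of O} to bound the subvariety lattice rather than as a route to Lemma~\ref{L: O}.

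\begin{itemize}
\item Your treatment of the commutative and idempotent cases is fine. After it, every variety in $[\mathbf M(xy),\mathbf V]$ is defined by $\Sigma=\{xyx\approx x^2yx,\,xty^2x\approx xtxy^2x,\,\tau_m\}$ together with identities from $\Phi_1\cup\Phi_2$.
\item Modulo $\Sigma$, each $\mathbf h\mathbf p\mathbf c\approx\mathbf h\mathbf q\mathbf c$ in $\Phi_2$ whose alternating tail has length $n>m+2$ is equivalent to the same identity with tail $\mathbf c'=t_1\mathbf a_1\cdots t_{m+2}\mathbf a_{m+2}$. This is the step you must prove.
\item The identity $xyx\approx x^2yx$ lets you take $|\mathbf p|_x=|\mathbf p|_y=|\mathbf q|_x=|\mathbf q|_y=2$.
\item The identity $\tau_m$ (with $xyx\approx x^2yx$) makes $x^2y^2$, $xy^2x$, $(xy)^2$ interchangeable in front of a long alternating tail, and likewise $y^2x^2$, $yx^2y$, $(yx)^2$. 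So if $\mathbf p$ and $\mathbf q$ lie in the same class, the identity already follows from $\Sigma$.
\item Otherwise, pad $\mathbf h\mathbf p\mathbf c'$ to $\mathbf h\mathbf p(xy)^n\mathbf c'$, apply the long identity, and return to $\mathbf h\mathbf q\mathbf c'$ using $xty^2x\approx xtxy^2x$ and $\tau_m$.
\item This leaves only finitely many candidate identities, namely $\Phi_1$ and the members of $\Phi_2$ with tails of length at most $m+2$. Hence $\mathbf V$ is finitely based and $[\mathbf M(xy),\mathbf V]$ is finite.
\item Subvarieties not containing $\mathbf M(xy)$ are commutative or idempotent, hence Cross. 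So $\mathbf V$ is small.
\item Finite generation then comes from local finiteness, via the dual of Lemma~\ref{L: xyx=xyxx is finitely generated}, not from Lemma~\ref{L: O}.
\end{itemize}
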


\begin{proof}
If the variety $\mathbf V$ is commutative or idempotent, then, since $\mathbf B$ violates $xty^2x\approx xtxy^2x$ by Lemma~\ref{L: x=xx}, it follows from Corollary~\ref{C: com id Cross} that $\mathbf V$ is a Cross variety.
Therefore, suppose that $\mathbf V$ is neither commutative nor idempotent, whence $M(xy)\in\mathbf V$ by Lemmas~\ref{L: isoterm} and~\ref{L: xy isoterm}.

According to Lemma~\ref{L: implies O}, $\mathbf V\subseteq\mathbf O$. 
Then, by Proposition~\ref{P: subvarieties of O}, $\mathbf V$ can be defined by $\Sigma:=\{xyx\approx x^2yx,\,xty^2x\approx xtxy^2x,\,\tau_m\}$ together with some identities in $\Phi_1\cup\Phi_2$. 
Consider an arbitrary identity $\mathbf h\mathbf p\mathbf c\approx \mathbf h\mathbf q\mathbf c$ in $\Phi_2$ with $\mathbf c\in \{t_1\mathbf a_1t_2\mathbf a_2\cdots t_n\mathbf a_n, \,t_2\mathbf a_2t_3\mathbf a_3\cdots t_{n+1}\mathbf a_{n+1}\}$ and $n > m+2$. Since $\mathbf V$ satisfies $xyx\approx x^2yx$, we may assume without loss of generality that $|\mathbf p|_x=|\mathbf p|_y=|\mathbf q|_x=|\mathbf q|_y=2$. 
Then $\mathbf p,\mathbf q\in\{x^2y^2,xy^2x,(xy)^2,y^2x^2,yx^2y,(yx)^2\}$. 
Notice also that
\begin{equation}
\label{xi_m+1}
x^2y^2 \biggl(\prod_{i=1}^{m+1} t_i\mathbf a_i\biggr)\stackrel{xyx\approx x^2yx}\approx x^2y^3 \biggl(\prod_{i=1}^{m+1} t_i\mathbf a_i\biggr) \stackrel{\tau_m}\approx xy^2xy \biggl(\prod_{i=1}^{m+1} t_i\mathbf a_i\biggr)\stackrel{xyx\approx x^2yx}\approx (xy)^2\biggl(\prod_{i=1}^{m+1} t_i\mathbf a_i\biggr).
\end{equation}
Hence if $\mathbf p,\mathbf q\in\{x^2y^2,xy^2x,(xy)^2\}$ or $\mathbf p,\mathbf q\in\{y^2x^2,yx^2y,(yx)^2\}$, then $\mathbf h\mathbf p\mathbf c\approx \mathbf h\mathbf q\mathbf c$ follows from $\{xyx\approx x^2yx,\,\tau_m\}$.

Now we assume without any loss of generality that $\mathbf p\in\{x^2y^2,xy^2x,(xy)^2\}$ and $\mathbf q\in\{y^2x^2,yx^2y,(yx)^2\}$.
In this case, $\Sigma\cup\{\mathbf h\mathbf p\mathbf c\approx \mathbf h\mathbf q\mathbf c\}$ implies the identity $\mathbf h\mathbf p\mathbf c^\prime\approx \mathbf h\mathbf q\mathbf c^\prime$ with $\mathbf c^\prime:=t_1\mathbf a_1t_2\mathbf a_2\cdots t_{m+2}\mathbf a_{m+2}$ because
\[
\begin{aligned}
\mathbf h\mathbf p\mathbf c^\prime\stackrel{\{\tau_m,\,\eqref{xi_m+1}\}}\approx{}& \mathbf h(xy)^2\mathbf c^\prime\stackrel{x^2\approx x^3}\approx \mathbf h(xy)^2(xy)^n\mathbf c^\prime \stackrel{\{\tau_m,\,\eqref{xi_m+1}\}}\approx \mathbf h\mathbf p(xy)^n\mathbf c^\prime\\
\stackrel{\mathbf h\mathbf p\mathbf c\approx\mathbf h\mathbf q\mathbf c}\approx{}& \mathbf h\mathbf q(xy)^n\mathbf c^\prime\stackrel{xty^2x\approx xtxy^2x}\approx \mathbf hyx^2y\mathbf c^\prime\stackrel{\{\tau_m,\,\eqref{xi_m+1}\}}\approx \mathbf h\mathbf q\mathbf c^\prime.
\end{aligned}
\]
Evidently, $\mathbf h\mathbf p\mathbf c^\prime\approx \mathbf h\mathbf q\mathbf c^\prime$ implies $\mathbf h\mathbf p\mathbf c\approx \mathbf h\mathbf q\mathbf c$.
Thus, $\mathbf h\mathbf p\mathbf c\approx \mathbf h\mathbf q\mathbf c$ is equivalent modulo $\Sigma$ to $\mathbf h\mathbf p\mathbf c^\prime\approx \mathbf h\mathbf q\mathbf c^\prime$.
It follows that each variety from the interval $[\mathbf M(xy),\mathbf V]$ can be defined by the identities in $\Sigma\cup\Phi_1\cup\overline{\Phi_2}$, where
\[
\overline{\Phi_2}:=\left\{ \; \mathbf h\mathbf p\mathbf c \approx \mathbf h\mathbf q\mathbf c \; \; \left| 
\begin{array}{l} 
\mathbf h \in \{ 1,\, yh\}; \; \alf(\mathbf p)=\alf(\mathbf q)=\{x,y\}; \\[0.05in] 
\mathbf c \in \big\{ 1,\, txy,\, \prod_{i=1}^k (t_i\mathbf a_i), \,\prod_{i=2}^{k+1} (t_i\mathbf a_i) \,\big|\, 1\le k \le m+2\big\}; \\[0.05in] 
1 \leq |\mathbf p|_x = |\mathbf q|_x, \, |\mathbf p|_y = |\mathbf q|_y \leq 2; \\[0.05in] 
|\mathbf h\mathbf p\mathbf c|_x = |\mathbf h\mathbf q\mathbf c|_x,\, |\mathbf h\mathbf p\mathbf c|_y = |\mathbf h\mathbf q\mathbf c|_y \geq 2 
\end{array} 
\right. \right\}.
\]
Since the set $\Sigma\cup\Phi_1\cup\overline{\Phi_2}$ is finite, the variety $\mathbf V$ is finitely based and the interval $[\mathbf M(xy),\mathbf V]$ is finite.
As shown above, any variety satisfying $\Sigma$ and not containing the variety $\mathbf M(xy)$ is a Cross variety and, in particular, small.
This implies that the variety $\mathbf V$ is small.
Now Lemma~\ref{L: xyx=xyxx is finitely generated} applies, yielding that $\mathbf V$ is a Cross variety.
\end{proof}

\section{Certain almost Cross varieties}
\label{sec: almost}

This section presents almost Cross varieties of monoids.
Results related to these varieties that are required later will also be established.

\subsection{The varieties satisfying the identity $xyxz\approx xyxzx$}
\label{subsec: xyxz=xyxzx}

Let
\[
\mathbf F:=\var\{xyxz\approx xyxzx,\,x^2y^2\approx y^2x^2\}.
\]
To describe subvarieties of $\mathbf F$, we need some notation.
For any $k \ge1$ and $1\le m \le k$, put
\[
\mathbf b_{k,m}:=x_{k-1}x_kx_{k-2}x_{k-1}\cdots x_{m-1}x_m.
\]
For brevity, we denote $\mathbf b_{k,1}$ simply by $\mathbf b_k$.
We also set $\mathbf{b}_0 := 1$ for convenience.
We introduce the following four countably infinite series of identities:
\[
\begin{aligned}
\alpha_k:&\enskip x_ky_kx_{k-1}x_ky_k \mathbf b_{k-1} \approx y_kx_kx_{k-1}x_ky_k \mathbf b_{k-1},\\
\beta_k:&\enskip xx_kx\mathbf b_k \approx x_kx^2\mathbf b_k,\\
\gamma_k:&\enskip y_1y_0x_ky_1 \mathbf b_k \approx y_1y_0y_1x_k \mathbf b_k,\\
\delta_k^m:&\enskip y_{m+1}y_mx_ky_{m+1} \mathbf b_{k,m}y_m\mathbf b_{m-1}\approx y_{m+1}y_my_{m+1}x_k\mathbf b_{k,m}y_m\mathbf b_{m-1},
\end{aligned}
\]
where $k\ge1$ and $1\le m\le k$.
The trivial variety of monoids is denoted by $\mathbf T$.
Let $\mathbf D$ denote the variety generated by the monoid
\[
D:=\langle a,b\mid a^2=ba=0,\,ab = a,\,b^2=b\rangle\cup\{1\}=\{a,b,0,1\}.
\]
It is verified in~\cite[the dual of Proposition~3.1(a)]{Edmunds-77} that
\[
\mathbf D=\var\{x^2\approx x^3,\,x^2y\approx xyx,\,x^2y^2\approx y^2x^2\}.
\]
The subvariety lattice of a monoid variety $\mathbf V$ is denoted by $\mathfrak L(\mathbf V)$.

\begin{proposition}[\!{\cite[Proposition~6.1]{Gusev-Vernikov-18}}]
\label{P: F}
\noindent
\begin{itemize}
\item[\textup{(i)}] The variety $\mathbf F$ is a non-finitely generated almost Cross variety.
\item[\textup{(ii)}] The lattice $\mathfrak L(\mathbf F)$ forms the following chain:
\begin{align*}
&\mathbf M(\emptyset)\subset \mathbf M(1)\subset \mathbf M(x)\subset\mathbf M(xy)\subset \mathbf D\\
\subset{}&\mathbf F\{\alpha_1\}\subset\mathbf F\{\beta_1\}\subset\mathbf F\{\gamma_1\}\subset\mathbf F\{\delta_1\}\\
\subset{}&\mathbf F\{\alpha_2\}\subset\mathbf F\{\beta_2\}\subset\mathbf F\{\gamma_2\}\subset\mathbf F\{\delta_2\}\subset\mathbf F\{\delta_2^2\}\\
\rule{6pt}{0pt}\vdots&\\
\subset{}&\mathbf F\{\alpha_k\}\subset\mathbf F\{\beta_k\}\subset\mathbf F\{\gamma_k\}\subset\mathbf F\{\delta_k\}\subset\mathbf F\{\delta_k^2\}\subset\cdots\subset\mathbf F\{\delta_k^k\}\\
\rule{6pt}{0pt}\vdots&\\
\subset{}&\mathbf F.
\end{align*}
\end{itemize}
\end{proposition}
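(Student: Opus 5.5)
The statement is Proposition~\ref{P: F}, cited from~\cite{Gusev-Vernikov-18}, and I would prove part~(ii) first and then deduce~(i) from it.

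For~(ii), I would begin with the bottom of the lattice. The varieties $\mathbf M(\emptyset)$, $\mathbf M(1)$, $\mathbf M(x)$, $\mathbf M(xy)$ and $\mathbf D$ are subvarieties of $\mathbf F$, and they are exactly the subvarieties in which some short word fails to be an isoterm. This follows from Lemma~\ref{L: isoterm} together with Lemma~\ref{L: xy isoterm}, since $\mathbf F$ is neither idempotent nor, apart from its small subvarieties, commutative. Next I would show that every subvariety $\mathbf V$ of $\mathbf F$ not contained in $\mathbf D$ satisfies $x^2y\not\approx xyx$. One then works with words in a canonical form modulo $xyxz\approx xyxzx$ and $x^2y^2\approx y^2x^2$. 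Any word $\mathbf w$ can be reduced so that after the second occurrence of a letter, that letter may be freely inserted. A word is then determined by its linear pattern of first and second occurrences, with $x^2$ blocks commuting.

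The key step is a classification of identities. I would show that every identity $\mathbf u\approx\mathbf v$ of $\mathbf F$ that fails in $\mathbf F$ but holds in $\mathbf D$ is equivalent modulo the axioms of $\mathbf F$ to exactly one of $\alpha_k$, $\beta_k$, $\gamma_k$, $\delta_k^m$. The procedure is as follows:
\begin{itemize}
\item locate the first position where the canonical forms of $\mathbf u$ and $\mathbf v$ differ, necessarily a swap of adjacent first or second occurrences;
\item follow the chain of linked letters $x_k,x_{k-1},\dots$ that "protect" this swap from being undone by $xyxz\approx xyxzx$; this chain is precisely the word $\mathbf b_{k,m}$;
\item the length $k$ of the chain and the kind of swap (two first occurrences, a letter squared, a simple-versus-second occurrence, and so on) then identify the identity as $\alpha_k$, $\beta_k$, $\gamma_k$ or $\delta_k^m$.
\end{itemize}
One must then verify the implications along the chain, for example that $\alpha_k$ implies $\beta_k$ and that $\delta_k^k$ implies $\alpha_{k+1}$, by explicit substitutions that delete or rename letters of $\mathbf b_k$. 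For non-implication in the reverse direction, I would exhibit words that are isoterms or that distinguish the identities, for instance showing that in $\mathbf F\{\beta_k\}$ the left side of $\alpha_k$ is only equivalent to itself. Because each subvariety is then defined by a single identity from a linearly ordered family, $\mathfrak L(\mathbf F)$ is the stated chain.

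For~(i), every proper subvariety of $\mathbf F$ is defined within $\mathbf F$ by a single identity, so it is finitely based. It has finitely many subvarieties, since it has a finite initial segment of the chain. It satisfies $xyx\approx xyx^2$, which is the consequence of $xyxz\approx xyxzx$ obtained by setting $z=x$, so it is finitely generated by Lemma~\ref{L: xyx=xyxx is finitely generated}. Hence every proper subvariety is Cross. Meanwhile $\mathbf F$ itself has infinitely many subvarieties. Being locally finite but not small, it cannot be generated by a finite monoid, since a finitely generated locally finite variety here would force finitely many subvarieties only if Cross; more directly, if $\mathbf F$ were generated by a finite $M$, then $M$ would satisfy some $\delta_k^k$ with $k>|M|$. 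I would prove that last fact via a pigeonhole argument on the values of $x_1,\dots,x_k$.

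The main obstacle is the classification step, namely showing that an arbitrary identity reduces to one of the four series. That requires a careful invariant of the canonical forms, which I would build from the order of first occurrences and the "latest second occurrence" data.
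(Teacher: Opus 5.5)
\textbf{Verdict: there is a genuine gap in part~(ii).} The paper gives no proof of this proposition; it quotes it from \cite{Gusev-Vernikov-18}. So your outline has to stand on its own, and for part~(ii) it does not.

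\textbf{The canonical form is wrong.} The classification step you postpone is the whole content of~(ii), and the canonical form you base it on is not an invariant of $\mathbf F$. Substitute $x\mapsto xy$ in $x^2y^2\approx y^2x^2$ and delete third occurrences using $xyxz\approx xyxzx$. This gives $(xy)^2\approx y^2x^2\approx x^2y^2$ in $\mathbf F$. From this, $\mathbf F$ also satisfies $xtyxy\approx xt(yx)^2\approx xty^2x$ and $xzytxy\approx xzyt(xy)^2\approx xzytyx$. In $(xy)^2\approx x^2y^2$ a second occurrence moves past a first occurrence, and neither side contains a square block. So ``the pattern of first and second occurrences with $x^2$ blocks commuting'' does not determine a word modulo $\mathbf F$. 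Consequently ``the first position where the canonical forms differ'' is not well defined. Your claims that the difference is a single adjacent swap, and that a protecting chain $\mathbf b_{k,m}$ then fixes the identity up to equivalence, restate what must be proved.

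\textbf{What is missing.} You need a correct description of the equational theory of $\mathbf F$, and two arguments built on it:
\begin{itemize}
\item[(a)] for each member $\mathbf W$ of the chain with predecessor $\mathbf W^-$, every identity failing in $\mathbf W$ implies, modulo $\mathbf F$, the identity defining $\mathbf W^-$;
\item[(b)] every identity failing in $\mathbf F$ implies some $\delta_k^k$.
\end{itemize}
Argument~(b) is where $k$ must be extracted from an arbitrary identity, and nothing in your outline does this.

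\textbf{Further errors in~(ii).}
\begin{itemize}
\item Your classification is false as stated. The identity $x^2y\approx xyx$ holds in $\mathbf D$ and fails in $\mathbf F$, but it defines $\mathbf D$, which is none of the $\mathbf F\{\alpha_k\}$, $\mathbf F\{\beta_k\}$, $\mathbf F\{\gamma_k\}$, $\mathbf F\{\delta_k^m\}$.
\item Since $\mathbf D=\mathbf F\{x^2y\approx xyx\}$, your statement about subvarieties not contained in $\mathbf D$ is a tautology. What the chain needs at the bottom is that every subvariety of $\mathbf F$ not containing $\mathbf D$ lies in $\mathbf M(xy)$. This does not follow from Lemmas~\ref{L: isoterm} and~\ref{L: xy isoterm}. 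For instance, it would suffice to show that $D\notin\mathbf X$ forces $yx^2\approx xyx^2$; modulo $xyxz\approx xyxzx$ this gives $xyx\approx x^2y\approx yx^2$.
\item Your separation device fails. No word with a repeated letter is an isoterm for any subvariety of $\mathbf F$, because $\mathbf p x\mathbf q x\mathbf r\approx \mathbf p x\mathbf q x\mathbf r x$ holds there. So ``the left side of $\alpha_k$ is only equivalent to itself'' is false. Strictness of the inclusions needs a finer invariant, such as a set of words stable with respect to the larger variety, as in the proof of Lemma~\ref{L: does not contain H}.
\end{itemize}

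\textbf{Part~(i).} This part is fine once~(ii) is known, apart from two slips.
\begin{itemize}
\item $xyx\approx xyx^2$ comes from $z\mapsto 1$, not $z\mapsto x$.
\item ``Locally finite but not small, hence not finitely generated'' is not a valid inference. Finitely generated monoid varieties can have infinitely many subvarieties, as the introduction notes.
\end{itemize}
Your pigeonhole alternative can be completed with $\kappa_k$, which is equivalent to $\delta_k^k$ by Lemma~\ref{L: kappa_k and delta_k^k}. Identifying any two of $x_1,\dots,x_k$ in $\kappa_k$ lets you delete the later copies of the merged letter, which breaks the chain $\mathbf b_k$. Both sides then collapse to the same word in $\mathbf F$ by repeated use of $(xy)^2\approx x^2y^2$.

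A simpler route: by~(ii), $\mathbf F$ is the join of the strictly increasing chain $\mathbf F\{\delta_k^k\}$. A variety generated by an $n$-element monoid is determined by its $n$-letter identities and has a finite $n$-generated relatively free monoid. Hence it cannot be such a join.
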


\begin{lemma}[\!{\cite[Lemma~3.2]{Gusev-Li-Zhang-25}}]
\label{L: does not contain LRB,F_1}
Let $\mathbf V$ be a variety satisfying the identity $xyxz\approx xyxzx$.
\begin{itemize}
\item[\textup{(i)}] If $\mathbf B_2\nsubseteq\mathbf V$, then $\mathbf V\subseteq\mathbf F$.
\item[\textup{(ii)}] If $\mathbf F\{\alpha_1\}\nsubseteq\mathbf V$, then $\mathbf V\subseteq\mathbf B_2\vee \mathbf M(x)=\var\{x^2 \approx x^3,\,x^2y \approx xyx\}$.
\end{itemize}
\end{lemma}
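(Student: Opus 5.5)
The plan is to work directly with identities, treating the two parts separately but with the same strategy. In each part I first pin down which two-letter identities $\mathbf V$ must satisfy, and then use $xyxz\approx xyxzx$ to propagate them to arbitrary words.

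For part (i), since $\mathbf B_2\nsubseteq\mathbf V$, some identity $\mathbf u\approx\mathbf v$ holds in $\mathbf V$ but fails in $\mathbf B_2$. By Lemma~\ref{L: word problem LRB} this means $\ini(\mathbf u)\ne\ini(\mathbf v)$. I would first reduce to two letters: delete every letter except a pair $x,y$ whose first occurrences appear in opposite orders in $\mathbf u$ and $\mathbf v$. This yields an identity $\mathbf u'\approx\mathbf v'$ in which $\mathbf u'$ begins with $x$ and the first occurrence of $y$ comes after it, while in $\mathbf v'$ the letter $y$ occurs first.

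The identity $xyxz\approx xyxzx$ gives $x^2\approx x^3$ (take $y=z=1$, which yields $x^2\approx x^3$; indeed $xx\cdot x$). It also lets us append occurrences of a letter after it has appeared twice. Multiplying $\mathbf u'\approx\mathbf v'$ on the right by $x^2y^2$ and simplifying with these identities, I expect to obtain an identity of the form $x^2y^2\approx y\,\mathbf w$. From this, together with the word problem consequences, I would derive $x^2y^2\approx y^2x^2$. Consequently $\mathbf V\subseteq\mathbf F$.

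The main obstacle is the simplification: I need to show that any word beginning with $x$, after the letters have been doubled, collapses to a canonical form $x^2y^2$ modulo $xyxz\approx xyxzx$. I would do this by induction on length. The fact to exploit is that once $x$ has occurred twice, $x$ may be inserted or deleted freely after any later letter.

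For part (ii), I would take an identity of $\mathbf V$ failing in $\mathbf F\{\alpha_1\}$ and use the chain in Proposition~\ref{P: F}(ii) together with part (i). Writing $\mathbf W:=\mathbf B_2\vee\mathbf M(x)$, one has either $\mathbf V\subseteq\mathbf W$ or $\mathbf V$ contains $\mathbf M(xy)$ and $\mathbf B_2$ in a non-trivial way. I would verify the equational description of $\mathbf W$ via Lemma~\ref{L: word problem LRB}, by characterizing its identities as "same initial part and same set of multiple letters".

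Then I would argue that if $\mathbf V$ satisfied $x^2\approx x^3$ and $x^2y\approx xyx$, it would lie in $\mathbf W$. Otherwise, a failing identity can be reduced, by substitutions and deletions of letters, to one violated by $\mathbf F\{\alpha_1\}$, namely $xyx\approx x^2y$. This would show that $\mathbf F\{\alpha_1\}\subseteq\mathbf V$, establishing the contrapositive.
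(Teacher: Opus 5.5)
Your part (i) is sound in substance, though roundabout. Since $xyxz\approx xyxzx$ lets you delete every occurrence of a letter after its second one, the substitution $x\mapsto x^2$, $y\mapsto y^2$ turns your two-letter identity $\mathbf u'\approx\mathbf v'$ directly into $x^2y^2\approx y^2x^2$. Multiplying by $x^2y^2$ on the right does not in general give $x^2y^2$ on the left; for example, $xy\cdot x^2y^2$ reduces to $xyxy$. You should also note that if $\alf(\mathbf u)\ne\alf(\mathbf v)$, then $\mathbf V$ is trivial.

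Part (ii), however, has a genuine gap. Your concluding inference is backwards. That $\mathbf V$ violates $x^2y\approx xyx$, an identity $\mathbf F\{\alpha_1\}$ also violates, says nothing about $\mathbf F\{\alpha_1\}\subseteq\mathbf V$. Containment requires that \emph{every} identity of $\mathbf V$ hold in $\mathbf F\{\alpha_1\}$. Equivalently, each identity failing in $\mathbf F\{\alpha_1\}$ must imply $x^2y\approx xyx$ modulo $xyxz\approx xyxzx$, and that is exactly the content of (ii), so nothing has been proved. Your dichotomy ``$\mathbf V\subseteq\mathbf W$ or $\mathbf V$ contains $\mathbf M(xy)$ and $\mathbf B_2$ non-trivially'' is not a dichotomy either, since $\mathbf W$ already contains $\mathbf M(xy)\vee\mathbf B_2$. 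Finally, Proposition~\ref{P: F}(ii) concerns only subvarieties of $\mathbf F$, and part (i) places $\mathbf V$ inside $\mathbf F$ only when $\mathbf B_2\nsubseteq\mathbf V$. That case is easy ($\mathbf V\subseteq\mathbf D\subseteq\mathbf W$), and you give no argument when $\mathbf B_2\subseteq\mathbf V$.

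The missing idea is to apply the chain to $\mathbf V\wedge\mathbf F=\mathbf V\{x^2y^2\approx y^2x^2\}$. This variety does not contain $\mathbf F\{\alpha_1\}$, so it lies in $\mathbf D$. Hence $x^2y\approx xyx$ is derivable from the identities of $\mathbf V$ together with $x^2y^2\approx y^2x^2$; it remains to lift this back to $\mathbf V$.

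If $M(x)\in\mathbf V$, identities of $\mathbf V$ preserve the alphabet and the set of simple letters. So every word in such a derivation starting at $x^2y$ has $y$ simple. Any application of $x^2y^2\approx y^2x^2$ must then substitute powers of $x$ for both of its letters, so it is trivial. The derivation therefore uses only identities of $\mathbf V$, and $\mathbf V$ itself satisfies $x^2y\approx xyx$.

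If $M(x)\notin\mathbf V$, then $\mathbf V$ satisfies $x\approx x^2$. Putting $y=1$ in $xyxz\approx xyxzx$ gives $xz\approx xzx$, so $\mathbf V\subseteq\mathbf B_2$. Either way $\mathbf V\subseteq\mathbf W$.

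The paper itself does not prove this lemma; it quotes it from the literature, so there is no in-paper route to compare against.
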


\begin{lemma}[\!{\cite[Lemma~4.2]{Gusev-Li-Zhang-25}}]
\label{L: kappa_k and delta_k^k}
For any $k \ge1$, the identities $\delta_k^k$ and
\[
\kappa_k:\enskip xx_kx\mathbf b_k \approx x^2x_k\mathbf b_k
\] 
are equivalent modulo $xyxz\approx xyxzx$.
\end{lemma}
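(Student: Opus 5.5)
The direction $\delta_k^k\Rightarrow\kappa_k$ should be a plain substitution, while $\kappa_k\Rightarrow\delta_k^k$ needs a substitution into $\kappa_k$ followed by rewriting with $xyxz\approx xyxzx$; the second direction is where the difficulty lies.

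First I will write out $\delta_k^k$ explicitly. For $m=k$ we have $\mathbf b_{k,k}=x_{k-1}x_k$, so $\delta_k^k$ reads
\[
y_{k+1}y_kx_ky_{k+1}x_{k-1}x_ky_k\mathbf b_{k-1}\approx y_{k+1}y_ky_{k+1}x_kx_{k-1}x_ky_k\mathbf b_{k-1}.
\]
The word $\mathbf b_k$ satisfies $\mathbf b_k=x_{k-1}x_k\mathbf b_{k-1}$, and this is the only structural fact about $\mathbf b_k$ I expect to need.

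For $\delta_k^k\Rightarrow\kappa_k$, substitute $y_{k+1}\mapsto x$ and $y_k\mapsto 1$ in $\delta_k^k$. The left side becomes $xx_kxx_{k-1}x_k\mathbf b_{k-1}=xx_kx\mathbf b_k$ and the right side becomes $x^2x_k\mathbf b_k$. This is exactly $\kappa_k$, and it does not use $xyxz\approx xyxzx$ at all.

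For $\kappa_k\Rightarrow\delta_k^k$ modulo $xyxz\approx xyxzx$, I will use one rewriting rule. Once a letter $x$ has occurred twice in a word $\mathbf w=\mathbf w_1\mathbf w_2$, say $\mathbf w_1=\mathbf p x\mathbf q x$, occurrences of $x$ may be freely inserted into or deleted from $\mathbf w_2$. This follows by substituting into $xyxz\approx xyxzx$ and multiplying on the left.

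The plan is to substitute $x\mapsto y_{k+1}y_k$ in $\kappa_k$, and also to put $x_{k-2}\mapsto y_kx_{k-2}$ when $k\ge2$; when $k=1$ we have $\mathbf b_0=1$, so instead multiply on the right by $y_k$. This gives
\[
y_{k+1}y_kx_ky_{k+1}y_k\,x_{k-1}x_ky_k\mathbf b_{k-1}\approx (y_{k+1}y_k)^2x_k\,x_{k-1}x_ky_k\mathbf b_{k-1}.
\]
On the right-hand side both $y_{k+1}$ and $y_k$ already occur twice before $x_k$. So by the rule, the word can be normalized to $y_{k+1}y_ky_{k+1}x_kx_{k-1}x_ky_k\mathbf b_{k-1}$ (as I read it, this normalization also needs the fragment $(y_{k+1}y_k)^2$ to reduce to $y_{k+1}y_ky_{k+1}$, which the rule alone does not obviously give). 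That is the right side of $\delta_k^k$.

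On the left-hand side the occurrence of $y_k$ directly after the second $y_{k+1}$ is superfluous compared with the left side of $\delta_k^k$. The rule only lets us delete occurrences that come after the second one, whereas here it is the second occurrence itself that must go. This is the step I expect to be the main obstacle.

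My first attempt will be to vary the substitution: for instance $x\mapsto y_{k+1}$ together with left multiplication by $y_{k+1}y_k$. I will then look for an intermediate word in which both sides can be brought, by the rule, to the two sides of $\delta_k^k$, possibly applying $\kappa_k$ twice with different substitutions and chaining. I will also check whether the occurrence of $x_k$ inside $\mathbf b_{k,k}$ lets us use a substituted form of $xyxz\approx xyxzx$ to cancel the extra $y_k$. Failing that, the fallback is to argue semantically: check, via the chain in Proposition~\ref{P: F} and Lemma~\ref{L: does not contain LRB,F_1}, that $\kappa_k$ and $\delta_k^k$ cut out the same subvariety of every variety satisfying $xyxz\approx xyxzx$.
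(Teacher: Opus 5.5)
Your derivation of $\delta_k^k\Rightarrow\kappa_k$ (via $y_{k+1}\mapsto x$, $y_k\mapsto 1$) is correct. The converse is not established, however, and your substitution $x\mapsto y_{k+1}y_k$ fails on both sides, not only the left.

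Your rewriting rule is in fact a complete description of $xyxz\approx xyxzx$. Modulo this identity, a word is equivalent to the word obtained by keeping only the first two occurrences of each letter. This reduced word is unchanged by any application of the identity in context, so it decides every consequence. After $x\mapsto y_{k+1}y_k$, the second occurrence of $y_k$ precedes $x_{k-1}x_k$ on both sides. On both sides of $\delta_k^k$ it follows $\mathbf b_{k,k}$. So $(y_{k+1}y_k)^2x_k\mathbf b_{k,k}\cdots$ genuinely does not reduce to the right side of $\delta_k^k$, and no use of the rule can fix this.

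Your fallbacks do not help either:
\begin{itemize}
\item The variant $x\mapsto y_{k+1}$ with left factor $y_{k+1}y_k$ gives an identity that is trivial modulo $xyxz\approx xyxzx$.
\item The semantic fallback fails because, by Lemma~\ref{L: word problem LRB}, $\mathbf B_2$ satisfies $xyxz\approx xyxzx$, $\kappa_k$ and $\delta_k^k$. So both varieties contain $\mathbf B_2$ and are not contained in $\mathbf F$. Hence Proposition~\ref{P: F} and Lemma~\ref{L: does not contain LRB,F_1}(i) say nothing about them.
\end{itemize}

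The missing idea is to pass through the intermediate word
\[
\mathbf w:=y_{k+1}^2y_kx_k\mathbf b_{k,k}y_k\mathbf b_{k-1}
\]
using two different instances of $\kappa_k$. In both, $y_k$ sits inside the image of $x_k$, so its second copy lands right after $\mathbf b_{k,k}$. The image of $x_{k-1}$ supplies the genuine occurrences of the letter $x_k$. All surplus copies of $x_k$ are third-or-later occurrences, which $xyxz\approx xyxzx$ erases.

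\emph{First instance.} Take $x\mapsto y_{k+1}$, $x_k\mapsto y_kx_k$, $x_{k-1}\mapsto x_{k-1}x_k$, other letters fixed. The two sides of $\kappa_k$ become $y_{k+1}y_kx_ky_{k+1}x_{k-1}x_ky_kx_k$ and $y_{k+1}^2y_kx_kx_{k-1}x_ky_kx_k$. Each is followed by the image of $\mathbf b_{k-1}$, which differs from $\mathbf b_{k-1}$ only by extra copies of $x_k$. They reduce to the left side of $\delta_k^k$ and to $\mathbf w$.

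\emph{Second instance.} Take $x\mapsto y_{k+1}$, $x_k\mapsto y_k$, $x_{k-1}\mapsto x_kx_{k-1}x_k$. The two sides become $y_{k+1}y_ky_{k+1}x_kx_{k-1}x_ky_k$ and $y_{k+1}^2y_kx_kx_{k-1}x_ky_k$, again followed by such an image of $\mathbf b_{k-1}$. They reduce to the right side of $\delta_k^k$ and to $\mathbf w$.

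This works uniformly for every $k\ge1$. For $k=1$, the letter $x_0$ carries the extra $x_1$ and $\mathbf b_0=1$. So no separate case and no substitution into $\mathbf b_{k-1}$ is needed.
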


We introduce six more countably infinite series of identities:
\[
\begin{aligned}
\varepsilon_{k-1}:&\enskip y_kx_{k-1}xy_kx\mathbf b_{k-1} \approx y_kx_{k-1}y_kx^2\mathbf b_{k-1},\\
\zeta_{k-1}:&\enskip x y_0 y x_{k-1} xy \mathbf b_{k-1} \approx x y_0 y x_{k-1} yx \mathbf b_{k-1},\\
\lambda_k^m:&\enskip x y_m y x_k xy \mathbf b_{k,m}y_m\mathbf b_{m-1}\approx x y_m y x_k yx \mathbf b_{k,m}y_m\mathbf b_{m-1},\\
\eta_{k-1}:&\enskip x y_1 y y_0 xy x_{k-1}y_1\mathbf b_{k-1}\approx x y_1 y y_0 yxx_{k-1}y_1\mathbf b_{k-1},\\
\mu_k^m:&\enskip x y_{m+1} y y_m xy x_ky_{m+1}\mathbf b_{k,m}y_m\mathbf b_{m-1}\approx x y_{m+1} y y_m yxx_k y_{m+1}\mathbf b_{k,m}y_m\mathbf b_{m-1},\\
\nu_{k-1}:&\enskip x x_{k-1}yzxy h^2z \mathbf b_{k-1}\approx x x_{k-1}yzyx h^2z \mathbf b_{k-1},
\end{aligned}
\]
where $k \ge1$ and $1\le m \le k$.

\begin{lemma}
\label{L: O_k}
If a variety $\mathbf V$ of monoids satisfies the identities
\begin{align}
\label{xyxz=xyxzx}
xyxz\approx {}&xyxzx,\\
\label{xzytxyhhz=xzytyxhhz}
xzytxyh^2z\approx {}&xzytyxh^2z
\end{align}
and $\kappa_k$ for some $k\ge1$, then $\mathbf V$ is a Cross variety.
\end{lemma}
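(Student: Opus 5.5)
We will show that the three hypotheses together force $\mathbf V$ into the setting of Lemma~\ref{L: O-2} or Lemma~\ref{L: O}, or else into a proper subvariety of $\mathbf F$. The trivial cases come first. If $\mathbf V$ is commutative or idempotent, then $\mathbf V$ is Cross by Corollary~\ref{C: com id Cross}, because $\mathbf B$ violates \eqref{xzytxyhhz=xzytyxhhz}. This violation follows from Lemma~\ref{L: x=xx}: deleting $z$ shows that the $\ell$-parts differ, since $xytxy$ and $xytyx$ are distinguished in $\mathbf B$ by their $r$-parts. So by Lemma~\ref{L: xy isoterm} we may assume $M(xy)\in\mathbf V$. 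The identity \eqref{xyxz=xyxzx} implies $xyx\approx xyx^2$ (take $z=x$, or $z=1$ together with $x^2 \approx x^3$). Hence by Lemma~\ref{L: xyx=xyxx is finitely generated} it suffices to prove that $\mathbf V$ is finitely based and small.

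Next we split according to whether $\mathbf B_2\subseteq\mathbf V$.

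\emph{Case $\mathbf B_2\nsubseteq\mathbf V$.} By Lemma~\ref{L: does not contain LRB,F_1}(i) we have $\mathbf V\subseteq\mathbf F$. By Lemma~\ref{L: kappa_k and delta_k^k}, $\kappa_k$ is equivalent to $\delta_k^k$ modulo \eqref{xyxz=xyxzx}, so $\mathbf V\subseteq\mathbf F\{\delta_k^k\}$. By Proposition~\ref{P: F}(ii), the latter is a member of the chain with only finitely many varieties below it, and each of those is defined within $\mathbf F$ by a single identity. Therefore $\mathbf V$ is small and finitely based, hence Cross.

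\emph{Case $\mathbf B_2\subseteq\mathbf V$.} This is the main case and the main obstacle. Here we want to exploit \eqref{xzytxyhhz=xzytyxhhz} and $\kappa_k$. The plan is to find a finite list of identities, depending on $k$, that relativize the subvariety lattice of $\mathbf V$ the way Proposition~\ref{P: subvarieties of O} does for $\mathbf O$.

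First try to reduce to the dual setting of $\mathbf O$. Applying Lemmas~\ref{L: implies O} and~\ref{L: O-2} to the dual of $\mathbf V$ requires $xyx\approx xyx^2$ and $xy^2tx\approx xy^2xtx$; the former we have. The latter should follow from \eqref{xyxz=xyxzx}: in $xy^2tx$, the prefix $x y y$ has $y$ repeated, and \eqref{xyxz=xyxzx} with $x\mapsto y$ lets us insert copies of $y$ after $y y$. Inserting an $x$ after $y^2$, however, needs the first factor to be $x$-periodic, which it is not. So we will have to derive $xy^2tx\approx xy^2xtx$ from \eqref{xzytxyhhz=xzytyxhhz} by suitable substitutions such as $z\mapsto 1$ and $h\mapsto 1$; if that fails, we will establish the dual $\tau_m$-type identity directly from $\kappa_k$. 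Concretely, $\kappa_k$ with the substitution $x_i\mapsto$ alternating letters should yield the dual of $\tau_m$ for $m=k+1$, or the dual of $\sigma_m$ when combined with \eqref{xzytxyhhz=xzytyxhhz}, the latter allowing $xy\leftrightarrow yx$ before a suffix containing $h^2z$ as in the $\mathbf b$-chain.

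Once $\mathbf V$, or its dual, lies in $\mathbf O$ and satisfies $\sigma_m$, $\tau_m$ or $x^2yzx\approx x^2yxzx$, Lemma~\ref{L: O} or Lemma~\ref{L: O-2} finishes the proof. If the reduction to $\mathbf O$ fails, the fallback is a direct argument. We would show that, modulo \eqref{xyxz=xyxzx}, \eqref{xzytxyhhz=xzytyxhhz} and $\kappa_k$, every identity is equivalent to one of bounded length in a finite family. The family would be indexed like the series $\varepsilon,\zeta,\lambda,\eta,\mu,\nu$, truncated at index about $k$, with longer members collapsing via $\kappa_k$ exactly as in the proof of Lemma~\ref{L: O-2}. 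This would give finite basedness and finitely many subvarieties. Verifying this truncation is the step we expect to be hardest.
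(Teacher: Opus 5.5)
\paragraph{Where the proposal breaks down.} Your preliminary reductions and the case $\mathbf B_2\nsubseteq\mathbf V$ are correct and match the paper. The gap is the case $\mathbf B_2\subseteq\mathbf V$, which carries all of the content. There, no reduction to $\mathbf O$ or $\overleftarrow{\mathbf O}$ is available in general.

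First note that \eqref{xyxz=xyxzx} already implies $x^2yzx\approx x^2yxzx$. So Lemma~\ref{L: O} settles every such $\mathbf V\subseteq\mathbf O$ at once, and the lemma is really about varieties outside $\mathbf O$. These exist. Take $\mathbf V=\mathbf B_2\vee\mathbf F\{\kappa_1\}$; it satisfies all hypotheses with $k=1$:
\begin{itemize}
\item $\mathbf B_2$ satisfies them by Lemma~\ref{L: word problem LRB};
\item $\mathbf F$ even satisfies $xzytxy\approx xzytx^2y^2\approx xzyty^2x^2\approx xzytyx$, which gives \eqref{xzytxyhhz=xzytyxhhz}.
\end{itemize}

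This $\mathbf V$ is not contained in $\mathbf O$. By Proposition~\ref{P: F}(ii) and Lemma~\ref{L: kappa_k and delta_k^k}, $\mathbf F\{\kappa_1\}=\mathbf F\{\delta_1\}$ properly contains $\mathbf F\{\gamma_1\}$. Modulo \eqref{xyxz=xyxzx}, $\gamma_1$ is equivalent to the defining identity $xzxyxty\approx xzyxty$ of $\mathbf O$. Hence $\mathbf V\nsubseteq\mathbf O$.

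It is not contained in $\overleftarrow{\mathbf O}$ either. The set $\{xtx^n\mid n\ge1\}$ is closed under the defining identities $xyxz\approx xyxzx$, $x^2y^2\approx y^2x^2$, $\kappa_1$ of $\mathbf F\{\kappa_1\}$, since $t$ is simple and only one $x$ precedes it. So $\mathbf V$ violates $xtx\approx x^2tx$. That identity follows from each of $xyx\approx x^2yx$, $xy^2tx\approx xy^2xtx$, and the identity $xyxtyzx\approx xytyzx$ of $\overleftarrow{\mathbf O}$.

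Consequently none of Lemmas~\ref{L: implies O}, \ref{L: O-2}, \ref{L: O} or their duals applies to $\mathbf V$. No manipulation of \eqref{xzytxyhhz=xzytyxhhz} and $\kappa_k$ can produce $xy^2tx\approx xy^2xtx$.

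\paragraph{What the fallback is missing.} Your fallback only restates what must be proved, and that is where the paper's real work lies. The paper works with an arbitrary subvariety $\mathbf X$ and uses two ingredients your proposal lacks.
\begin{itemize}
\item If $\mathbf B_2\subseteq\mathbf X$ but $\mathbf F\{\alpha_1\}\nsubseteq\mathbf X$, then $\mathbf X\subseteq\mathbf B_2\vee\mathbf M(x)$ by Lemma~\ref{L: does not contain LRB,F_1}(ii). That variety has a known finite lattice.
\item If $\mathbf B_2\vee\mathbf F\{\alpha_1\}\subseteq\mathbf X$, the paper first derives $xtyxy\approx xty^2x$ from \eqref{xyxz=xyxzx} and $\kappa_k$. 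It then invokes an external classification (Corollary~4.18 of Gusev, Li and Zhang). Every variety containing $\mathbf B_2\vee\mathbf F\{\alpha_1\}$ and satisfying \eqref{xyxz=xyxzx}, \eqref{xzytxyhhz=xzytyxhhz} and $xtyxy\approx xty^2x$ is defined by $\Psi_0$ together with members of an explicit infinite family $\Psi$. This family consists of $(xy)^2\approx x^2y^2$ and the series $\gamma_s$, $\delta_s^t$, $\varepsilon_{s-1}$, $\zeta_{s-1}$, $\lambda_s^t$, $\eta_{s-1}$, $\mu_s^t$, $\nu_{s-1}$.
\end{itemize}
Only then does the truncation become a concrete computation. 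Using $\kappa_k$, one checks that all but finitely many members of $\Psi$ either already hold in $\mathbf V$ or cut out the same subvariety of $\mathbf V$ as a member of index at most $k+1$. Without that classification, or a proof of it, your claim that every subvariety is defined by identities of bounded length is unsupported.
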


\begin{proof}
Consider an arbitrary subvariety $\mathbf X$ of the variety $\mathbf V$.
It the following, we will show that $\mathbf X$ can be given by the identities in
\[
\begin{aligned}
&\Psi_0:=\{xyxz\approx xyxzx,\, xzytxyh^2z\approx xzytyxh^2z,\,\kappa_k\},\\
&\Psi_1:=\{x\approx y, x\approx x^2, x^2\approx x^3, xy\approx yx, x^2y\approx xyx, x^2y\approx yx^2, x^2y^2\approx y^2x^2,xy\approx xyx\},\\
&\Psi_2:=\{\alpha_s,\,\beta_s,\,\gamma_s,\,\delta_s^t,\,\varepsilon_{s-1},\,\zeta_{s-1},\,\lambda_s^t,\,\eta_{s-1},\,\mu_k^t\mid 1\le t \le s \le k+1\}.
\end{aligned}
\]
Since the variety $\mathbf X$ is arbitrary and the set $\Psi_0\cup\Psi_1\cup\Psi_2$ is finite, the variety $\mathbf V$ must be finitely based and small.
Now Lemma~\ref{L: xyx=xyxx is finitely generated} applies, yielding that $\mathbf V$ is a Cross variety.

\smallskip

If $\mathbf B_2\nsubseteq\mathbf X$, then $\mathbf X\subseteq\mathbf F$ by~\cite[Lemma~3.2(i)]{Gusev-Li-Zhang-25}. 
In this case, since $\kappa_k$ holds in $\mathbf X$, it follows from Proposition~\ref{P: F} and Lemma~\ref{L: kappa_k and delta_k^k} that $\mathbf X$ may be defined by the identities in $\Psi_0 \cup \Psi_1 \cup \Psi_2$. 

If $\mathbf F\{\alpha_1\}\nsubseteq\mathbf X$, then $\mathbf X\subseteq\mathbf B_2\vee\mathbf M(x)$ by~\cite[Lemma~3.2(ii)]{Gusev-Li-Zhang-25}. 
In this case, it follows from the description of the subvariety lattice $\mathfrak L(\mathbf B_2\vee\mathbf M(x))$ obtained in~\cite[Proposition~4.1]{Lee-12b} that $\mathbf X$ may be defined by the identities in $\Psi_0\cup\Psi_1$.

Now assume that $\mathbf B_2 \vee \mathbf F\{\alpha_1\}\subseteq\mathbf X$.
Since
\[
xtyxy\stackrel{\eqref{xyxz=xyxzx}}\approx xtyxy x^{2k} \stackrel{\kappa_k}\approx xty^2x^{2k+1}\stackrel{\eqref{xyxz=xyxzx}}\approx xty^2x,
\]
we see that $\mathbf V$ and so $\mathbf X$ satisfy the identity $xtyxy\approx xty^2x$.
It is verified in~\cite[Corollary~4.18]{Gusev-Li-Zhang-25} that every variety of monoids satisfying the identities
\[
xyxz\approx xyxzx,\quad xzytxyh^2z\approx xzytyxh^2z,\quad xtyxy\approx xty^2x
\]
and containing $\mathbf B_2 \vee \mathbf F\{\alpha_1\}$ may be given by the identities in $\Psi_0$ together with the identities in 
\[
\Psi := \{(xy)^2\approx x^2y^2,\,\gamma_k,\,\delta_k^m,\,\varepsilon_{k-1},\,\zeta_{k-1},\,\lambda_k^m,\,\eta_{k-1},\,\mu_k^m,\,\nu_{k-1} \mid k \ge1, \ 1 \le m \le k \}.
\]
In particular, $\mathbf X$ is defined by a subset of $\Psi_0\cup\Psi$.
We are going to refine this set of identities and verify that $\mathbf X$ may be defined by identities in $\Psi_0\cup\Psi_2$.

First, notice that $\mathbf B_2$ and so $\mathbf X$ violate $\alpha_i$ and $\beta_i$ for each $i\ge1$ by Lemma~\ref{L: word problem LRB}, while the identity $\nu_{s-1}$ with $s\ge1$ holds in $\mathbf V$ because
\[
\begin{aligned}
&x x_{s-1}yzxy h^2z \mathbf b_{s-1}\stackrel{\eqref{xyxz=xyxzx}}\approx x x_{s-1}y(zx)y h^2(zx) h^{4k-4} \mathbf b_{s-1} \stackrel{\kappa_k}\approx x x_{s-1}y^2(zx) h^2(zx) h^{4k-4} \mathbf b_{s-1}\\
\stackrel{\eqref{xyxz=xyxzx}}\approx {}&x x_{k-1}y^2z(x h^2)z (x h^2)^{2k-2} \mathbf b_{k-1} \stackrel{\kappa_k}\approx x x_{k-1}yzy(x h^2)z (x h^2)^{2k-2} \mathbf b_{k-1} \stackrel{\eqref{xyxz=xyxzx}}\approx x x_{k-1}yzyx h^2z \mathbf b_{k-1}.
\end{aligned}
\]
Take an arbitrary $r>k+1$.
It is proved in~\cite[Lemma 4.3(ii),(iii)]{Gusev-Li-Zhang-25} that the inclusions
\[
\begin{aligned}
\var\{xyxz\approx xyxzx, \gamma_{k+1}\}\subset{}&\var\{xyxz\approx xyxzx,\,\delta_p^s\}\subset\var\{xyxz\approx xyxzx,\,\delta_{p+q}^p\}\\
\subset{}&\var\{xyxz\approx xyxzx,\,\varepsilon_p\}\subset\var\{xyxz\approx xyxzx,\,(xy)^2\approx x^2y^2\}
\end{aligned}
\]
hold for each $p,q\ge1$ and $1\le s\le p$.
This fact and Lemma~\ref{L: kappa_k and delta_k^k} imply that the variety $\mathbf V$ satisfies the identities $(xy)^2\approx x^2y^2$, $\varepsilon_{r-1}$ and $\delta_r^t$ with $t\ge k$.
Further, $\mathbf V$ satisfy $\zeta_r$ because
\[
\begin{aligned}
&x y_0 y x_r xy \mathbf b_r \stackrel{\eqref{xyxz=xyxzx}}\approx x y_0 y (x_r x)y x_{r-1} (x_rx) \mathbf b_{r-1} \stackrel{\kappa_k}\approx x y_0 y^2 (x_r x) x_{r-1} (x_rx)\mathbf b_{r-1} \\
\stackrel{\eqref{xyxz=xyxzx}}\approx {}&x y_0 y^2 x_r (x x_{r-1}) x_r x_{r-2} (xx_{r-1})\mathbf b_{r-2} \stackrel{\kappa_k}\approx x y_0 y x_r y (x x_{r-1}) x_r x_{r-2} (xx_{r-1})\mathbf b_{r-2}\\
\stackrel{\eqref{xyxz=xyxzx}}\approx {}&x y_0 y x_{k-1} yx \mathbf b_{k-1}.
\end{aligned}
\]
By a similar argument, we can show that $\{\lambda_r^t\mid1 \le t \le r\}$ is satisfied by $\mathbf V$.
Since
\[
\begin{aligned}
&x y_1 y y_0 xy x_ry_1\mathbf b_r\stackrel{\eqref{xyxz=xyxzx}}\approx x y_1 y y_0 xy x_r(y_1x_{r-1})x_rx_{r-2}(y_1x_{r-1})\mathbf b_{r-2}\\
\stackrel{\kappa_k}\approx {}&x y_1 y y_0 xy x_r^2(y_1x_{r-1})x_{r-2}(y_1x_{r-1})\mathbf b_{r-2}\stackrel{\eqref{xzytxyhhz=xzytyxhhz}}\approx x y_1 y y_0 yx x_r^2(y_1x_{r-1})x_{r-2}(y_1x_k)\mathbf b_{r-2}\\
\stackrel{\kappa_k}\approx {}&x y_1 y y_0 yx x_r(y_1x_{r-1})x_rx_{r-2}(y_1x_{r-1})\mathbf b_{r-2}\stackrel{\eqref{xyxz=xyxzx}}\approx x y_1 y y_0 yx x_ry_1\mathbf b_r
\end{aligned}
\]
the identity $\eta_r$ holds in $\mathbf V$.
By a similar argument, we can show that $\mathbf V$ satisfies $\{\mu_r^t\mid 1\le t\le r\}$.
Finally, since 
\[
\begin{aligned}
&y_1y_0x_{k+1}y_1 \mathbf b_{k+1}\stackrel{\eqref{xyxz=xyxzx}}\approx y_1y_0x_{k+1}(y_1x_k)x_{k+1}x_{k-1}(y_1x_k) \mathbf b_k \stackrel{\kappa_k}\approx y_1y_0x_{k+1}^2(y_1x_k)x_{k-1}(y_1x_k) \mathbf b_k\\
\stackrel{\eqref{xyxz=xyxzx}}\approx {}&y_1y_0x_{k+1}^2y_1^{2r} \mathbf b_{k+1}\stackrel{\kappa_k}\approx y_1y_0x_{k+1}y_1^2x_{k+1}y_1^{2r-2}x_{k}\mathbf b_{k}\stackrel{\gamma_r}\approx y_1y_0y_1x_{k+1}y_1x_{k+1}y_1^{2r-2}x_{k}\mathbf b_{k}\\
\stackrel{\eqref{xyxz=xyxzx}}\approx {}&y_1y_0y_1x_{k+1}^2x_{k}\mathbf b_{k}\stackrel{\kappa_k}\approx y_1y_0y_1x_{k+1}x_{k}x_{k+1} \mathbf b_{k}\stackrel{\eqref{xyxz=xyxzx}}\approx y_1y_0y_1x_{k+1} \mathbf b_{k+1},
\end{aligned}
\]
we see that $\mathbf V\{\gamma_r\}$ implies $\gamma_{k+1}$.
Since $\gamma_r$ is a consequence of $\{xyxz\approx xyxzx, \gamma_{k+1}\}$, it follows that $\mathbf V\{\gamma_r\}=\mathbf V\{\gamma_{k+1}\}$.
By a similar argument, we can show that $\mathbf V\{\delta_r^t\}=\mathbf V\{\delta_{k+1}^t\}$ for each $1\le t< k$.
Since $r$ is arbitrary, we have proved that $\mathbf X$ can be given by a subset of $\Psi_0\cup\Psi_2$.
Lemma~\ref{L: O_k} is thus proved.
\end{proof}

For an arbitrary $n \ge1$, we denote by $S_n$ the full symmetric group on the set $\{1,\dots,n\}$.
For arbitrary $n \ge1$ and $\pi,\pi^\prime\in S_{2n}$, we put
\[
\begin{aligned}
\mathbf w_n[\pi,\pi^\prime]&:=\biggl(\prod_{i=1}^n x_it_i\biggr) \,x\, \biggl(\prod_{i=1}^{2n} z_i\biggr) \,y\, \biggl(\prod_{i=n+1}^{2n} t_ix_i\biggr)\,txy\, \biggl(\prod_{i=1}^{2n} x_{i\pi}z_{i\pi^\prime}\biggr),\\
\mathbf w_n^\prime[\pi,\pi^\prime]&:=\biggl(\prod_{i=1}^n x_it_i\biggr) \,x\, \biggl(\prod_{i=1}^{2n} z_i\biggr) \,y\, \biggl(\prod_{i=n+1}^{2n} t_ix_i\biggr)\,tyx\, \biggl(\prod_{i=1}^{2n} x_{i\pi}z_{i\pi^\prime}\biggr).
\end{aligned}
\]
Let
\[
\mathbf G := \{xyxz\approx xyxzx,\,\kappa_1,\, \mathbf w_n[\pi,\pi^\prime] \approx \mathbf w_n^\prime[\pi,\pi^\prime] \mid n \ge1, \ \pi,\pi^\prime\in S_{2n}\}.
\]

\begin{lemma}[\!{\cite[Lemma~5.3]{Gusev-Li-Zhang-25}}]
\label{L: V does not contain G}
Let $\mathbf V$ be a variety satisfying $xyxz\approx xyxzx$. 
If $\mathbf G\nsubseteq\mathbf V$, then the identity $xzytxyh^2z\approx xzytyxh^2z$ holds in $\mathbf V$.
\end{lemma}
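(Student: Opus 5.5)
The plan is to prove the contrapositive. Assume $\mathbf V$ satisfies $xyxz\approx xyxzx$ but violates
\[
xzytxyh^2z\approx xzytyxh^2z,
\]
and show that every identity defining $\mathbf G$ holds in $\mathbf V$, i.e.\ $\mathbf G\subseteq\mathbf V$. The expected route is to show that certain words are isoterms for $\mathbf V$ and then apply Lemma~\ref{L: isoterm}. In fact one wants $\mathbf G$ to be generated by a monoid of the form $M(\mathscr W)$, say $\mathbf G=\mathbf M(\mathscr W)\vee\mathbf K$ with $\mathbf K$ a small piece such as $\mathbf B_2$ or $\mathbf F\{\alpha_1\}$. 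I would first establish that $\mathbf G$ is generated by $M(xzytxyh^2z)$, or a closely related word, together with the appropriate small varieties.

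\emph{Step 1.} Show that $\mathbf V$ is neither commutative nor idempotent. The identity in question holds in both $\mathbf{Com}$ and $\mathbf B$ (check via Lemma~\ref{L: x=xx}). Hence, by Lemma~\ref{L: xy isoterm}, $xy$ is an isoterm and $M(xy)\in\mathbf V$.

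\emph{Step 2.} Take any identity $xzytxyh^2z\approx\mathbf w$ of $\mathbf V$ and analyse $\mathbf w$ using $xyxz\approx xyxzx$. Several facts constrain its shape:
\begin{itemize}
\item $t$ is simple and $xy$ is an isoterm;
\item letter order within the first occurrences is controlled by identities inherited from $M(xy)$ and related monoids;
\item by the presence or absence of $\mathbf B_2$ and $\mathbf F\{\alpha_1\}$ (Lemma~\ref{L: does not contain LRB,F_1}), one can reduce to the case where these are contained in $\mathbf V$, since otherwise $\mathbf V\subseteq\mathbf F$ or $\mathbf V\subseteq\mathbf B_2\vee\mathbf M(x)$, both of which satisfy the identity.
\end{itemize}
The goal is to show that the only possible non-trivial change is swapping the second occurrences $xy\to yx$. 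Consequently, violating the identity means the word $xzytxyh^2z$ is an isoterm modulo the $xyxz\approx xyxzx$ closure. In other words, $\mathbf V$ separates $xy$ from $yx$ in that context.

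\emph{Step 3.} Deduce that $\mathbf V$ contains the monoid (or monoids) generating $\mathbf G$. I would first show $\kappa_1$-compatible parts lie in $\mathbf V$, then use Lemma~\ref{L: isoterm} for the appropriate set of words witnessing the non-identity $\mathbf w_n\not\approx\mathbf w_n'$.

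The main obstacle is Step 2 combined with matching it to the actual generators of $\mathbf G$. The defining identities of $\mathbf G$ involve the infinite family $\mathbf w_n[\pi,\pi']\approx\mathbf w_n'[\pi,\pi']$, so $\mathbf G$ is presumably the minimal variety in which $xy$ and $yx$ are distinguished after $h^2z$-type contexts. The delicate part is showing, by substitution, that failure of the single identity $xzytxyh^2z\approx xzytyxh^2z$ forces $\mathbf V$ to satisfy all of these. To do this, I would substitute $z\mapsto\prod z_i$, $h\mapsto$ suitable products of the $x_i,t_i$, and use $xyxz\approx xyxzx$ to absorb later occurrences.
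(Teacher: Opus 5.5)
The contrapositive is the right frame, but your plan runs the inclusion the wrong way. Showing that $\mathbf V$ satisfies the defining identities of $\mathbf G$ (your opening sentence, and again ``forces $\mathbf V$ to satisfy all of these'' at the end) would give $\mathbf V\subseteq\mathbf G$, and that is false in general. Take $\var\{xyxz\approx xyxzx\}$. It contains $\mathbf G$, hence violates $xzytxyh^2z\approx xzytyxh^2z$: the identity fails in $\mathbf G$, since $\mathbf G$ is non-finitely based while $\mathbf G\{xzytxyh^2z\approx xzytyxh^2z\}$ is Cross by Lemma~\ref{L: O_k}. Yet it also contains $\mathbf F$, and therefore violates $\kappa_1$ by Proposition~\ref{P: F} and Lemma~\ref{L: kappa_k and delta_k^k}. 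More basically, the failure of an identity never produces identities by substitution.

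What has to be shown is that \emph{every} identity of $\mathbf V$ holds in $\mathbf G$. Equivalently, given an arbitrary identity $\mathbf u\approx\mathbf v$ of $\mathbf V$ that fails in $\mathbf G$, you must derive $xzytxyh^2z\approx xzytyxh^2z$ from $\mathbf u\approx\mathbf v$ and $xyxz\approx xyxzx$. Your Step~2 only looks at identities of $\mathbf V$ whose left side is $xzytxyh^2z$, so it says nothing about such an arbitrary $\mathbf u\approx\mathbf v$.

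The generator route of Step~3 cannot fill this gap. Since $xyxz\approx xyxzx$ implies $x^2\approx x^3$ and $xyx\approx xyx^2$, no word with a repeated letter is an isoterm for $\mathbf G$. By Lemma~\ref{L: isoterm}, neither $M(xzytxyh^2z)$ nor any $M(\mathbf w_n[\pi,\pi^\prime])$ lies in $\mathbf G$, and the only Rees quotients $M(\mathscr W)$ inside $\mathbf G$ come from linear words. Those, together with $\mathbf B_2$ and $\mathbf F\{\alpha_1\}$, all satisfy $xzytxyh^2z\approx xzytyxh^2z$; in $\mathbf F$ one even has $xzytxy\approx xzytx^2y^2\approx xzyty^2x^2\approx xzytyx$. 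So no join of such pieces equals $\mathbf G$.

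The missing ingredient is a concrete handle on the equational theory of $\mathbf G$: a solution of its word problem, or an explicit finite generator with a criterion for its identities. That must be followed by the syntactic derivation described above. This is the real content of \cite[Lemma~5.3]{Gusev-Li-Zhang-25}, which the paper quotes without proof, and your proposal does not supply it.

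A smaller slip: in Step~1, $\mathbf B$ does \emph{not} satisfy the identity, because in $\overleftarrow{\mathbf B_2}$ the orders of last occurrences are $txyhz$ versus $tyxhz$. The idempotent case is harmless only because the hypothesis $xyxz\approx xyxzx$ already rules out $\overleftarrow{\mathbf B_2}$.
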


The following statement provides a new example of an almost Cross variety of aperiodic monoids.

\begin{proposition}
\label{P: G}
The variety $\mathbf G$ is a non-finitely based\textup, finitely generated almost Cross variety of monoids. 
\end{proposition}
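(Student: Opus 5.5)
Since $\mathbf G$ is defined as a set of identities, I read the statement as being about the variety $\var\,\mathbf G$, still denoted $\mathbf G$. Four things must be shown: (1) $\mathbf G$ is finitely generated, (2) $\mathbf G$ is not finitely based, (3) every proper subvariety of $\mathbf G$ is Cross, and (4) $\mathbf G$ is not itself Cross. Condition (4) follows from (2).

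\emph{Proper subvarieties (step 3).} Let $\mathbf X\subsetneq\mathbf G$. Then $\mathbf X$ satisfies $xyxz\approx xyxzx$ and $\mathbf G\nsubseteq\mathbf X$, so Lemma~\ref{L: V does not contain G} gives $xzytxyh^2z\approx xzytyxh^2z$ in $\mathbf X$. Since $\kappa_1\in\mathbf G$, the variety $\mathbf X$ also satisfies $\kappa_1$. Lemma~\ref{L: O_k} with $k=1$ then shows that $\mathbf X$ is Cross.

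\emph{Finite generation (step 1).} I would exhibit a finite monoid generating $\mathbf G$, most naturally $M(\mathscr W)$ for a suitable finite set $\mathscr W$, possibly joined with a small monoid such as $A_0$ or a generator of $\mathbf B_2$, and take $S$ to be the resulting finite generator. One inclusion is that $S\in\mathbf G$, checked identity by identity. Each $\mathbf w_n[\pi,\pi']\approx\mathbf w_n'[\pi,\pi']$ has to be verified in $S$ via a substitution argument. For the other inclusion, suppose $\var\, S\subsetneq\mathbf G$. Then by step 3, $\var\, S$ satisfies $xzytxyh^2z\approx xzytyxh^2z$, so it is enough to check that $S$ violates this identity. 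That gives $\var\, S=\mathbf G$. I expect the right generator to be $M(\mathbf w)$ for a word $\mathbf w$ modelled on $xzytxyh^2z$. Alternatively, one could use the general locally finite argument of Lemma~\ref{L: xyx=xyxx is finitely generated}, but $\mathbf G$ itself is not small, so a concrete finite generator seems necessary.

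\emph{Non-finite basis (step 2).} This is the main obstacle. I would show that for each $n$ some identity $\mathbf w_n[\pi,\pi']\approx\mathbf w_n'[\pi,\pi']$ does not follow from $\{xyxz\approx xyxzx,\kappa_1\}$ together with all $\mathbf w_m$-identities for $m<n$. The standard method is to prove that every identity of $\mathbf G$ in fewer than about $2n$ variables, applied to $\mathbf w_n[\pi,\pi']$ by substitution inside a subword, cannot swap the adjacent occurrences $xy$ after $t$. The letters $x_i,z_i$ are linked by the permutations into a chain of length $\sim 2n$. Any substitution collapsing the chain through fewer letters must identify letters, or send some to $1$, in a way that destroys the ``separation'' that makes $xy$ versus $yx$ distinguishable. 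Concretely, I would define an invariant of words: the order of the last occurrences of $x,y$ relative to the linked structure. I would then show it is preserved by every consequence of the shorter identities but differs between $\mathbf w_n$ and $\mathbf w_n'$. The delicate part is making this work uniformly over all $\pi,\pi'$. It may suffice to fix one convenient choice, such as the identity permutation or a cyclic shift, for the witness in each $n$.
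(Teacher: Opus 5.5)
\paragraph{Finite generation.}
Your step~3 is correct and is exactly the paper's argument. By Lemma~\ref{L: V does not contain G}, every proper subvariety of $\mathbf G$ lies in $\mathbf G^\prime:=\mathbf G\{xzytxyh^2z\approx xzytyxh^2z\}$, and $\mathbf G^\prime$ is Cross by Lemma~\ref{L: O_k} with $k=1$.

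The genuine gap is in step~1, which rests on the false claim that $\mathbf G$ is not small. Your own step~3 shows the opposite. Every proper subvariety of $\mathbf G$ lies in the single Cross, hence small, variety $\mathbf G^\prime$, so $\mathfrak L(\mathbf G)=\mathfrak L(\mathbf G^\prime)\cup\{\mathbf G\}$ is finite. Substituting $z\mapsto 1$ in $xyxz\approx xyxzx$ gives $xyx\approx xyx^2$. Lemma~\ref{L: xyx=xyxx is finitely generated} then yields at once that $\mathbf G$ is finitely generated, which is precisely how the paper concludes.

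As written, your step~1 never exhibits or checks a generator $S$, so finite generation is not proved. Moreover, the candidates you suggest cannot work. If $\mathbf w=\mathbf p x\mathbf q x\mathbf r$ has a repeated letter $x$, then $xyx\approx xyx^2$ gives the non-trivial identity $\mathbf w\approx\mathbf p x\mathbf q x^2\mathbf r$. So $\mathbf w$ is not an isoterm for $\mathbf G$, and $M(\mathbf w)\notin\mathbf G$ by Lemma~\ref{L: isoterm}. In particular no word modelled on $xzytxyh^2z$ will do. Likewise $A_0\notin\mathbf G$: substituting $x\mapsto a$, $y\mapsto 1$, $z\mapsto b$ in $xyxz\approx xyxzx$ gives $ab$ on the left and $aba=0$ on the right.

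\paragraph{Non-finite basedness.}
Step~2 is also not a proof. Your reduction is logically sound: if $\mathbf G$ were finitely based, some finite subset of its defining identities would already define it. However, the invariant you propose is never defined precisely, and you never check that it is preserved by derivations using the shorter identities. So non-finite basedness is left unestablished.

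This is the hard combinatorial part, and the paper does not reprove it either; it quotes \cite[Proposition~5.4]{Gusev-Li-Zhang-25}. You should cite that result or carry out the full syntactic argument. With that citation and the smallness observation above, your steps~3 and~4 complete the proof.
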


\begin{proof}
It is proved in~\cite[Proposition~5.4]{Gusev-Li-Zhang-25} that the variety $\mathbf G$ is non-finitely based.
Therefore, $\mathbf G$ is a non-Cross variety.
According to Lemma~\ref{L: V does not contain G}, each proper subvariety of $\mathbf G$ satisfies the identity $xzytxyh^2z\approx xzytyxh^2z$.
In other words, the variety $\mathbf G\{xzytxyh^2z\approx xzytyxh^2z\}$ is a maximal subvariety of $\mathbf G$.
By the very definition, the identity $\kappa_1$ holds in $\mathbf G$.
Now Lemma~\ref{L: O_k} applies, yielding that $\mathbf G\{xzytxyh^2z\approx xzytyxh^2z\}$ is a Cross variety.
Therefore, $\mathbf G$ is an almost Cross variety with finitely many subvarieties.
Finally, by Lemma~\ref{L: xyx=xyxx is finitely generated}, the variety $\mathbf G$ is finitely generated.
\end{proof}

\begin{proposition}
\label{P: xyxz=xyxzx}
The varieties $\mathbf F$ and $\mathbf G$ are the only almost Cross varieties of monoids satisfying the identity $xyxz\approx xyxzx$.
\end{proposition}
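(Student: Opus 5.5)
We already know that $\mathbf F$ and $\mathbf G$ are almost Cross (Propositions~\ref{P: F} and~\ref{P: G}), and both satisfy $xyxz\approx xyxzx$. Hence it suffices to prove the following: if $\mathbf V$ is a variety satisfying $xyxz\approx xyxzx$ and containing neither $\mathbf F$ nor $\mathbf G$, then $\mathbf V$ is Cross. Indeed, if some almost Cross $\mathbf V$ satisfying $xyxz\approx xyxzx$ were different from $\mathbf F$ and $\mathbf G$, then, by minimality, $\mathbf V$ could not contain $\mathbf F$ or $\mathbf G$ as a proper subvariety. The claim would then make $\mathbf V$ Cross, a contradiction.

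So let $\mathbf V$ satisfy $xyxz\approx xyxzx$ with $\mathbf F\nsubseteq\mathbf V$ and $\mathbf G\nsubseteq\mathbf V$. Since $\mathbf G\nsubseteq\mathbf V$, Lemma~\ref{L: V does not contain G} shows that $\mathbf V$ satisfies $xzytxyh^2z\approx xzytyxh^2z$. By Lemma~\ref{L: O_k}, it remains to show that $\mathbf V$ satisfies $\kappa_k$ for some $k\ge1$, or else to handle directly the situation where this fails. We split into two cases according to whether $\mathbf B_2\subseteq\mathbf V$.

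\emph{Case $\mathbf B_2\nsubseteq\mathbf V$.} Lemma~\ref{L: does not contain LRB,F_1}(i) gives $\mathbf V\subseteq\mathbf F$, and the inclusion is proper. By Proposition~\ref{P: F}, every proper subvariety of $\mathbf F$ has finitely many subvarieties, namely the members of the chain below it. It is also finitely based: it is $\mathbf F\{\sigma\}$ for a single identity $\sigma$, or one of the finitely based small varieties at the bottom of the chain. It satisfies $xyx\approx xyx^2$, since $xyxz\approx xyxzx$ with $z=1$ gives this, so Lemma~\ref{L: xyx=xyxx is finitely generated} shows it is finitely generated. Hence $\mathbf V$ is Cross. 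Alternatively, $\mathbf V\subseteq\mathbf F\{\delta_k^k\}$ for some $k$, so $\kappa_k$ holds by Lemma~\ref{L: kappa_k and delta_k^k}, and Lemma~\ref{L: O_k} applies directly.

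\emph{Case $\mathbf B_2\subseteq\mathbf V$.} This is the main obstacle. We must show that $\mathbf F\nsubseteq\mathbf V$ forces $\kappa_k$ for some $k$.

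The idea is that $\mathbf F$ is generated by its chain of subvarieties, so $\mathbf F=\bigvee_k\mathbf F\{\delta_k^k\}$. Since $\mathbf F\nsubseteq\mathbf V$, the variety $\mathbf F\wedge\mathbf V$ is a proper subvariety of $\mathbf F$. By Proposition~\ref{P: F}, it is therefore contained in $\mathbf F\{\delta_k^k\}$ for some $k$, so $\mathbf F\wedge\mathbf V$ satisfies $\kappa_k$.

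We then need to lift $\kappa_k$ from $\mathbf F\wedge\mathbf V$ to $\mathbf V$. For this we would use that $\mathbf V$ fails some identity that $\mathbf F$ does not fail, i.e.\ some identity of $\mathbf F$ is violated in $\mathbf V$; equivalently, some word structure of $\mathbf F$ fails to be an isoterm-like object for $\mathbf V$. We would then combine $x^2y^2\approx y^2x^2$-type consequences with $xyxz\approx xyxzx$ to derive $\kappa_k$ in $\mathbf V$ itself.

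Concretely, I would first try to prove directly that $\mathbf V$ satisfies an identity $xx_kx\mathbf b_k\approx\mathbf w$ with $\mathbf w\ne xx_kx\mathbf b_k$. This is a characterization of $\mathbf F\{\delta_k^k\}$-membership through the isoterm status of the words $xx_kx\mathbf b_k$, obtained via Lemma~\ref{L: isoterm} in the style of~\cite{Gusev-Li-Zhang-25}. Using $xyxz\approx xyxzx$ and $\mathbf B_2\subseteq\mathbf V$, which restricts the possible $\mathbf w$ through $\ini$ by Lemma~\ref{L: word problem LRB}, I would then show that the only possibility is $\mathbf w=x^2x_k\mathbf b_k$ up to consequences, which yields $\kappa_k$. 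Lemma~\ref{L: O_k} then finishes the proof.
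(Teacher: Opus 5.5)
Your reduction to the claim and your treatment of the case $\mathbf B_2\nsubseteq\mathbf V$ are correct. Your skeleton is also the paper's: Lemma~\ref{L: V does not contain G} gives $xzytxyh^2z\approx xzytyxh^2z$, one then needs $\kappa_k$ for some $k$, and Lemma~\ref{L: O_k} finishes. The gap is the case $\mathbf B_2\subseteq\mathbf V$, which you flag as the main obstacle and then leave as a plan. Proposition~\ref{P: F} only tells you that $\mathbf V\wedge\mathbf F=\mathbf V\{x^2y^2\approx y^2x^2\}$ satisfies $\kappa_k$. A derivation of $\kappa_k$ from the identities of $\mathbf V$ together with $x^2y^2\approx y^2x^2$ may use the latter essentially, and Lemma~\ref{L: O_k} needs $\kappa_k$ in $\mathbf V$ itself. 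Nothing in your write-up bridges this.

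The bridge you sketch would not work as stated, for three reasons.

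First, the lever is reversed. An identity of $\mathbf F$ violated in $\mathbf V$ only witnesses $\mathbf V\nsubseteq\mathbf F$, which is automatic once $\mathbf B_2\subseteq\mathbf V$. What $\mathbf F\nsubseteq\mathbf V$ gives you is an identity of $\mathbf V$ that fails in $\mathbf F$.

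Second, $xx_kx\mathbf b_k$ is never an isoterm for a variety satisfying $xyxz\approx xyxzx$. Substituting $y\mapsto x_k$, $z\mapsto\mathbf b_k$ gives the non-trivial identity $xx_kx\mathbf b_k\approx xx_kx\mathbf b_kx$, so Lemma~\ref{L: isoterm} says nothing here.

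Third, the restriction $\ini(\mathbf w)=\ini(xx_kx\mathbf b_k)$ imposed by $\mathbf B_2$ leaves many words besides $x^2x_k\mathbf b_k$. For example, the second occurrence of $x$, or of some $x_i$, can be shifted past a later first occurrence. You would have to show that every such identity, modulo $xyxz\approx xyxzx$, forces some $\kappa_j$, and the proposal does not do this.

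The paper does not split on $\mathbf B_2$ at all. It states in one sentence that $\mathbf F\nsubseteq\mathbf V$ yields $\kappa_k$ in $\mathbf V$ by Proposition~\ref{P: F} and Lemma~\ref{L: kappa_k and delta_k^k}. Your observation that the chain directly controls only $\mathbf V\wedge\mathbf F$ is exactly what a full justification of that sentence has to overcome. Since you do not supply that argument, your proof is incomplete in its only nontrivial case.
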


\begin{proof}
The varieties $\mathbf F$ and $\mathbf G$ are almost Cross by Propositions~\ref{P: F} and~\ref{P: G}, respectively.
Let $\mathbf V$ be a variety satisfying the identity $xyxz\approx xyxzx$ and not containing the varieties $\mathbf F$ and $\mathbf G$.
Since $\mathbf F\nsubseteq\mathbf V$, Proposition~\ref{P: F} and Lemma~\ref{L: kappa_k and delta_k^k} imply that $\mathbf V$ satisfies $\kappa_k$ for some $k\ge1$. 
Since $\mathbf G\nsubseteq\mathbf V$, it follows from Lemma~\ref{L: V does not contain G} that $\mathbf V$ satisfies $xzytxyh^2z\approx xzytyxh^2z$. 
Now Lemma~\ref{L: O_k} applies, yielding that $\mathbf V$ is a Cross variety.
\end{proof}

\subsection{The varieties containing $\mathbf F\{\alpha_1\}\vee\mathbf D$}

Let 
\[
\mathbf I:=\var
\left\{
\left.
\begin{array}{l}
xyx\approx xyx^2,\,xyxztx\approx xyxzxtx,\,xyzxy\approx yxzxy,\,x^2y^2\approx y^2x^2,\\
xz_{1\pi} \cdots z_{n\pi}x t_1z_1\cdots t_nz_n\approx
x^2z_{1\pi} \cdots z_{n\pi}t_1z_1\cdots t_nz_n
\end{array}
\right|
\begin{array}{l}
n\ge1,\\
\pi\in S_n
\end{array}
\right\}.
\]
Note that a 31-element generator for the variety $\mathbf I$ was found in~\cite[Theorem~7.2(v)]{Sapir-21}. 

\begin{proposition}[\!{\cite[Corollary~1.2]{Gusev-20}}]
\label{P: I}
The variety $\mathbf I$ is a non-finitely based almost Cross variety.
\end{proposition}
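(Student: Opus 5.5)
The statement is cited from \cite[Corollary~1.2]{Gusev-20}, so within this paper it is simply imported. Still, here is how I would prove it.

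\emph{Non-finite basis.} I would argue by contradiction using the defining family. Suppose $\mathbf I$ were finitely based. Then it would be defined by the first four identities together with finitely many members of the series
\[
xz_{1\pi}\cdots z_{n\pi}x\,t_1z_1\cdots t_nz_n\approx x^2z_{1\pi}\cdots z_{n\pi}\,t_1z_1\cdots t_nz_n,
\]
say with $n<N$. The task is to exhibit, for each $N$, a variety satisfying all of these but failing the member with $n=N$ and $\pi$ the identity (or a long cycle). A natural candidate is $\mathbf M(\mathbf w_N)$ for a suitable word $\mathbf w_N$ built from the left-hand side; by Lemma~\ref{L: isoterm} one then needs only to check isoterm conditions. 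Alternatively, one can work syntactically: show that every consequence of the shorter identities, applied to the left-hand side $\mathbf u_N$, cannot move the second occurrence of $x$ past the whole block $z_{1\pi}\cdots z_{n\pi}$. The key invariant is that each $z_i$ occurs exactly twice, separated by $t_i$'s, so any substitution into a shorter identity collapses some $z$'s. I expect this invariant argument to be the main obstacle.

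\emph{Almost Cross.} Take a proper subvariety $\mathbf X\subset\mathbf I$. Then $\mathbf X$ fails to contain some "witness" subvariety, and I would extract consequences from that failure:
\begin{itemize}
\item If $\mathbf D$ is not contained, then by the techniques of Lemma~\ref{L: A01 nsubseteq V} one obtains identities like $x^2y\approx xyx$.
\item If $\mathbf F\{\alpha_1\}$ is not contained, one obtains $\alpha_1$-type collapses.
\item Otherwise, $\mathbf X$ satisfies an identity that makes the infinite series above follow from one bounded member.
\end{itemize}
In each case, the subvarieties of $\mathbf X$ would be described by subsets of a finite list of identities, in the style of Lemma~\ref{L: O_k}, which makes $\mathbf X$ finitely based and small. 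Lemma~\ref{L: xyx=xyxx is finitely generated} then gives finite generation, since $xyx\approx xyx^2$ holds. This classification of the subvarieties of $\mathbf I$ is the laborious part.

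Finally, $\mathbf I$ itself is small: a chain of subvarieties would otherwise produce a non-Cross proper subvariety, contradicting the previous step. Hence $\mathbf I$ is finitely generated, and so it is almost Cross precisely because it is non-finitely based.
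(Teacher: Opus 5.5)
Relying on \cite[Corollary~1.2]{Gusev-20} is exactly what the paper does: Proposition~\ref{P: I} is imported with no argument of its own. Your self-contained sketch, however, leaves both load-bearing steps undone and contains concrete errors.

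\emph{Non-finite basedness.} The witness $\mathbf M(\mathbf w_N)$ cannot work. In any variety satisfying $xyx\approx xyx^2$, a word with a repeated letter is not an isoterm, since you can double its second occurrence. So by Lemma~\ref{L: isoterm}, $M(\mathbf w_N)$ already violates the first defining identity of $\mathbf I$ whenever $\mathbf w_N$ repeats a letter, as anything built from the left-hand side does. Your syntactic alternative is the viable route, but the invariant must be checked against all defining identities, not just the shorter members. Show that the words $xz_{1\pi}\cdots z_{N\pi}x^{a}t_1z_1^{b_1}\cdots t_Nz_N^{b_N}$ with $a,b_i\ge1$ form a set closed under:
\begin{itemize}
\item the first two identities, which only change $a$ and the $b_i$;
\item $xyzxy\approx yxzxy$ and $x^2y^2\approx y^2x^2$, which act trivially, because every subword occurring twice is a power of one letter and no squares of distinct letters are adjacent;
\item the members with $n<N$: in any nontrivial instance the two occurrences of $s(x)=x$ enclose the block $z_{1\pi}\cdots z_{N\pi}$, and each image of a $z_j$ meeting this block must reoccur later, hence is a single $z_i$; fewer than $N$ of them cannot cover the block.
\end{itemize}
The right-hand side of the $N$-th member lies outside this set.

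\emph{Almost Cross.} The actual work, a finite description of the subvarieties of each proper subvariety, is not done. Your first bullet is also backwards: $x^2y\approx xyx$ is a defining identity of $\mathbf D$, so it cannot be what excluding $\mathbf D$ yields. Moreover $\mathbf D\subseteq\mathbf F\{\alpha_1\}$, and the witnesses that matter inside $\mathbf I$ are $\overleftarrow{\mathbf D}$ and $\mathbf F\{\alpha_1\}$ (cf.\ Lemma~\ref{L: does not contain I}).

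\emph{Smallness.} The argument in your last paragraph is invalid. An infinite subvariety lattice need not contain a non-Cross proper member: $\mathfrak L(\mathbf F)$ and $\mathfrak L(\mathbf H)$ are infinite, yet all their proper members are Cross (Propositions~\ref{P: F} and~\ref{P: H}). To get smallness you would have to prove that $\mathbf I$ has finitely many maximal subvarieties and every proper subvariety lies below one of them. The step is also unnecessary. Being almost Cross only requires that $\mathbf I$ be non-Cross, which non-finite basedness already gives, and that its proper subvarieties be Cross.
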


\begin{lemma}[\!{\cite[Lemma~4.1]{Gusev-21}}]
\label{L: does not contain I}
Let $\mathbf V$ be a monoid variety that contains the variety $\mathbf F\{\alpha_1\}\vee\overleftarrow{\mathbf D}$ and satisfies the identity $xyx\approx xyx^2$. 
If $\mathbf I\nsubseteq\mathbf V$, then $\mathbf V$ satisfies $xzyxty\approx xzxyxty$.
\end{lemma}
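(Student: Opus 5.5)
We argue by contraposition: assume that $\mathbf V$ contains $\mathbf F\{\alpha_1\}\vee\overleftarrow{\mathbf D}$, satisfies $xyx\approx xyx^2$, and violates
\[
xzyxty\approx xzxyxty,
\]
and show that $\mathbf I\subseteq\mathbf V$. It suffices to prove that every identity $\mathbf u\approx\mathbf v$ of $\mathbf V$ holds in $\mathbf I$. Since $\mathbf I$ is given by an explicit identity basis, the first step is to derive from that basis a combinatorial solution of the word problem for $\mathbf I$. Modulo $xyx\approx xyx^2$, $x^2y^2\approx y^2x^2$ and the other defining identities, I expect $\mathbf u\approx\mathbf v$ to hold in $\mathbf I$ if and only if three conditions hold:
\begin{itemize}
\item[(a)] $\mathbf u$ and $\mathbf v$ have the same simple letters, in the same order;
\item[(b)] the blocks between consecutive simple letters have the same content;
\item[(c)] for multiple letters, the relative order of first occurrences, and of first occurrences against the latest occurrences of other letters within each block, agree in $\mathbf u$ and $\mathbf v$.
\end{itemize}
Condition (c) is exactly the data the identities $xz_{1\pi}\cdots z_{n\pi}x\,t_1z_1\cdots t_nz_n\approx x^2z_{1\pi}\cdots z_{n\pi}\,t_1z_1\cdots t_nz_n$ and $xyzxy\approx yxzxy$ erase. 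This step is essentially a normal-form argument and I would carry it out first.

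The second step is to collect restrictions on identities of $\mathbf V$ from the hypotheses.
\begin{itemize}
\item From $\mathbf F\{\alpha_1\}\subseteq\mathbf V$: the word $xy$ is an isoterm (Lemma~\ref{L: xy isoterm} combined with Lemma~\ref{L: isoterm}, since $\mathbf V$ is neither commutative nor idempotent). Moreover, every identity of $\mathbf V$ holds in $\mathbf F\{\alpha_1\}$. This should force condition (a), the content part of condition (b), and the order of first occurrences of multiple letters.
\item From $\overleftarrow{\mathbf D}\subseteq\mathbf V$: the dual of the basis of $\mathbf D$ governs last occurrences. This yields the order of last occurrences modulo the appropriate identifications.
\end{itemize}

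The third step uses the failure of $xzyxty\approx xzxyxty$. In any identity $\mathbf u\approx\mathbf v$ of $\mathbf V$, an occurrence of a letter $x$ sitting between a first occurrence of $y$ and a later occurrence of $y$ (separated by a simple letter) cannot be moved or deleted. Otherwise, substituting suitable letters or $1$ for the remaining letters of $\mathbf u\approx\mathbf v$, and using $xyx\approx xyx^2$ to normalize, one would derive $xzyxty\approx xzxyxty$ or its reverse direction. This supplies the remaining part of condition (c).

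The main obstacle is this third step together with the precise formulation of the word problem for $\mathbf I$. I would proceed by taking a hypothetical identity of $\mathbf V$ that fails in $\mathbf I$ and choosing a minimal pair of offending occurrences. I would then delete all letters except $x$, $y$ and two suitable simple letters, and do a case analysis on the resulting two-letter patterns. The cases are dispatched using isoterms coming from $\mathbf F\{\alpha_1\}$ and $\overleftarrow{\mathbf D}$ (Lemma~\ref{L: isoterm}), with the forbidden identity covering the one case these do not exclude.
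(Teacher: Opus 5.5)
The paper gives no proof of this lemma; it is quoted from \cite[Lemma~4.1]{Gusev-21}. Your contrapositive frame (show that every identity of $\mathbf V$ holds in $\mathbf I$) is legitimate. However, the concrete content of your three steps is false, as you can see by testing it on $\mathbf V=\mathbf I$ itself, which meets all the hypotheses of your contrapositive:
\begin{itemize}
\item it satisfies $xyx\approx xyx^2$;
\item its defining identities hold in $\overleftarrow{\mathbf D}$, since they preserve the simple letters, their order, and the block containing each last occurrence;
\item they also hold in $\mathbf F\{\alpha_1\}$: for the infinite family, insert an $x$ after the later occurrence of $z_{n\pi}$ by $xyxz\approx xyxzx$, turn $z_{n\pi}x$ into $xz_{n\pi}$ by $\alpha_1$, delete the inserted $x$, and repeat;
\item it violates $xzyxty\approx xzxyxty$, since otherwise the argument of Case~2.1 in the proof of Theorem~\ref{T: main result} (with $Q\notin\mathbf I$ and $\mathbf K\nsubseteq\mathbf I$) would make $\mathbf I$ Cross.
\end{itemize}
Yet each of your invariants fails in $\mathbf I$.

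(i) $\mathbf I$ satisfies $xyxztx\approx xyxzxtx$, where the block between the simple letters $z$ and $t$ is empty on one side and equal to $x$ on the other. Conditions (a) and (b) together are precisely the word problem of $Q$ (Lemma~\ref{L: id Q1}), so (b) is not necessary for $\mathbf I$; indeed $Q\notin\mathbf I$. Nor can $\mathbf F\{\alpha_1\}$ force (b), since it satisfies $xyxz\approx xyxzx$.

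(ii) The order of first occurrences of multiple letters is not invariant either. After renaming, $\alpha_1$ is $xyzxy\approx yxzxy$, which is a defining identity of $\mathbf I$ and also holds in $\mathbf F\{\alpha_1\}$. Your own remark that (c) is the data this identity erases contradicts requiring that data to agree.

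(iii) The Step~3 invariant fails as well. The case $n=1$ of the defining family of $\mathbf I$ is $xz_1xt_1z_1\approx x^2z_1t_1z_1$. This moves an occurrence of $x$ that lies between the first $z_1$ and a later $z_1$, separated from it by the simple letter $t_1$.

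So, as written, you would be trying to prove that every identity of $\mathbf V$ obeys (a)--(c) and the Step~3 invariant, which is already false for $\mathbf V=\mathbf I$. The closing case analysis rests on these invariants and therefore does not go through. The missing idea is the correct invariant. The difference between $xzyxty\approx xzxyxty$ and the $n=1$ identity is the simple letter $z$, which separates the first occurrence of $x$ from the block in which $y$ first occurs and $x$ recurs. When the first $x$ lies in that same block, as in $xz_{1\pi}\cdots z_{n\pi}x\,t_1z_1\cdots t_nz_n$, the later $x$ may be moved past all the $z_i$. A correct argument has to describe the identities of $\mathbf I$ (or give a criterion for $\mathbf I\subseteq\mathbf V$) in terms of such positional data; note that in $\mathbf I$ only the first, second and last occurrences of a letter matter, by $xyx\approx xyx^2$ and $xyxztx\approx xyxzxtx$. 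It must then actually derive $xzyxty\approx xzxyxty$ from an identity of $\mathbf V$ that fails in $\mathbf I$. That is exactly the part you label the main obstacle and leave open.

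Your hypotheses also supply less than you claim. Through $\mathbf D\subseteq\mathbf F\{\alpha_1\}$ they fix the simple letters, their order, and the block of each first occurrence; through $\overleftarrow{\mathbf D}$ they fix the block of each last occurrence. But even $\mathbf F\{\alpha_1\}\vee\overleftarrow{\mathbf D}$ satisfies $xyxztx\approx xyxzxtx$ and $\alpha_1$, so block contents and the order of first occurrences are not fixed.
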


Let $\mathbf K$ denote the variety generated by the monoid
\begin{align*}
K&:=\langle a,b,c\mid a^2=a,\,b^2=b^3,\,abc=ac=ba=b^2c=0,\,bcb^2=bcb,\,ca=c\rangle\cup\{1\}\\
&=\{a,b,c,ab,ab^2,b^2,bc,bcb,cb,cb^2,0,1\}.
\end{align*}

\begin{proposition}[\!{\cite[Propositions~5.1 and~6.5]{Gusev-Sapir-22}}]
\label{P: K1}
The variety $\mathbf K$ is a non-finitely based\textup, finitely generated almost Cross variety.
\end{proposition}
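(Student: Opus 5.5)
Since $K$ is a finite monoid, $\mathbf K$ is finitely generated by definition. The statement therefore reduces to two claims: (a) $\mathbf K$ is non-finitely based, and (b) every proper subvariety of $\mathbf K$ is Cross. I would start by computing a manageable portion of the equational theory of $K$. Direct checks on the twelve elements should show that $K$ satisfies $x^2\approx x^3$, $xyx\approx x^2yx$ and $xty^2x\approx xtxy^2x$; the relations $ca=c$, $bcb^2=bcb$ and $b^2=b^3$ are what make these hold. By Lemma~\ref{L: implies O} this gives $\mathbf K\subseteq\mathbf O$, and I will use this as the ambient variety throughout. Next I would show that $\mathbf K$ violates every identity $\tau_m$ ($m\ge2$). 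Concretely, evaluate $x\mapsto a$, $y\mapsto b$, and send the $t_i$ to the appropriate alternating values built from $c$ and $1$ so that the left side is nonzero while the right side vanishes (or the reverse). This also shows that $\mathbf K$ is not Cross by itself, once (a) is established.

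For (b), the key step is a lemma of the form \emph{if $\mathbf V\subseteq\mathbf K$ and $K\notin\mathbf V$, then $\mathbf V$ satisfies $\tau_m$ for some $m\ge2$}. Once this holds, Lemma~\ref{L: O-2} applies directly, since $\mathbf V$ inherits $xyx\approx x^2yx$ and $xty^2x\approx xtxy^2x$ from $\mathbf K$. To prove the lemma, I would use Proposition~\ref{P: subvarieties of O}. If $xy$ is not an isoterm for $\mathbf V$, then Lemma~\ref{L: xy isoterm} together with Corollary~\ref{C: com id Cross} finishes, because $\mathbf B$ violates $xty^2x\approx xtxy^2x$. Otherwise $\mathbf V$ is defined within $\mathbf O$ by some $\Sigma_1\subseteq\Phi_1$ and $\Sigma_2\subseteq\Phi_2$. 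A proper subvariety of $\mathbf K$ must satisfy an identity of $\Phi_1\cup\Phi_2$ that fails in $K$.

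I would then go through the forms $\mathbf h\mathbf p\mathbf c\approx\mathbf h\mathbf q\mathbf c$, and those of $\Phi_1$, that fail in $K$. Using $xyx\approx x^2yx$, normalize so that $|\mathbf p|_x=|\mathbf p|_y=2$, as in the proof of Lemma~\ref{L: O-2}. Then show that each such identity, combined with the identities of $\mathbf K$, yields some $\tau_m$, for instance by substituting $x^2y^2$ or $xy^2x$ for the relevant block and absorbing $\mathbf h$ with the identities of $\mathbf K$. The $\Phi_1$ identities and the $\Phi_2$ identities with $\mathbf c\in\{1,txy\}$ should imply $\tau_m$ for small $m$, or even collapse further.

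For (a), I would show that for every $n$ the identity $\tau_n$-like identity
\[
x^2y^2\biggl(\prod_{i=1}^n t_i\mathbf a_i\biggr)\approx xyxy\biggl(\prod_{i=1}^n t_i\mathbf a_i\biggr),
\]
or a suitable variant mixing $x^2y^2$ and $xy^2x$ that does hold in $K$, is not a consequence of the identities of $K$ in fewer than about $n$ letters. The standard route is to show that any derivation step applied to the left word, using an identity of $K$ with few letters, preserves the alternating structure forced by the chain $t_1\mathbf a_1\cdots t_n\mathbf a_n$.

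I expect the main obstacle to be the case analysis in (b): identifying exactly which members of $\Phi_2$ fail in $K$, and deriving some $\tau_m$ from each. The non-finite basis argument (a) is also delicate. If direct derivation proves awkward there, I would try first to exploit the fact that $\mathbf K$ is already known to be almost Cross among subvarieties of $\mathbf O$ through the structure of the chain $\mathbf c$.
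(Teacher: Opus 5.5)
\textbf{The reduction rests on identities that $K$ does not satisfy.}

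Under $x\mapsto b$, $y\mapsto c$ you get $xyx\mapsto bcb\neq 0$ but $x^2yx\mapsto b^2cb=0$, because $b^2c=0$. Under $x\mapsto b$, $t\mapsto c$, $y\mapsto a$ you get $xty^2x\mapsto bca^2b=bcb$ (as $ca=c$) but $xtxy^2x\mapsto bcba^2b=0$ (as $ba=0$). With $z\mapsto c$, $t\mapsto 1$ and the same values of $x,y$, you get $xzytyx\mapsto bcb$ and $xzytxyx\mapsto 0$. So $K$ violates a defining identity of $\mathbf O$, and $\mathbf K\nsubseteq\mathbf O$.

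Consequently Lemma~\ref{L: implies O}, Proposition~\ref{P: subvarieties of O} and Lemma~\ref{L: O-2} are unavailable, and the step where $\mathbf V$ ``inherits'' $xyx\approx x^2yx$ and $xty^2x\approx xtxy^2x$ from $\mathbf K$ is false. Passing to proper subvarieties does not rescue it. The submonoid $\{1,b,b^2,c,bc,cb,bcb,cb^2,0\}$ satisfies $xty^2x\approx xty(yx)^2$; a direct check works because only $1,b,b^2$ have nonzero squares there. $K$ violates this identity under $x\mapsto b$, $t\mapsto c$, $y\mapsto a$, so the submonoid generates a proper subvariety of $\mathbf K$. That subvariety still violates $xyx\approx x^2yx$ via $b,c$. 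The identity $K$ actually satisfies is $xyx\approx xyx^2$.

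\textbf{The claim about $\tau_m$ is also wrong.}

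With $x\mapsto a$, $y\mapsto b$ both sides vanish, since $b^2wa=0$ for every $w\in K$. A short case check, using that the only elements of $K$ with nonzero square are $1,a,b,b^2$, shows that $K$ satisfies $\tau_1$, i.e.\ $x^2y^2tx\approx xy^2xtx$. Hence $K$ satisfies every $\tau_m$, and also $x^2y^2tx\approx (xy)^2tx$.

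This makes the plan self-defeating. Had the identities you claimed held in $K$, Lemma~\ref{L: O-2} would make $\mathbf K$ Cross, contradicting (a). Moreover, the witnesses you propose for (a) all follow from three-letter identities of $K$, so they cannot establish non-finite basedness.

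\textbf{What a correct route looks like.}

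The paper does not reprove this proposition; it cites Gusev--Sapir. In the present framework, the identity separating $\mathbf K$ from its proper subvarieties is $xty^2x\approx xty(yx)^2$ (Lemma~\ref{L: does not contain K1}). One can check that $K$ satisfies $xzyxty\approx xzxyxty$ and $\sigma_2$. So a proper subvariety containing $\mathbf F\{\alpha_1\}$ lands in $\mathbf O$ exactly as in Case~2.1 of the main theorem, and is Cross by Lemma~\ref{L: O}. Proper subvarieties not containing $\mathbf F\{\alpha_1\}$ still need a separate argument. Part (a) needs entirely different witness identities.
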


\begin{lemma}[\!{\cite[Lemma~4.3]{Gusev-Sapir-22}}]
\label{L: does not contain K1} 
Let $\mathbf V$ be a monoid variety that contains the variety $\mathbf F\{\alpha_1\}$ and satisfies the identity $xyx \approx xyx^2$.
If $\mathbf V$ does not contain $\mathbf K$, then $\mathbf V$ satisfies $xty^2x \approx xty(yx)^2$.
\end{lemma}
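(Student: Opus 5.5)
The plan is to argue by contraposition at the level of identities. Since $K$ is finite, $K\notin\mathbf V$ means that $\mathbf V$ satisfies some identity $\mathbf u\approx\mathbf v$ that fails in $K$. From such an identity, together with $xyx\approx xyx^2$ and the restrictions forced by $\mathbf F\{\alpha_1\}\subseteq\mathbf V$, I will derive $xty^2x\approx xty(yx)^2$.

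First I need a usable description of the identities of $K$. I would compute the possible values of a word under homomorphisms into $K$, using the defining relations $a^2=a$, $b^2=b^3$, $ca=c$, $bcb^2=bcb$, $abc=ac=ba=b^2c=0$. From this I would extract a combinatorial criterion for when $\mathbf u\approx\mathbf v$ holds in $K$, in the spirit of Lemma~\ref{L: id Q1}. The criterion should involve:
\begin{itemize}
\item[(a)] the sets of simple and multiple letters;
\item[(b)] the relative order of first occurrences, as in $\mathbf B_2$;
\item[(c)] a condition on pairs of letters $x,y$ detecting whether some occurrence of $y$ lies between two occurrences of $x$ after a simple letter, which is the pattern $b\,c\,b$ versus $b^2$.
\end{itemize}
In particular, I expect $K$ to separate $xty^2x$ from $xty(yx)^2$. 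A suitable evaluation is $x\mapsto b$, $t\mapsto c$, $y\mapsto a$ or a similar choice, for which the two sides take different values.

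Next I use the hypothesis $\mathbf F\{\alpha_1\}\subseteq\mathbf V$ via Lemma~\ref{L: isoterm}: certain short words are isoterms for $\mathbf V$ (for instance $xy$, and words recording that a letter occurs after $t$ only once). This limits the identities $\mathbf V$ can satisfy that involve only letters $x,y,t$. Take an identity $\mathbf u\approx\mathbf v$ of $\mathbf V$ failing in $K$. The criterion from the first step locates letters $x,y$ and a simple letter $t$ witnessing the failure. Deleting all other letters (substituting $1$) and applying $xyx\approx xyx^2$ to normalize exponents, I reduce to an identity of the form $xty^2x\approx\mathbf w$ or $xty(yx)^2\approx\mathbf w$ with $\alf(\mathbf w)=\{x,y,t\}$. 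By the isoterm constraints, $\mathbf w$ must begin with $xt$ and the second occurrence of $y$ pattern must differ from the left side. Since $\mathbf V$ satisfies $xyx\approx xyx^2$, every letter occurring twice after $t$ may be raised to any higher power. This reduces the candidates for $\mathbf w$ to a short list such as $xtyxy$, $xtyx^2y$, $xty(yx)^2$, $xtxy^2x$, and so on.

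Finally I go through that list and, for each candidate, derive $xty^2x\approx xty(yx)^2$ using $xyx\approx xyx^2$ and substitutions such as $y\mapsto yx$ or $y\mapsto y^2$. Two cases need special handling:
\begin{itemize}
\item If a candidate identity would collapse $x$ and $y$ before $t$, it contradicts $\mathbf F\{\alpha_1\}\subseteq\mathbf V$, since $\mathbf F\{\alpha_1\}$ violates it.
\item If the reduced identity holds in $K$, it contradicts the choice of $\mathbf u\approx\mathbf v$.
\end{itemize}
The main obstacle is the first step: getting a clean and correct word-problem criterion for $K$, and making sure the reduction by deleting letters preserves the failure in $K$. A failure might only be witnessed by an interaction of more than three letters. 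In that case I would first try to show, by an explicit analysis of the $12$ elements of $K$, that any failing evaluation can be assumed to send all but three letters to $1$ or to $a$. Letters sent to $a$ could then be identified with neighbouring letters, using $a^2=a$ and $ca=c$.
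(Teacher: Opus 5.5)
\textbf{Verdict: genuine gap.} Your overall strategy is reasonable: find a three-letter identity of $\mathbf V$ that fails in $K$ at $x\mapsto b$, $t\mapsto c$, $y\mapsto a$, then derive the target. However, the step that carries the content of the lemma is not done, and the mechanism you propose for it does not work. For comparison, the paper gives no proof of this lemma; it imports it from Gusev--Sapir, so your sketch has to stand on its own.

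\emph{The reduction step.} Deleting letters cannot in general produce an identity with left side $xty^2x$. Take $z_1ty^2z_2\approx z_1ty^2z_2y$. It holds in $\mathbf F$, since it follows from $yyz\approx yyzy$, an instance of $xyxz\approx xyxzx$. So it is compatible with your hypotheses, and it fails in $K$ under $z_1,z_2\mapsto b$, $t\mapsto c$, $y\mapsto a$. Yet $y$ is its only multiple letter, so no deletion yields $xty^2x\approx\mathbf w$.

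Your guessed criterion is also off. A word in $a,b,c$ is nonzero in $K$ iff it lies in $(bc\mid c\mid 1)\,a^\ast b^\ast$. Hence $K$ satisfies $(xy)^2\approx(yx)^2$, and item (b), the order of first occurrences, is not an invariant of $K$.

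In fact no word problem is needed. Put $\psi\colon x\mapsto b,\ y\mapsto a,\ t\mapsto c$. If $\varphi$ separates $\mathbf u\approx\mathbf v$, substitute for each letter $z$ a word $\mathbf w_z\in\{x,y,t\}^\ast$ with $\psi(\mathbf w_z)=\varphi(z)$. This \emph{identifies} letters, which deletion cannot do. The result is an identity $\mathbf u'\approx\mathbf v'$ of $\mathbf V$ with $\psi(\mathbf u')\ne\psi(\mathbf v')$, and you may assume $\mathbf u'\in(xt\mid t\mid 1)\,y^\ast x^\ast$. Equivalently, $K\in\mathbf V$ iff the classes of $\ker\psi$ are stable with respect to $\mathbf V$; this is the same stable-set mechanism the paper uses in Lemma~\ref{L: does not contain H}.

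\emph{The case analysis.} From here your analysis is mis-scoped.
\begin{itemize}
\item The left side need not be $xty^2x$ or $xty(yx)^2$. For example, $ty^2x^2\approx tx^2y^2$ holds in $\mathbf F$ and fails in $K$. So every nonzero class must either be excluded or pushed into the class of $xty^kx^m$, by multiplying on the left by $x$ or $xt$ and checking that the right side still evaluates to $0$.
\item The restrictions on the right side cannot come from isoterms. The word $xty^2x$ is not an isoterm for $\mathbf F\{\alpha_1\}$, since $\mathbf F$ satisfies $xty^2x\approx xty^2x^2\approx xtx^2y^2\approx xtxy^2\approx xtxy^2x$.
\item What you need are genuine invariants of $\mathbf F\{\alpha_1\}$. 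Besides $M(xy),D\in\mathbf F\{\alpha_1\}$, use the fact that every derivation step from its defining identities preserves, for each letter, the number of its occurrences before a simple letter, counted up to~$2$. These give $\mathbf v'=xt\mathbf w$ with $\mathbf w\in\{x,y\}^\ast\setminus y^\ast x^\ast$.
\item Since $\mathbf w$ can be arbitrarily long, there is no short list of candidates; you need a uniform argument. Substituting $t\mapsto ty^2$ makes every letter of $\mathbf w$ a non-first occurrence, so $xyx\approx xyx^2$ collapses its blocks. Multiplying on the right by $x$ then gives $xty^2x\approx xty^2x(yx)^j$ with $j\ge1$. For $j\ge2$, the consequence $(yx)^j\approx(yx)^{j+1}$ of $x^2\approx x^3$ brings this to $xty(yx)^2$.
\item For example, $t\mapsto ty^2$ turns $xty^2x\approx xtxy^2x$ into the target at once.
\end{itemize}
None of this is in your sketch.
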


Let
\[
\mathbf P:=\var\{xyx\approx xyx^2,\,x^2y^2\approx y^2x^2,\,xyzxy\approx yxzxy,\,x^2yty\approx xyxty\approx yx^2ty\}.
\]

\begin{proposition}[\!{\cite[Proposition~3.1]{Gusev-25IJAC}}]
\label{P: P}
The variety $\mathbf P$ is a non-finitely generated almost Cross variety.\qed
\end{proposition}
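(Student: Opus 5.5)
The statement is Proposition~\ref{P: P}, which is quoted from \cite[Proposition~3.1]{Gusev-25IJAC} and closed with \qed. If I had to prove it directly, I would check four things: $\mathbf P$ is not finitely generated; $\mathbf P$ is finitely based, which holds by definition; every proper subvariety of $\mathbf P$ is Cross; and $\mathbf P$ is non-Cross. The last item is immediate from the first.

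I would begin with a solution of the word problem for $\mathbf P$. Modulo $xyx\approx xyx^2$ and $x^2y^2\approx y^2x^2$, each word can be written as a product of blocks separated by simple letters. Within a block, the non-first occurrences of letters form a commuting product of squares. The identities $x^2yty\approx xyxty\approx yx^2ty$ let the first occurrences of $x$ and $y$ be permuted whenever $y$ occurs again later; by $xyzxy\approx yxzxy$ the same holds when both letters recur later together. So I would aim for a canonical form recording, for each block, the multiset data and the order of those first occurrences that cannot be moved. I would then verify that $\mathbf P$ satisfies an identity $\mathbf u\approx\mathbf v$ exactly when the canonical forms agree.

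Using this normal form, I would show that $\mathbf P$ is not finitely generated. The plan is to exhibit an infinite strictly descending or ascending chain of subvarieties defined by identities of growing length, for example $\mathbf P$ together with identities permuting first occurrences inside $x_1\cdots x_n$ followed by a long tail. I would then check that any finite monoid in $\mathbf P$ satisfies all but finitely many of them. Equivalently, $\mathbf P$ is small and locally finite (by Lemma~\ref{L: xyx=xyxx is finitely generated}, a small variety satisfying $xyx\approx xyx^2$ is finitely generated), so non-finite-generation has to come from $\mathbf P$ having infinitely many subvarieties. Concretely, I would show that the lattice $\mathfrak L(\mathbf P)$ contains an infinite chain.

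The main obstacle is the remaining condition: every proper subvariety $\mathbf X$ is Cross. Given a non-trivial identity of $\mathbf X$ that fails in $\mathbf P$, I would show that it implies one member of an explicit family $\{\theta_k\}$. Each $\theta_k$ cuts $\mathbf P$ down to a variety whose subvarieties can all be described by a finite list of identities, analogous to $\Psi_0\cup\Psi_1\cup\Psi_2$ in Lemma~\ref{L: O_k}. That subvariety is then small and finitely based, and Lemma~\ref{L: xyx=xyxx is finitely generated} gives finite generation. The delicate part is the case analysis of an arbitrary identity against the canonical form; I would first isolate two-letter patterns and then use substitutions that collapse letters to reduce to them.
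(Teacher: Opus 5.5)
The paper does not prove this statement; it quotes it from an earlier paper, so your outline has to stand on its own, and it does not yet. The overall shape is right: finitely based by definition, local finiteness from $xyx\approx xyx^2$, proper subvarieties Cross, non-finite generation. But every step with real content is deferred.

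\textbf{The missing key lemma.} The step you call the main obstacle, that an identity failing in $\mathbf P$ implies a member of some family $\{\theta_k\}$, is left without even a candidate family. The relevant family is $\{\sigma_m\}$ (compare Lemma~\ref{L: does not contain P}). Only the first occurrences of two letters $x,y$ are swapped, and the growing parameter is the length of the alternating tail $t_1xt_2yt_3x\cdots t_m\mathbf a_m$. Since $\sigma_m$ implies $\sigma_{m+1}$ by right multiplication by $t_{m+1}\mathbf a_{m+1}$, the varieties $\mathbf P\{\sigma_m\}$ form an ascending chain. Your tentative family, permuting first occurrences of $x_1,\dots,x_n$, does not match this.

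You also need not build a finite identity list analogous to $\Psi_0\cup\Psi_1\cup\Psi_2$, because $\mathbf P\subseteq\mathbf O$. In $\mathbf P$ one has $xzyxty\approx xzyx^2ty\approx xzx^2yty\approx xzxyty$ and $xzytxy\approx xzytx^2y^2\approx xzyty^2x^2\approx xzytyx$. The defining identities of $\mathbf O$ then follow by substituting $t\mapsto xt$ in the first, multiplying the second by $x$ on the right, and using $xyx^2\approx xyx$. Hence Lemma~\ref{L: O} already makes Cross every subvariety of $\mathbf P$ satisfying $x^2yzx\approx x^2yxzx$ or some $\sigma_m$. Lemma~\ref{L: Q nsubseteq V} supplies $x^2yzx\approx x^2yxzx$ when $Q\notin\mathbf X$, after discarding commutative and idempotent $\mathbf X$ via Corollary~\ref{C: com id Cross} and Lemma~\ref{L: xy isoterm}; this is the pattern of Case~2.1 in the proof of Theorem~\ref{T: main result}.

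What is genuinely missing is a proof that a proper subvariety $\mathbf X$ with $Q\in\mathbf X$ satisfies some $\sigma_m$. For $\mathbf F\{\alpha_1\}\vee\mathbf Q\subseteq\mathbf X$ the paper records this as Lemma~\ref{L: does not contain P}. However, that lemma is attributed to a source that includes the very proposition you are proving, so you cannot invoke it. Your ``case analysis against the canonical form'' with letter-collapsing substitutions gives no indication of how this would be obtained.

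\textbf{The non-finite-generation argument.} The sentence ``Equivalently, $\mathbf P$ is small and locally finite'' asserts the opposite of what you need. Exhibiting an infinite chain in $\mathfrak L(\mathbf P)$ does not by itself rule out finite generation; as the introduction notes, finitely generated varieties can have infinite subvariety lattices. What does work, once all proper subvarieties are known to be small, is the following. If $\mathbf P$ were generated by a finite monoid, then by local finiteness it would be generated by a finite relatively free monoid $F$ of some rank $n$. A proper subvariety does not contain $F$, so it satisfies one of the finitely many (up to equivalence modulo $\mathbf P$) $n$-letter identities failing in $\mathbf P$. Each of the corresponding varieties $\mathbf P\{\mathbf u\approx\mathbf v\}$ is proper, hence small, so $\mathfrak L(\mathbf P)$ would be finite. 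Together with Lemma~\ref{L: xyx=xyxx is finitely generated}, non-finite generation is therefore equivalent to $\mathfrak L(\mathbf P)$ being infinite.

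You still have to prove that infiniteness, for instance by showing that the chain $\mathbf P\{\sigma_m\}$ never stabilizes, which in particular means $\mathbf P$ satisfies no $\sigma_m$. That requires either a real solution of the word problem for these varieties or monoids in $\mathbf P$ satisfying $\sigma_{m+1}$ but not $\sigma_m$. Your sketched canonical form provides neither.
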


\begin{lemma}[\!{\cite[Proposition~3.1 and Lemma~3.3]{Gusev-25IJAC}}]
\label{L: does not contain P}
Let $\mathbf V$ be a monoid variety such that $\mathbf F\{\alpha_1\}\vee\mathbf Q\subseteq\mathbf V$ satisfying the identity $xyx\approx xyx^2$.
If $\mathbf P\nsubseteq\mathbf V$, then $\mathbf V$ satisfies the identity $\sigma_m$ for some $m\ge2$.
\end{lemma}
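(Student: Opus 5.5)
The plan is to extract a single identity of $\mathbf V$ that fails in $\mathbf P$, and then push it, by substitutions and by identities that hold in $\mathbf V$, into the shape $\sigma_m$. Since $\mathbf P\nsubseteq\mathbf V$, there is an identity $\mathbf u\approx\mathbf v$ that holds in $\mathbf V$ but fails in $\mathbf P$. We may multiply both sides on the right by the alternating word $\prod_{i=1}^m t_i\mathbf a_i$ for large $m$; I expect $\mathbf P$ still to distinguish the resulting sides, since the new tail only adds occurrences of $x,y$ after fresh simple letters. Two inclusions then restrict the shape of $\mathbf u\approx\mathbf v$:
\begin{itemize}
\item since $Q\in\mathbf V$, Lemma~\ref{L: id Q1} forces $\mathbf u$ and $\mathbf v$ to have the same simple letters $t_1,\dots,t_m$ in the same order, and the blocks between consecutive simple letters to have equal alphabets;
\item since $\mathbf F\{\alpha_1\}\subseteq\mathbf V$, the relevant small words (such as $xy$ and the words separating $\mathbf F\{\alpha_1\}$ from $\mathbf D$) are isoterms, which rules out identities that merge or delete occurrences in an essential way.
\end{itemize}

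Next, I would work modulo the identities defining $\mathbf P$, not all of which need hold in $\mathbf V$. Using only $xyx\approx xyx^2$ and the Rees-quotient information above, I would normalize both sides block by block. Every occurrence of a letter after its second occurrence can be moved or absorbed, and the first two occurrences of each multiple letter determine the word up to $xyx\approx xyx^2$. Because the identity fails in $\mathbf P$, some pair of multiple letters $x,y$ must have their relative order of occurrences differ in some block of $\mathbf u$ and $\mathbf v$. This difference must be one that the $\mathbf P$-identities $xyzxy\approx yxzxy$ and $x^2yty\approx xyxty\approx yx^2ty$ cannot repair. I would then delete all other letters, that is, substitute $1$ for them. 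I would also rename the simple letters separating later occurrences of $x$ and $y$ into $t_1,t_2,\dots$. Together these should give an identity of $\mathbf V$ of the form
\[
\mathbf h\,xy\,\mathbf c\approx\mathbf h\,yx\,\mathbf c ,
\]
where $\mathbf c$ is a product of $t_i$'s and single occurrences of $x$ or $y$.

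The main obstacle is to show that $\mathbf c$ can be taken to be exactly $\prod_{i=1}^m t_i\mathbf a_i$ and that the prefix $\mathbf h$ can be removed. Here I would argue as in the proof of Lemma~\ref{L: O-2}. If two consecutive later occurrences in $\mathbf c$ are of the same letter, say $\cdots t_ix\cdots t_jx\cdots$ with no $y$ between them, then substituting $t_j\mapsto t_j$ and merging via $xyx\approx xyx^2$ lets the intermediate occurrence be dropped. If a block contains both $x$ and $y$, the identity is already one that holds in $\mathbf P$ or is implied by a shorter alternation, so that block can be cut. Removing $\mathbf h$ should follow because occurrences of $x,y$ before the critical pair are first occurrences; substituting $1$ for the letters in $\mathbf h$ leaves the critical pair $xy$ versus $yx$ intact. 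The pattern after the critical pair must alternate, since otherwise the identity would follow from $x^2yty\approx xyxty\approx yx^2ty$ and so hold in $\mathbf P$. Finally, $m\ge2$ is forced: $\sigma_1$, i.e.\ $xyt_1x\approx yxt_1x$, together with $\sigma_0$ would collapse $\mathbf F\{\alpha_1\}$ or $Q$, contradicting their inclusion in $\mathbf V$. Checking that case carefully against Lemma~\ref{L: id Q1} is where I expect most of the work.
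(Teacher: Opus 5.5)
Your outline has a genuine gap at its core. The whole lemma comes down to knowing exactly which identities fail in $\mathbf P$. You replace that knowledge with assertions: that the failure of $\mathbf u\approx\mathbf v$ in $\mathbf P$ is a difference ``that the $\mathbf P$-identities cannot repair'', and that deleting and renaming letters ``should give'' an identity $\mathbf h\,xy\,\mathbf c\approx\mathbf h\,yx\,\mathbf c$. Starting from an arbitrary identity of $\mathbf V$ that fails in $\mathbf P$ gives you no control over its shape. Nothing in your argument ensures that the identity still fails in $\mathbf P$ after the deletions. Nor does it ensure that the two sides end up differing by a single transposition of two first occurrences rather than in some more complicated way. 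You also cannot ``work modulo the identities defining $\mathbf P$'': they need not hold in $\mathbf V$, and $\sigma_m$ must be derived from identities of $\mathbf V$ alone. The paper does not reprove this step; it imports it from \cite[Proposition~3.1 and Lemma~3.3]{Gusev-25IJAC}, where it rests on the analysis of $\mathbf P$ and its subvarieties carried out there. For the kind of device that is missing, compare the proof of Lemma~\ref{L: does not contain H}. There the extracted identity is forced to have a rigid left-hand side: a word from a set $\mathcal A$ that is stable with respect to the larger variety but not the smaller one, obtained via \cite[Corollary~3.6]{Sapir-21}. Only then does Lemma~\ref{L: id Q1} pin down the right-hand side.

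Several concrete steps are also false. Your normalization claim is that later occurrences can be absorbed, so that the first two occurrences of each letter determine a word modulo $xyx\approx xyx^2$; this fails in $\mathbf V$. The words $xyxz$ and $xyxzx$ agree in the first two occurrences of every letter, yet $Q\in\mathbf V$ separates them by Lemma~\ref{L: id Q1}, since their last blocks have alphabets $\emptyset$ and $\{x\}$. Your reason for appending the alternating tail is that it ``only adds occurrences of $x,y$ after fresh simple letters''; that reason is invalid. By Lemma~\ref{L: id Q1}, $Q$ satisfies all defining identities of $\mathbf P$ but violates $x^2y\approx xyx$, so $\mathbf P$ violates $x^2y\approx xyx$. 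Yet $x^2y\,t_1xt_2y\approx xyx\,t_1xt_2y$ is an instance of $x^2yty\approx xyxty$ and therefore holds in $\mathbf P$. Your use of $\mathbf F\{\alpha_1\}$ through isoterms also yields nothing. Since $\mathbf V$ satisfies $xyx\approx xyx^2$, no word with a repeated letter is an isoterm for $\mathbf V$. So this hypothesis can only enter through specific identities that $\mathbf F\{\alpha_1\}$ violates. Finally, the place where you expect most of the work is trivial. Multiplying $\sigma_m$ on the right by $t_{m+1}\mathbf a_{m+1}$ yields $\sigma_{m+1}$, so once any $\sigma_m$ holds in $\mathbf V$, some $\sigma_m$ with $m\ge2$ holds as well.
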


\subsection{The varieties satisfying the identities $xyx\approx x^2yx\approx xyx^2$ and $x^2y^2\approx (xy)^2$}
\label{subsec: xyx=xxyx=xyxx,xxyy=xyxy}

Let $\mathbf E$ denote the variety generated by the monoid 
\[
E := \langle a, b, c \mid a^2 = ab = 0, ba = ca = a, b^2 = bc = b, c^2= cb = c \rangle\cup\{1\}=\{a,b,c,ac,0,1\}.
\]
It is shown in~\cite[Section~14]{Lee-Li-11} that 
\[
\mathbf E:=\var\{xyx\approx x^2yx\approx xyx^2,\, xy^2x \approx x^2y^2\}.
\]
Put
\[
\mathbf R:=\mathbf E\{xyzxy \approx xyzyx\}\ \text{ and }\ \mathbf R_m:=\mathbf E\{\overleftarrow{\sigma_m}\},
\]
where
\[
\overleftarrow{\sigma_m}:\enskip   \biggl(\prod_{i=m}^1 \mathbf a_it_i\biggr) xy \approx  \biggl(\prod_{i=m}^1 \mathbf a_it_i\biggr)yx,\ m\ge 2.
\]

\begin{proposition}[\!{\cite[Proposition~5.11]{Jackson-Lee-18}}]
\label{P: E1}
\noindent
\begin{itemize}
\item[\textup{(i)}] The lattice $\mathfrak L(\mathbf E)$ has the form shown in Fig.~\ref{pic: L(E1)}.
\item[\textup{(ii)}] The variety $\mathbf R$ is a non-finitely generated almost Cross variety.
\end{itemize}
\end{proposition}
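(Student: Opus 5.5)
The plan is to follow the standard three-step scheme: solve the word problem for $\mathbf E$, use it to reduce every identity modulo the defining identities of $\mathbf E$ to a short list of ``canonical'' identities, and then read off $\mathfrak L(\mathbf E)$. First I would derive a normal form for words modulo
\[
\Sigma_E:=\{xyx\approx x^2yx\approx xyx^2,\ xy^2x\approx x^2y^2\}.
\]
Write a word as $\mathbf w=\mathbf w_0t_1\mathbf w_1\cdots t_m\mathbf w_m$ with $\simple(\mathbf w)=\{t_1,\dots,t_m\}$. Using $xyx\approx x^2yx\approx xyx^2$, every non-first occurrence of a multiple letter can be squared. Then $xy^2x\approx x^2y^2$ lets one push later occurrences leftwards past squares. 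This should show that the only information $\mathbf E$ retains is:
\begin{itemize}
\item[(a)] the sequence of simple letters;
\item[(b)] the content of each block $\mathbf w_i$;
\item[(c)] the relative order of first occurrences of letters inside each block, for letters not occurring in earlier blocks;
\end{itemize}
the last item being refined by where a letter's first occurrence sits relative to the simple letters. I would verify this against the generator $E$, in the spirit of Lemma~\ref{L: id Q1}, by evaluating words in $E$: the elements $b,c$ act like a left regular band and $a$ detects first occurrences.

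With this description, take any identity $\mathbf u\approx\mathbf v$ of $\mathbf E$ failing in $E$. I would show by deletion and identification of letters that, modulo $\Sigma_E$, it is equivalent to one of finitely many ``small'' identities defining the small subvarieties, such as $x\approx x^2$, $xy\approx yx$, $x^2y\approx xyx$, and the like. The remaining possibility is a family in which $xy$ must be swapped after a prefix. The swap in that family is controlled by an alternating chain $\mathbf a_mt_m\cdots\mathbf a_1t_1$ that forces the first occurrences of $x,y$ to appear in a fixed order. This yields precisely $\overleftarrow{\sigma_m}$, and in the limit $xyzxy\approx xyzyx$.

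One then checks that the $\overleftarrow{\sigma_m}$ form a strictly weakening chain:
\begin{itemize}
\item[(i)] $\overleftarrow{\sigma_m}$ implies $\overleftarrow{\sigma_{m+1}}$, by identifying $t_{m+1}$ with $1$ in a suitable way;
\item[(ii)] $\overleftarrow{\sigma_{m+1}}$ does not imply $\overleftarrow{\sigma_m}$, since $\mathbf E$ together with $\overleftarrow{\sigma_{m+1}}$ cannot reach the other side of $\overleftarrow{\sigma_m}$;
\item[(iii)] all $\overleftarrow{\sigma_m}$ follow from $xyzxy\approx xyzyx$ modulo $\Sigma_E$.
\end{itemize}
This gives $\mathbf R_2\subset\mathbf R_3\subset\cdots\subset\mathbf R$ and the picture in Fig.~\ref{pic: L(E1)}, proving~(i). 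The main obstacle is the reduction step: showing that an arbitrary identity not implied by the small ones is equivalent to some $\overleftarrow{\sigma_m}$ or to $xyzxy\approx xyzyx$. I would attack it by induction on the length of the longest alternating chain of $x,y$-blocks separated by simple letters preceding the first position where $\mathbf u$ and $\mathbf v$ differ.

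For~(ii), the chain shows that $\mathbf R$ has infinitely many subvarieties, so $\mathbf R$ is not Cross. By~(i), every proper subvariety of $\mathbf R$ satisfies some $\overleftarrow{\sigma_m}$ and so lies in some $\mathbf R_m$. The interval below $\mathbf R_m$ is finite and each member is defined by finitely many identities from the list, so each proper subvariety is finitely based and small. It is also finitely generated by Lemma~\ref{L: xyx=xyxx is finitely generated}, hence Cross, and $\mathbf R$ is almost Cross. Finally, suppose $\mathbf R$ were generated by a finite monoid $M$ with $|M|=n$. Substituting elements of $M$ into $\overleftarrow{\sigma_m}$ with $m>n$, a pigeonhole argument on the prefixes $\prod \mathbf a_it_i$ shows that some segment of the chain is absorbed, since the value stabilizes in the aperiodic monoid $M$. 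Then $\overleftarrow{\sigma_m}$ reduces to a consequence of $\Sigma_E\cup\{xyzxy\approx xyzyx\}$, so it holds in $M$. Hence $\mathbf R\subseteq\mathbf R_m\subset\mathbf R$, a contradiction; so $\mathbf R$ is not finitely generated.
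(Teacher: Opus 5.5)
\paragraph{The word problem for $\mathbf E$ is wrong.} Your invariants (a)--(c) are not the word problem for $\mathbf E$, and they discard exactly the information that the chain $\mathbf R_2\subset\mathbf R_3\subset\cdots\subset\mathbf R\subset\mathbf E$ encodes. Both sides of $\overleftarrow{\sigma_m}$, and both sides of $xyzxy\approx xyzyx$, have the same simple letters and the same block contents. Their last block consists of $x,y$, which already occur earlier. So by (a)--(c) these identities would hold in $\mathbf E$, forcing $\mathbf R_m=\mathbf R=\mathbf E$.

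Computing in $E$ correctly: two occurrences sent into $\{a,ac\}$ give $0$. The set $\{1,b,c\}$ is a left-zero band with identity. For $g\in\{1,b,c\}$ we have $ga=a$ and $g\cdot ac=ac$, while $a\cdot 1=a$, $ab=0$ and $ac\ne 0$. Hence $E\models\mathbf u\approx\mathbf v$ if and only if $\ini(\mathbf u)=\ini(\mathbf v)$, $\simple(\mathbf u)=\simple(\mathbf v)$, and for each $t\in\simple(\mathbf u)$ the suffixes of $\mathbf u$ and $\mathbf v$ following $t$ have the same initial part. So $a$ does not detect first occurrences. It detects the order in which letters reappear after $t$, including letters that occur before $t$; for example, the assignment $t\mapsto a$, $x\mapsto c$, $y\mapsto b$ separates $xytxy$ from $xytyx$.

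\paragraph{The chain claims.} Your item (iii) is reversed. It is $\overleftarrow{\sigma_m}$ that implies $xyzxy\approx xyzyx$ modulo $\Sigma_E$: set $t_2=\cdots=t_m=1$ and use the word problem to shorten the alternating prefix to $xy$ or $yx$. This is what gives $\mathbf R_m\subseteq\mathbf R$. With your direction, $\mathbf R\subseteq\mathbf R_m\subset\mathbf R$ would follow outright, so your claims are mutually inconsistent. In (i), $\overleftarrow{\sigma_{m+1}}$ follows from $\overleftarrow{\sigma_m}$ by left multiplication by $\mathbf a_{m+1}t_{m+1}$, not by setting $t_{m+1}=1$.

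Strictness in (ii) is only asserted, and it needs an invariant. For instance: every word $E$-equivalent to $\mathbf a_mt_m\cdots\mathbf a_1t_1xy$ has the form $\mathbf a_m^{e_m}t_m\cdots\mathbf a_1^{e_1}t_1\mathbf z$ with $\ini(\mathbf z)=xy$. A single application of $\overleftarrow{\sigma_{m+1}}$, in either direction, can alter $\ini(\mathbf z)$ only if each of the $m+1$ factors $\theta(t_{m+1}),\dots,\theta(t_1)$ contains a simple letter, because the blocks before $t_1$ are one-letter blocks. That is impossible when there are only $m$ simple letters.

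Most importantly, the reduction of an arbitrary identity to finitely many canonical ones is only announced. That reduction is the actual content of (i); the paper does not reprove it but imports it from Jackson and Lee.

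\paragraph{What does work.} Your deduction of (ii) from (i) is fine. The non-finite-generation argument can also be saved, though aperiodicity is not the reason. Suppose $\phi(\mathbf a_mt_m\cdots\mathbf a_jt_j)=\phi(\mathbf a_mt_m\cdots\mathbf a_kt_k)$ with $j>k$; allowing the empty prefix, $m\ge|M|$ suffices for this. Then you may square $\mathbf s=\mathbf a_{j-1}t_{j-1}\cdots\mathbf a_kt_k$ on both sides. After that, the block before $t_{k-1}$ contains both $x$ and $y$, so the resulting identity follows from $\Sigma_E\cup\{xyzxy\approx xyzyx\}$. If $k=1$, the new identity already holds in $\mathbf E$.
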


\begin{figure}[htb]
\unitlength=1mm
\linethickness{0.4pt}
\begin{center}
\begin{picture}(46,113)
\put(10,33){\circle*{1.33}}
\put(10,53){\circle*{1.33}}
\put(20,43){\circle*{1.33}}
\put(20,63){\circle*{1.33}}
\put(30,3){\circle*{1.33}}
\put(30,13){\circle*{1.33}}
\put(30,23){\circle*{1.33}}
\put(30,33){\circle*{1.33}}
\put(30,53){\circle*{1.33}}
\put(30,73){\circle*{1.33}}
\put(30,83){\circle*{1.33}}
\put(30,93){\circle*{1.33}}
\put(30,103){\circle*{1.33}}
\put(30,113){\circle*{1.33}}
\put(40,43){\circle*{1.33}}
\put(40,63){\circle*{1.33}}
\put(20,63){\line(1,1){10}}
\put(30,73){\line(1,-1){10}}

\put(20,63){\line(1,1){10}}
\put(30,73){\line(1,-1){10}}
\put(40,63){\line(-1,-1){20}}
\put(30,3){\line(0,1){30}}
\put(30,33){\line(-1,1){20}}
\put(10,53){\line(0,-1){20}}
\put(10,33){\line(1,-1){20}}
\put(30,33){\line(1,1){10}}
\put(40,43){\line(-1,1){20}}
\put(20,63){\line(-1,-1){10}}
\put(30,73){\line(0,1){23}}
\put(30,103){\line(0,1){10}}
\put(30,100){\makebox(0,0)[cc]{$\vdots$}}

\put(31,23){\makebox(0,0)[lc]{$\mathbf M(x)$}}
\put(31,32){\makebox(0,0)[lc]{$\mathbf M(xy)$}}
\put(30,116){\makebox(0,0)[cc]{$\mathbf E$}}
\put(31,83){\makebox(0,0)[lc]{$\mathbf R_2$}}
\put(31,93){\makebox(0,0)[lc]{$\mathbf R_3$}}
\put(31,103){\makebox(0,0)[lc]{$\mathbf R$}}
\put(21.5,43){\makebox(0,0)[lc]{$\mathbf D$}}
\put(41,43){\makebox(0,0)[lc]{$\overleftarrow{\mathbf D}$}}
\put(9,33){\makebox(0,0)[rc]{$\mathbf B_2$}}
\put(41,63){\makebox(0,0)[lc]{$\mathbf Q$}}
\put(31,13){\makebox(0,0)[lc]{$\mathbf M(1)$}}
\put(30,0){\makebox(0,0)[cc]{$\mathbf M(\emptyset)$}}
\end{picture}
\end{center}
\caption{The lattice $\mathfrak L(\mathbf E)$}
\label{pic: L(E1)}
\end{figure}

\begin{lemma}[\!{\cite[Fact~5.3]{Gusev-Sapir-26}}]
\label{L: nsub B2} 
The variety $\mathbf E$ is the largest variety satisfying the identities $xyx\approx x^2yx\approx xyx^2$ and $x^2y^2\approx (xy)^2$ which does not contain $\overleftarrow{\mathbf B_2}$.
\end{lemma}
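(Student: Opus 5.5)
The plan is to prove two things: $\mathbf E$ satisfies the two given identities and does not contain $\overleftarrow{\mathbf B_2}$, and every variety $\mathbf V$ with these identities and without $\overleftarrow{\mathbf B_2}$ satisfies $xy^2x\approx x^2y^2$, so that $\mathbf V\subseteq\mathbf E$.

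For the first part I would use the identity basis of $\mathbf E$ from~\cite[Section~14]{Lee-Li-11}. The identities $xyx\approx x^2yx\approx xyx^2$ belong to that basis. For $(xy)^2\approx x^2y^2$ I would derive it directly from $xyx\approx x^2yx\approx xyx^2$ together with $xy^2x\approx x^2y^2$. The idea is to first insert extra copies of letters into non-first occurrences, and then use $x^2y^2\approx xy^2x$ to reorder. I expect this to be a routine but slightly fiddly rewriting.

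To see that $\overleftarrow{\mathbf B_2}\nsubseteq\mathbf E$, I would use the dual of Lemma~\ref{L: word problem LRB}. By that dual, an identity holds in $\overleftarrow{\mathbf B_2}$ exactly when both sides have the same order of last occurrences of letters. In $xy^2x$ the last occurrences come in the order $y,x$, while in $x^2y^2$ they come in the order $x,y$. Hence $\overleftarrow{\mathbf B_2}$ violates $xy^2x\approx x^2y^2$, and so it is not contained in $\mathbf E$.

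For the maximality part, let $\mathbf V$ satisfy the two identities with $\overleftarrow{\mathbf B_2}\nsubseteq\mathbf V$. Then $\mathbf V$ has an identity $\mathbf u\approx\mathbf v$ that fails in $\overleftarrow{\mathbf B_2}$. By the dual of Lemma~\ref{L: word problem LRB}, either $\alf(\mathbf u)\ne\alf(\mathbf v)$, or there are letters $x,y$ whose last occurrences appear in opposite orders in $\mathbf u$ and $\mathbf v$.

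In the first case, deleting all letters but one gives an identity of the form $x^p\approx 1$ or $x^p\approx x^q$ with the same alphabet issue. This makes $\mathbf V$ trivial, or at least forces identities that already imply $xy^2x\approx x^2y^2$. In the second case, I delete every letter other than $x,y$ and obtain a two-letter identity in which the last occurrences are ordered $x,y$ on the left and $y,x$ on the right. I then multiply this identity on the left by $xy$. This makes both letters multiple and makes both sides begin with $x$, while keeping the order of last occurrences.

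The key step is a normal form for two-letter words modulo $\Sigma:=\{xyx\approx x^2yx\approx xyx^2,\ x^2y^2\approx(xy)^2\}$. Take a word that starts with $x$ and in which both $x$ and $y$ are multiple. I claim that modulo $\Sigma$ it equals either $x^2y^2$ or $xy^2x$. To show this, I would use $xyx\approx x^2yx\approx xyx^2$ to reduce every block of each letter to length at most~$2$ and to delete inner occurrences. This leaves only the patterns $x^2y^2$, $xy^2x$, $(xy)^2$, $xyx^2y$, and a few similar ones. I then use $(xy)^2\approx x^2y^2$ to absorb the alternating patterns into these two forms.

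These two normal forms are told apart by the order of last occurrences. Since $\Sigma$ holds in $\overleftarrow{\mathbf B_2}$, this order is preserved by every derivation from $\Sigma$. Therefore the left side of the multiplied identity reduces to $x^2y^2$ and the right side reduces to $xy^2x$. Consequently $\mathbf V$ satisfies $x^2y^2\approx xy^2x$, and so $\mathbf V\subseteq\mathbf E$.

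I expect the main obstacle to be this normal-form reduction: checking carefully that every two-letter word with both letters multiple and first letter $x$ collapses to one of the two forms using only $\Sigma$.
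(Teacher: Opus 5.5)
Your argument is correct. The paper gives no proof of this lemma; it imports it from \cite[Fact~5.3]{Gusev-Sapir-26}. So your proposal is a self-contained verification rather than a variant of an argument in the text.

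The three ingredients are exactly what is needed:
\begin{itemize}
\item $(xy)^2\approx xy^2xy\approx x^2y^3\approx x^2y^2$ holds in $\mathbf E$;
\item $\overleftarrow{\mathbf B_2}$ violates $xy^2x\approx x^2y^2$ by the dual of Lemma~\ref{L: word problem LRB};
\item a two-letter identity with opposite orders of last occurrences, prefixed by $xy$, collapses modulo $\Sigma$ to $x^2y^2\approx xy^2x$.
\end{itemize}

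Two points in your sketch should be corrected.

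\textbf{Deleting inner occurrences.} The identities $xyx\approx x^2yx\approx xyx^2$ alone cannot delete inner occurrences. They only let you change the length of any block of a multiple letter. Indeed, the set $\{x^ay^bx^c\mid a,b,c\ge1\}$ is closed under them, so they do not even give $xyxyx\approx xy^2x$.

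The reduction you need is an induction on the number $n$ of alternating blocks. If $n\ge4$, shrink every block to length~$1$. Then replace the prefix $xyxy$ by $x^2y^2$ using $(xy)^2\approx x^2y^2$; this lowers $n$ by~$2$ and keeps both letters multiple. Hence a word starting with $x$ in which $x,y$ are multiple equals $x^2y^2$ if $n$ is even and $xy^2x$ if $n$ is odd, that is, according to whether it ends in $y$ or in $x$. This removes the obstacle you anticipated. It also makes the appeal to $\overleftarrow{\mathbf B_2}\models\Sigma$ unnecessary, since every step visibly preserves the last letter.

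\textbf{The case $\alf(\mathbf u)\ne\alf(\mathbf v)$.} Pick a letter in the symmetric difference of the alphabets and delete all others. This gives $x^p\approx 1$ (or $1\approx x^q$) with $p\ge1$, which together with $x^2\approx x^3$ makes $\mathbf V$ trivial. The alternative $x^p\approx x^q$ you mention does not arise in this case.
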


\begin{lemma}[\!{\cite[Observation~5.8, Fact~6.2 and the dual results]{Gusev-Sapir-26}}]
\label{L: E23}
Let $\mathbf V$ be a variety such that $\mathbf Q \vee \mathbf B_2 \vee \overleftarrow{\mathbf B_2} \subseteq \mathbf V \subseteq \mathbf E \vee \overleftarrow{\mathbf E}$. 
Then $\mathbf V$ can be defined within $\mathbf E \vee \overleftarrow{\mathbf E}$ by the identities 
\[
\begin{aligned}
&\tau_m:\enskip  x^2y^2\biggl(\prod_{i=1}^m t_i\mathbf a_i\biggr)  \approx  xy^2x\biggl(\prod_{i=1}^m t_i\mathbf a_i\biggr),\ m\ge1,\\
&\tau_\infty:\enskip xyzxy\approx xyxzxy,\\
&\overleftarrow{\tau_m}:\enskip  \biggl(\prod_{i=m}^1 \mathbf a_it_i\biggr) x^2y^2 \approx  \biggl(\prod_{i=m}^1 \mathbf a_it_i\biggr) yx^2y,\ m\ge1,\\
&\overleftarrow{\tau_\infty}:\enskip xyzxy\approx xyzyxy.
\end{aligned}
\]
\end{lemma}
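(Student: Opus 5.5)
The approach is to reduce an arbitrary identity of $\mathbf V$ to a canonical shape using the three subvarieties that $\mathbf V$ is assumed to contain, and then to read off which of the identities $\tau_m$, $\tau_\infty$, $\overleftarrow{\tau_m}$, $\overleftarrow{\tau_\infty}$ it is equivalent to (or a conjunction of).

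First I would fix an identity basis for the ambient variety. Since $\mathbf E$ satisfies $xyx\approx x^2yx\approx xyx^2$ and $x^2y^2\approx(xy)^2$ by Lemma~\ref{L: nsub B2}, and these identities are self-dual, $\mathbf E\vee\overleftarrow{\mathbf E}$ satisfies them too. I expect the following description of the join. Modulo these identities, every word is determined by four pieces of data:
\begin{itemize}
\item[\textup{(a)}] its sequence of simple letters;
\item[\textup{(b)}] the content of each block $\mathbf u_i$ between consecutive simple letters;
\item[\textup{(c)}] the relative order of the first occurrences of the multiple letters;
\item[\textup{(d)}] the relative order of the last occurrences of the multiple letters, with the position of each such occurrence recorded relative to the blocks.
\end{itemize}
I would then check that $\mathbf E$ "sees" the last occurrences and $\overleftarrow{\mathbf E}$ "sees" the first occurrences, via Lemma~\ref{L: x=xx}-type arguments and the generators $E$ and $\overleftarrow E$. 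This would give a word-problem solution for $\mathbf E\vee\overleftarrow{\mathbf E}$.

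Now let $\mathbf u\approx\mathbf v$ be an identity of $\mathbf V$ that fails in $\mathbf E\vee\overleftarrow{\mathbf E}$. Because $\mathbf Q\subseteq\mathbf V$, Lemma~\ref{L: id Q1} shows that conditions (a) and (b) agree for $\mathbf u$ and $\mathbf v$. Because $\mathbf B_2,\overleftarrow{\mathbf B_2}\subseteq\mathbf V$, Lemma~\ref{L: word problem LRB} and its dual show that $\ini(\mathbf u)=\ini(\mathbf v)$ and that the final parts agree. So $\mathbf u$ and $\mathbf v$ can differ only in how first occurrences or last occurrences of multiple letters interleave with other occurrences inside blocks. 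The remaining discrepancies must therefore be local swaps, of the form $x^2y^2\leftrightarrow xy^2x$ (for last occurrences) or the dual swap (for first occurrences).

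For each such swap, I would measure how far the witnessing context reaches. A swap of the last occurrences of $x,y$ is only visible if some later material separates $x$ from $y$. The longest alternating chain $t_1\mathbf a_1t_2\mathbf a_2\cdots t_m\mathbf a_m$ of simple letters and alternating occurrences of $x,y$ that follows the swap determines $m$. The case where no simple letter intervenes, as in $xyzxy$, corresponds to $\tau_\infty$. Deleting all letters other than $x$, $y$ and the relevant $t_i$ should then show that $\mathbf u\approx\mathbf v$ implies $\tau_m$ for that minimal $m$, or implies $\tau_\infty$, and dually for first occurrences. Conversely, I would show that $\mathbf u\approx\mathbf v$ follows from the collection of these $\tau$'s by performing the swaps one at a time.

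I would also verify the chain of implications $\tau_1\Rightarrow\tau_2\Rightarrow\cdots\Rightarrow\tau_\infty$ modulo the basis: appending a further $t_{m+1}\mathbf a_{m+1}$ preserves the identity. Together with the previous step, this shows that $\mathbf V$ is defined within $\mathbf E\vee\overleftarrow{\mathbf E}$ by the strongest $\tau_m$ or $\tau_\infty$ it satisfies and the strongest dual one.

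The main obstacle is the extraction step. Given an arbitrary identity whose differences are spread over several blocks and several pairs of letters, one has to choose a single pair $x,y$ and a substitution that yields exactly $\tau_m$ with the minimal $m$, without accidentally collapsing the alternating chain. My first attempt would be to pick the rightmost discrepancy in last occurrences. I would then delete every letter outside a maximal alternating chain to its right, and use $xyx\approx xyx^2\approx x^2yx$ to normalise exponents to $2$.
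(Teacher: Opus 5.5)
There are two genuine gaps in your sketch. The paper gives no argument of its own to compare against; it quotes the lemma from \cite[Observation~5.8, Fact~6.2]{Gusev-Sapir-26}.

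\textbf{The word problem for $\mathbf E\vee\overleftarrow{\mathbf E}$.} The description everything else rests on is wrong. As stated, your data (a)--(d) are exactly the invariants of $\mathbf Q\vee\mathbf B_2\vee\overleftarrow{\mathbf B_2}$ (Lemmas~\ref{L: id Q1}, \ref{L: word problem LRB} and the dual of the latter). If they solved the word problem for $\mathbf E\vee\overleftarrow{\mathbf E}$, the interval would be a single variety and no identity $\mathbf u\approx\mathbf v$ would be left to analyse. You also have the roles reversed: $\mathbf E\supseteq\mathbf B_2$ satisfies $xy^2x\approx x^2y^2$, so it reads first occurrences, not last ones.

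Evaluating in $E$ shows what actually happens. Only a simple letter can be sent to $a$ without the value becoming $0$, and the value then records the first later letter not sent to $1$. Hence $\mathbf E$ satisfies $\mathbf u\approx\mathbf v$ if and only if two conditions hold: the sequences of simple letters coincide, and $\ini(\mathbf u_i)=\ini(\mathbf v_i)$ for every block $\mathbf u_i$ between consecutive simple letters. Dually, $\overleftarrow{\mathbf E}$ compares $\mathrm{fin}(\mathbf u_i)$, the order of last occurrences inside each block.

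So the discrepancies you must remove are inversions of pairs $x,y$ in these \emph{blockwise} orders. The global orders do not detect them at all, for example in $xytxyt'xy\approx xytyxt'xy$. To perform the swaps one at a time you also need a normal form: replacing each block $\mathbf u_i$ by $\ini(\mathbf u_i)\,\mathrm{fin}(\mathbf u_i)$ does not change $\mathbf u$ modulo $\mathbf E\vee\overleftarrow{\mathbf E}$. This lets two letters adjacent in $\mathrm{fin}(\mathbf u_i)$ be placed in a factor $x^2y^2$, after which $\tau_m$ is applied with the $t_j$ replaced by arbitrary words.

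\textbf{The extraction step.} It fails in the form you propose, namely deletions plus exponent normalisation starting from the rightmost discrepancy. Consider
\[
\rho:\enskip x^2y^2\,t\,xy\approx xy^2x\,t\,yxy .
\]
This identity holds in $\mathbf Q\vee\mathbf B_2\vee\overleftarrow{\mathbf B_2}$. It has an inversion of last occurrences in the first block and an inversion of first occurrences in the second. Every deletion of letters turns $\rho$ into an identity of $\mathbf E\vee\overleftarrow{\mathbf E}$. Yet $\rho$ is equivalent modulo $\mathbf E\vee\overleftarrow{\mathbf E}$ to $\{\tau_\infty,\overleftarrow{\tau_\infty}\}$. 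The substitution $t\mapsto tx$ removes the second inversion and yields $\tau_\infty$, and $t\mapsto yt$ removes the first and yields $\overleftarrow{\tau_\infty}$. So substitutions that insert letters next to simple letters are indispensable.

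The selection rule is also wrong. In $x^2y^2\,t_1\,xz^2w^2\,t_2\,zw\approx xy^2x\,t_1\,xzw^2z\,t_2\,zw$, the rightmost inversion (of $z,w$) gives only $\tau_\infty$. Deleting $z,w,t_2$ instead gives the strictly stronger $\tau_1$. You must choose an inversion of minimal type. The type of an inversion of $x,y$ in a block is the length of the alternation of $x$ and $y$ over the later blocks. It is $\infty$ as soon as some later block contains both letters, even if a chain of one-letter blocks precedes it, because such a block can be pumped ($x^2y^2\approx(xy)^k$ in $\mathbf E\vee\overleftarrow{\mathbf E}$). For example, $x^2y^2t_1xt_2yt_3yx\approx xy^2xt_1xt_2yt_3yx$ is equivalent to $\tau_\infty$, not to $\tau_2$ or $\tau_3$.
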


For each $m\ge1$, we introduce two more countably infinite series of identities:
\[
\begin{aligned}
&\chi_m:\enskip  (xy)^2\biggl(\prod_{i=1}^m t_i\mathbf a_i\biggr)  \approx (xy)^2x\biggl(\prod_{i=1}^m  t_i\mathbf a_i\biggr),
&\overleftarrow{\chi_m}:\enskip  \biggl(\prod_{i=m}^1 \mathbf a_it_i\biggr) (xy)^2 \approx  \biggl(\prod_{i=m}^1 \mathbf a_it_i\biggr) y(xy)^2.
\end{aligned}
\]

\begin{lemma}
\label{L: does not contain Rn}
Let $\mathbf V$ be a variety such that $M(xy)\in\mathbf V$ satisfying the identities $xyx\approx x^2yx\approx xyx^2$. 
If $\mathbf R_{m+1}\nsubseteq\mathbf V$ for some $m\ge 2$, then $\mathbf V$ satisfies $\overleftarrow{\chi_m}$.
\end{lemma}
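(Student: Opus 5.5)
The plan is to argue by contraposition: assuming $\mathbf V$ violates $\overleftarrow{\chi_m}$, we show $\mathbf R_{m+1}\subseteq\mathbf V$. The route goes through the monoid generating $\mathbf R_{m+1}$ and Lemma~\ref{L: isoterm}. Since $\mathbf R_{m+1}=\mathbf E\{\overleftarrow{\sigma_{m+1}}\}$ sits in the chain of Fig.~\ref{pic: L(E1)} just above $\mathbf R_m$, it should be of the form $\mathbf Q\vee\mathbf M(\mathbf w)$ or simply $\mathbf M(\mathbf w)$ for a suitable word. The natural candidate is
\[
\mathbf w:=\biggl(\prod_{i=m}^1 \mathbf a_it_i\biggr)xy,
\]
or a close variant, in which the ``last'' adjacent occurrences $xy$ are fixed by the alternating chain $\mathbf a_mt_m\cdots\mathbf a_1t_1$. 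So the first step is to confirm, using the structure of $\mathfrak L(\mathbf E)$ and the fact that $\overleftarrow{\sigma_{m+1}}$ fails in $\mathbf R_{m+1}$ but $\overleftarrow{\sigma_m}$ defines $\mathbf R_m$, that $\mathbf R_{m+1}$ is generated by $M(xy)$ together with $M(\mathbf w)$ for such a $\mathbf w$. If necessary we also include $Q$, which lies in every $\mathbf R_k$ by the figure.

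The main step is to show that if $\overleftarrow{\chi_m}$ fails in $\mathbf V$, then $\mathbf w$ is an isoterm for $\mathbf V$. Suppose $\mathbf V\models \mathbf w\approx\mathbf w'$ with $\mathbf w'\ne\mathbf w$. Because $M(xy)\in\mathbf V$, the words $xy$ and $x^py^q$-type patterns constrain $\mathbf w'$: simple letters $t_i$ keep their order, and each block between consecutive $t_i$ keeps its content. The only freedom left is the order and multiplicities of $x,y$ in the blocks. Using $xyx\approx x^2yx\approx xyx^2$, the multiplicities can be normalized, so a nontrivial $\mathbf w'$ essentially swaps or duplicates letters in the final block. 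Multiplying on the right by $x,y$ and using $xyx\approx xyx^2$ then turns $\mathbf w\approx\mathbf w'$ into $\overleftarrow{\chi_m}$, i.e.\ $\bigl(\prod\mathbf a_it_i\bigr)(xy)^2\approx\bigl(\prod\mathbf a_it_i\bigr)y(xy)^2$. The index shifts by one because the first chain letter $\mathbf a_{m+1}$ is absorbed. This contradicts the assumption, so $\mathbf w$ is an isoterm, $M(\mathbf w)\in\mathbf V$, and $\mathbf R_{m+1}\subseteq\mathbf V$.

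The expected obstacle is the first step: identifying the exact generator of $\mathbf R_{m+1}$, and checking that every non-trivial identity $\mathbf w\approx\mathbf w'$ compatible with $M(xy)$ and the assumed identities yields $\overleftarrow{\chi_m}$. The case analysis on where $\mathbf w'$ differs is the place to be careful. Differences inside earlier blocks must be shown either impossible or reducible to the last-block case by deleting letters $t_i$ via substitution $t_i\mapsto1$. If the generator description is unavailable, the fallback is a direct syntactic argument. We would show that $\mathbf V\models\overleftarrow{\chi_m}$ fails implies every identity of $\mathbf V$ holds in $\mathbf R_{m+1}$, using the word problem for $\mathbf E$ (Lemma~\ref{L: id Q1} style invariants plus the ordering of last occurrences).
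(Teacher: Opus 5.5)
\textbf{There is a genuine gap: Lemma~\ref{L: isoterm} cannot be used in this setting.} In any variety satisfying $xyx\approx x^2yx$, no word with a repeated letter is an isoterm. Indeed, for $\mathbf w=\mathbf w_1x\mathbf w_2x\mathbf w_3$ the substitution $y\mapsto\mathbf w_2$ gives the non-trivial identity $\mathbf w\approx\mathbf w_1x^2\mathbf w_2x\mathbf w_3$.

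Hence your $\mathbf w=\bigl(\prod_{i=m}^1\mathbf a_it_i\bigr)xy$, or any variant with a repeated letter, is not an isoterm for $\mathbf R_{m+1}$. So $\mathbf M(\mathbf w)\nsubseteq\mathbf R_{m+1}$, and $\mathbf R_{m+1}$ is neither $\mathbf M(\mathbf w)$ nor $\mathbf Q\vee\mathbf M(\mathbf w)$. Likewise, your main claim that failure of $\overleftarrow{\chi_m}$ makes $\mathbf w$ an isoterm for $\mathbf V$ is false. The squaring identity above always holds in $\mathbf V$. It also holds in $\mathbf R_{m+1}$, where $\overleftarrow{\chi_m}$ fails, so nothing derived from it yields $\overleftarrow{\chi_m}$. (Also, $\overleftarrow{\sigma_{m+1}}$ holds in $\mathbf R_{m+1}$ by definition; it is $\overleftarrow{\sigma_m}$ that fails there.)

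The missing idea is to replace a single isoterm by stability of a set of words of the shape $\mathbf a_m^{p_m}t_m\cdots\mathbf a_1^{p_1}t_1\mathbf p$ with $p_i\ge1$ and $\ini(\mathbf p)=xy$. The paper combines a result of Sapir (Example~4.2 of the cited 2023 article) with Lemma~\ref{L: id Q1}. From $\mathbf R_{m+1}\nsubseteq\mathbf V$ this gives an identity $\mathbf a_m^{p_m}t_m\cdots\mathbf a_1^{p_1}t_1\mathbf p\approx\mathbf a_m^{q_m}t_m\cdots\mathbf a_1^{q_1}t_1\mathbf q$ of $\mathbf V$ with $\ini(\mathbf p)=xy$ and $\ini(\mathbf q)=yx$. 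Your final step (right-multiplying by $(xy)^2$ and reducing modulo $xyx\approx x^2yx\approx xyx^2$) does give $\overleftarrow{\chi_m}$, but only when applied to an identity of this shape.

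\textbf{There is a second gap.} You attribute preservation of block contents to $M(xy)\in\mathbf V$. But $M(xy)$ only forces equal alphabets and the same simple letters in the same order. Preserving the contents of the blocks between consecutive $t_i$ is condition~(c) of Lemma~\ref{L: id Q1}, and it needs $Q\in\mathbf V$. You never establish this; the fact that $Q\in\mathbf R_{m+1}$ does not help.

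The paper therefore splits into cases. If $Q\notin\mathbf V$, Lemma~\ref{L: Q nsubseteq V} gives $x^2yzx\approx x^2yxzx$, and $\overleftarrow{\chi_m}$ follows directly:
\begin{itemize}
\item square the letters of the prefix $\mathbf a_mt_m\cdots\mathbf a_1t_1$ using $xyx\approx x^2yx$;
\item use the factor $y^2t_2x^2t_1$ (present because $m\ge2$) and $x^2yzx\approx x^2yxzx$ to insert $y$ right after $t_1$;
\item unsquare.
\end{itemize}
If $Q\in\mathbf V$, the stable-set argument above applies.

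Your fallback, checking via the word problem for $\mathbf E$ that every identity of $\mathbf V$ holds in $\mathbf R_{m+1}$, only restates the goal. It gives no way to turn an identity that fails in $\mathbf R_{m+1}$ into $\overleftarrow{\chi_m}$.
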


\begin{proof}
First, suppose that $Q\notin\mathbf V$.
Then $\mathbf V$ satisfies $x^2yzx\approx x^2yxzx$ by Lemma~\ref{L: Q nsubseteq V}.
Since
\[
\mathbf c_1 (xy)^2 \stackrel{xyx\approx x^2yx}\approx \mathbf c_2 (xy)^2\stackrel{x^2yzx\approx x^2yxzx}\approx \mathbf c_2 y(xy)^2\stackrel{xyx\approx x^2yx}\approx \mathbf c_1 y(xy)^2,
\]
where $\mathbf c_1:=\mathbf a_mt_m\cdots \mathbf a_1t_1$ and $\mathbf c_2:=\mathbf a_m^2t_m\cdots \mathbf a_1^2t_1$, we see that $\mathbf V$ satisfies $\overleftarrow{\chi_m}$.
Now suppose that $Q\in\mathbf V$.
It follows from~\cite[Example~4.2]{Sapir-23} and Lemma~\ref{L: id Q1} that $\mathbf V$ satisfies an identity $\mathbf a_m^{p_m}t_m\cdots \mathbf a_1^{p_1}t_1\mathbf p\approx\mathbf a_m^{q_m}t_m\cdots \mathbf a_1^{q_1}t_1\mathbf q$ with $\ini(\mathbf p)=xy$, $\ini(\mathbf q)=yx$ and $p_1,q_1,\dots,p_m,q_m\ge 1$.
Then the identity $\mathbf a_m^{p_m}t_m\cdots \mathbf a_1^{p_1}t_1\mathbf p(xy)^2\approx\mathbf a_m^{q_m}t_m\cdots \mathbf a_1^{q_1}t_1\mathbf q(xy)^2$ is equivalent modulo $xyx\approx x^2yx\approx xyx^2$ to $\overleftarrow{\chi_m}$ as required.
\end{proof}

\begin{proposition}
\label{P: xyx=xxyx=xyxx,xxyy=xyxy}
The varieties $\mathbf B$, $\mathbf R$ and  $\overleftarrow{\mathbf R}$ are the only almost Cross varieties of monoids satisfying the identities $xyx\approx x^2yx\approx xyx^2$ and $x^2y^2\approx (xy)^2$.
\end{proposition}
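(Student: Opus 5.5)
First, $\mathbf B$ is almost Cross by Proposition~\ref{P: B}, and it satisfies $xyx\approx x^2yx\approx xyx^2$ and $x^2y^2\approx(xy)^2$ since all four words reduce to idempotent words with equal invariants under Lemma~\ref{L: x=xx}. Next, $\mathbf R$ is almost Cross by Proposition~\ref{P: E1}(ii), and $\overleftarrow{\mathbf R}$ is almost Cross by duality. Both lie in $\mathbf E$ or $\overleftarrow{\mathbf E}$, so they satisfy the identities in question. It therefore remains to show that a variety $\mathbf V$ satisfying $xyx\approx x^2yx\approx xyx^2$ and $x^2y^2\approx(xy)^2$ and containing none of $\mathbf B$, $\mathbf R$, $\overleftarrow{\mathbf R}$ is Cross. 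If $xy$ is not an isoterm for $\mathbf V$, then Lemma~\ref{L: xy isoterm} makes $\mathbf V$ commutative or idempotent. In the idempotent case $\mathbf V$ is a proper subvariety of $\mathbf B$, and in either case Corollary~\ref{C: com id Cross} shows that $\mathbf V$ is Cross. So we may assume $M(xy)\in\mathbf V$.

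The key step is to place $\mathbf V$ inside $\mathbf E\vee\overleftarrow{\mathbf E}$. By Lemma~\ref{L: nsub B2} and its dual, the variety $\mathbf V\wedge\mathbf E\vee\overleftarrow{\mathbf E}$ should equal $\mathbf V$, and I would argue this as follows. A variety satisfying our identities that omits both $\overleftarrow{\mathbf B_2}$ and $\mathbf B_2$ lies in $\mathbf E\wedge\overleftarrow{\mathbf E}$. In general I would show that $\mathbf E\vee\overleftarrow{\mathbf E}$ is exactly the variety defined by $xyx\approx x^2yx\approx xyx^2$, $x^2y^2\approx(xy)^2$ together with an identity excluding $\mathbf B$, for instance $x^2y^2\approx y^2x^2$-free consequences such as $xy^2x\approx xyx\cdot$ ... . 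Concretely, I expect that $\mathbf B\nsubseteq\mathbf V$ forces $\mathbf V$ to satisfy $\mathbf B_3$-type failure, namely some identity $xyzxzyz\ldots$. I would use the known description of the lattice of band varieties: since $\mathbf V\wedge\mathbf B$ is a proper band variety containing at most $\mathbf B_2\vee\overleftarrow{\mathbf B_2}$ among small bands, and our identities already force $xyx\approx xyx^2$, the task is to show that $\mathbf V$ satisfies $xyzxy\approx xyxzxy\approx xyzyxy$-like identities or $xy^2x\approx x^2y^2$ modulo a finite tail. This is the main obstacle. I would aim to prove that, modulo the given identities, every identity failing in $\mathbf B$ but holding in $\mathbf B_2\vee\overleftarrow{\mathbf B_2}$ implies the defining identities of $\mathbf E\vee\overleftarrow{\mathbf E}$. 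Failing that, I would fall back on the structure of the proof in~\cite{Gusev-Sapir-26}, where this reduction is presumably carried out.

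Once $\mathbf V\subseteq\mathbf E\vee\overleftarrow{\mathbf E}$, I split into cases by which of $\mathbf Q$, $\mathbf B_2$, $\overleftarrow{\mathbf B_2}$ lie in $\mathbf V$. If $\overleftarrow{\mathbf B_2}\nsubseteq\mathbf V$, then $\mathbf V\subseteq\mathbf E$ by Lemma~\ref{L: nsub B2}. Since $\mathbf R\nsubseteq\mathbf V$, Fig.~\ref{pic: L(E1)} shows that $\mathbf V\subseteq\mathbf R_m$ for some $m$, so $\mathbf V$ has finitely many subvarieties. It is finitely based because each variety in that lattice is defined by finitely many identities, and finitely generated by Lemma~\ref{L: xyx=xyxx is finitely generated}. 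The case $\mathbf B_2\nsubseteq\mathbf V$ is dual. If $\mathbf B_2\vee\overleftarrow{\mathbf B_2}\subseteq\mathbf V$ but $Q\notin\mathbf V$, then Lemma~\ref{L: Q nsubseteq V} gives $x^2yzx\approx x^2yxzx$, which together with $xy^2x\approx x^2y^2$-type consequences should put $\mathbf V$ into $\mathbf O$ via Lemma~\ref{L: implies O}, so that Lemma~\ref{L: O} applies.

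Finally, if $\mathbf Q\vee\mathbf B_2\vee\overleftarrow{\mathbf B_2}\subseteq\mathbf V$, then Lemma~\ref{L: E23} describes $\mathbf V$ by identities $\tau_m$, $\tau_\infty$ and their duals. Since $\mathbf R_{m+1}\nsubseteq\mathbf V$ for some $m$ (as $\mathbf R\nsubseteq\mathbf V$ and $\mathbf R$ is the union of the chain $\mathbf R_m$), Lemma~\ref{L: does not contain Rn} yields $\overleftarrow{\chi_m}$, and dually $\chi_{m'}$. From these I would derive $\tau_{m''}$ and $\overleftarrow{\tau_{m''}}$ for some $m''$. Then only finitely many of the identities in Lemma~\ref{L: E23} give distinct subvarieties of $\mathbf V$, so the interval $[\mathbf Q\vee\mathbf B_2\vee\overleftarrow{\mathbf B_2},\mathbf V]$ is finite and each of its members is finitely based. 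Combined with the earlier cases this shows $\mathbf V$ is small and finitely based, and Lemma~\ref{L: xyx=xyxx is finitely generated} finishes the proof.
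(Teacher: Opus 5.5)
\textbf{Verdict: there is a genuine gap.} You leave the key step unproved, and the route you sketch for it cannot work.

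Your outer structure matches the paper's: the commutative and idempotent cases via Corollary~\ref{C: com id Cross}; the cases $\mathbf B_2\nsubseteq\mathbf V$, $\overleftarrow{\mathbf B_2}\nsubseteq\mathbf V$ and $Q\notin\mathbf V$; and finally $\chi_m,\overleftarrow{\chi_m}$, hence $\tau_m,\overleftarrow{\tau_m}$, plus Lemma~\ref{L: E23}. The step you flag as the main obstacle is $\mathbf V\subseteq\mathbf E\vee\overleftarrow{\mathbf E}$. This is exactly what Lemma~\ref{L: E23} needs in the last case, and you do not prove it.

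\textbf{Why your route fails.} You try to derive the inclusion from $\mathbf B\nsubseteq\mathbf V$, i.e.\ from an identity that fails in $\mathbf B$ but holds in $\mathbf B_2\vee\overleftarrow{\mathbf B_2}$. No such argument can exist. Consider the band variety $\overleftarrow{\mathbf B_3}$:
\begin{itemize}
\item It satisfies all the hypotheses, contains $\mathbf B_2\vee\overleftarrow{\mathbf B_2}$, and excludes $\mathbf B$.
\item It is not regular. If it were, then $\mathbf M(x)\vee\overleftarrow{\mathbf B_3}$ would satisfy $xtxyx\approx xtyx$. By \cite[Fact~8.5]{Gusev-Sapir-26} this variety contains $\mathbf E\supseteq\mathbf Q$, but $\mathbf Q$ violates $xtxyx\approx xtyx$ by Lemma~\ref{L: id Q1}.
\item Every band in $\mathbf E\vee\overleftarrow{\mathbf E}$ satisfies $xyzx\approx xyxzx$, the idempotent image of $xy^2z^2x\approx xy^2xz^2x$. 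So $\overleftarrow{\mathbf B_3}\nsubseteq\mathbf E\vee\overleftarrow{\mathbf E}$.
\end{itemize}
Now take $\mathbf M(xy)\vee\overleftarrow{\mathbf B_3}$. It is non-commutative, non-idempotent and satisfies the hypotheses. It excludes $\mathbf B$, because it satisfies the defining identity of $\overleftarrow{\mathbf B_3}$ with every letter squared. Yet it is not contained in $\mathbf E\vee\overleftarrow{\mathbf E}$, and it even contains $\mathbf E\supseteq\mathbf R$. So excluding $\mathbf B$ is the wrong hypothesis for bounding the band part of $\mathbf V$.

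\textbf{The missing idea.} Use the exclusion of $\mathbf R$ and $\overleftarrow{\mathbf R}$ instead:
\begin{enumerate}
\item Since $M(x)\in\mathbf V$ and $\mathbf E\subseteq\mathbf M(x)\vee\overleftarrow{\mathbf B_3}$, the assumption $\mathbf R\nsubseteq\mathbf V$ forces $\overleftarrow{\mathbf B_3}\nsubseteq\mathbf V$.
\item Dually, $\mathbf B_3\nsubseteq\mathbf V$.
\item Then \cite[Corollary~5.6]{Gusev-Sapir-26} gives $\mathbf V\subseteq\mathbf E\vee\overleftarrow{\mathbf E}$.
\end{enumerate}
With this in place, your final case goes through as in the paper. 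The passage from $\overleftarrow{\chi_m}$ to $\overleftarrow{\tau_m}$ is just $y(xy)^2\approx yx^2y^2\approx yx^2y$.

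\textbf{A smaller correction.} In the case $Q\notin\mathbf V$ you cannot appeal to $xy^2x\approx x^2y^2$, since it fails in $\overleftarrow{\mathbf B_2}\subseteq\mathbf V$. Instead, $x^2yzx\approx x^2yxzx$ and $xyx\approx x^2yx$ give $xty^2x\approx x^2ty^2x\approx x^2txy^2x\approx xtxy^2x$. Then Lemma~\ref{L: implies O} puts $\mathbf V$ inside $\mathbf O$, and Lemma~\ref{L: O} applies.
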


\begin{proof}
The varieties $\mathbf B$, $\mathbf R$ and $\overleftarrow{\mathbf R}$ are almost Cross by Propositions~\ref{P: B} and~\ref{P: E1}(ii), and the result dual of Proposition~\ref{P: E1}(ii).
Let $\mathbf V$ be a variety satisfying the identities $xyx\approx x^2yx\approx xyx^2$ and $x^2y^2\approx (xy)^2$, and not containing the varieties $\mathbf B$, $\mathbf R$ and $\overleftarrow{\mathbf R}$.
If the variety $\mathbf V$ is either commutative or idempotent, then it is Cross by Corollary~\ref{C: com id Cross}.
Therefore, suppose that $\mathbf V$ is neither commutative nor idempotent, whence $M(xy)\in\mathbf V$ by Lemmas~\ref{L: isoterm} and~\ref{L: xy isoterm}.

If $\mathbf B_2\nsubseteq\mathbf V$, then $\mathbf V\subseteq\overleftarrow{\mathbf E}$ by the dual of Lemma~\ref{L: nsub B2}. 
In this case, since $\overleftarrow{\mathbf R}\nsubseteq\mathbf V$, the dual of Proposition~\ref{P: E1} implies $\mathbf V$ is a Cross variety.
Dually, if $\overleftarrow{\mathbf B_2}\nsubseteq\mathbf V$, then $\mathbf V$ is a Cross variety.
If $Q\notin \mathbf V$, then $\mathbf V$ satisfies $x^2yzx\approx x^2yxzx$ by Lemma~\ref{L: Q nsubseteq V}.
In this case, $\mathbf V\subseteq\mathbf O$ because
\[
\begin{aligned}
xzyxty\stackrel{xyx\approx x^2yx}\approx x^2zyxty\stackrel{x^2yzx\approx x^2yxzx}\approx x^2zxyxty\stackrel{xyx\approx x^2yx}\approx xzxyxty,\\
xzytyx\stackrel{xyx\approx x^2yx}\approx x^2zytyx\stackrel{x^2yzx\approx x^2yxzx}\approx x^2zytxyx\stackrel{xyx\approx x^2yx}\approx xzytxyx,
\end{aligned}
\]
whence $\mathbf V$ is a Cross variety by Lemma~\ref{L: O}.
So, we may further assume that $\mathbf Q\vee\mathbf B_2\vee\overleftarrow{\mathbf B_2}\subseteq\mathbf V$.

It is shown in~\cite[Fact~8.5]{Gusev-Sapir-26} that $\mathbf E\subseteq\mathbf M(x)\vee\overleftarrow{\mathbf B_3}$.
Since $\mathbf R\nsubseteq\mathbf V$, this implies that $\overleftarrow{\mathbf B_3}\nsubseteq\mathbf V$.
By a similar argument, we can show that $\mathbf B_3\nsubseteq\mathbf V$.
Then 
\[
\mathbf V\subseteq \mathbf E\vee \overleftarrow{\mathbf E}=\var\{xyx\approx x^2yx\approx xyx^2,\,x^2y^2\approx (xy)^2,\,xy^2z^2x\approx xy^2xz^2x \}
\] 
by~\cite[Corollary~5.6]{Gusev-Sapir-26}.
It follows from Proposition~\ref{P: E1}(i) that $\mathbf R_{m+1},\overleftarrow{\mathbf R_{m+1}}\nsubseteq\mathbf V$ for some $m\ge 2$.
Then $\mathbf V$ satisfies $\{\chi_m,\overleftarrow{\chi_m}\}$ by Lemma~\ref{L: does not contain Rn} and the dual result. 
Since $y(xy)^2\stackrel{x^2y^2\approx (xy)^2}\approx yx^2y^2\stackrel{xyx\approx xyx^2}\approx yx^2y$, we see that $\{\tau_m,\overleftarrow{\tau_m}\}$ is satisfied by $\mathbf V$.
Then $\mathbf V$ satisfies $\{\tau_s,\overleftarrow{\tau_s}\mid m\le s\le \infty\}$ as well.
This fact and Lemma~\ref{L: E23} imply that each variety from the interval $[\mathbf Q\vee\mathbf B_2\vee\overleftarrow{\mathbf B_2},\mathbf V]$ can be defined by the identities in $\{\tau_s,\overleftarrow{\tau_s}\mid 1\le s\le m\}$.
As shown above, any variety satisfying the identities $xyx\approx x^2yx\approx xyx^2$ and $x^2y^2\approx (xy)^2$ and not containing the varieties $\mathbf B$, $\mathbf Q\vee\mathbf B_2\vee\overleftarrow{\mathbf B_2}$, $\mathbf R$ and $\overleftarrow{\mathbf R}$ is a Cross variety and, in particular, small.
Then the variety $\mathbf V$ is small and finitely based.
Now Lemma~\ref{L: xyx=xyxx is finitely generated} applies, yielding that $\mathbf V$ is a Cross variety.
\end{proof}

\subsection{The varieties satisfying the identities $xyx\approx x^2yx$ and $xty^2x\approx xtxy^2x$}
\label{subsec: xyx=xxyx,xtyyx=xtxyyx}

Let
\[
\begin{aligned}
&\mathbf H:=\var\{xyx\approx xyx^2\approx x^2yx,\,(xy)^2\approx (yx)^2,\,xty^2x\approx xtxy^2x\},\\  &\mathbf H_m:=\mathbf H\{\sigma_m\},\ m\ge 2.
\end{aligned}
\]  

\begin{proposition}[\!{\cite[Proposition~3.3]{Gusev-Lee-Zhang-26}}]
\label{P: H}
\noindent
\begin{itemize}
\item[\textup{(i)}] The variety $\mathbf H$ is a non-finitely generated almost Cross variety of monoids. 
\item[\textup{(ii)}] The lattice $\mathfrak L(\mathbf H)$ has the form shown in Fig.~\ref{pic: L(H)}.
\end{itemize}
\end{proposition}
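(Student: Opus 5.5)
We will use the description of subvarieties of $\mathbf O\{(xy)^2\approx(yx)^2\}$ given in Proposition~\ref{P: subvarieties of O cap J}. By Lemma~\ref{L: implies O}, the identities $xyx\approx x^2yx$ and $xty^2x\approx xtxy^2x$ give $\mathbf H\subseteq\mathbf O$, and $\mathbf H$ also satisfies $(xy)^2\approx(yx)^2$. Hence every non-commutative subvariety of $\mathbf H$ is defined within $\mathbf H$ by some $\Sigma_1\subseteq\Phi_1$ together with some $\Sigma_2\subseteq\Phi_2$.

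The first step is to sort the identities of $\Phi_1\cup\Phi_2$ that fail in $\mathbf H$. Modulo the defining identities of $\mathbf H$, such an identity either
\begin{itemize}
\item[(a)] lands us below $\mathbf Q$, or yields $x^2yzx\approx x^2yxzx$, or makes the subvariety commutative or idempotent (these are the identities of $\Phi_1$, and the elements of $\Phi_2$ with $\mathbf c\in\{1,txy\}$ or with $\mathbf h=yh$ and short $\mathbf c$); or
\item[(b)] is equivalent to $\sigma_m$ for the corresponding $m$ (an element $\mathbf h\mathbf p\mathbf c\approx\mathbf h\mathbf q\mathbf c$ with $\mathbf c=\prod_{i=1}^k t_i\mathbf a_i$ or $\prod_{i=2}^{k+1}t_i\mathbf a_i$).
\end{itemize}
To establish (b), we reduce $\mathbf p$ and $\mathbf q$ using $xyx\approx x^2yx\approx xyx^2$ and $(xy)^2\approx(yx)^2$. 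If $\mathbf p$ and $\mathbf q$ begin with the same letter, the identity already holds in $\mathbf H$. Otherwise we multiply on the left by $xy$, or delete letters via $x\mapsto x^2$, and obtain $\sigma_k$ or $\sigma_{k+1}$. Conversely, $\sigma_m$ implies the identity by substitution.

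Consequently, every proper subvariety of $\mathbf H$ satisfies either $x^2yzx\approx x^2yxzx$ or some $\sigma_m$ with $m\ge2$. By Lemma~\ref{L: O cap J2} it is then Cross. This also yields the picture in Fig.~\ref{pic: L(H)}: the part below $\mathbf H_2$ consists of the finitely many varieties produced in case~(a), and above it lies the chain $\mathbf H_2\subseteq\mathbf H_3\subseteq\cdots$ with union $\mathbf H$. Note that $\sigma_m$ implies $\sigma_{m+1}$, since we can delete $t_{m+1}\mathbf a_{m+1}$ by substituting $t_{m+1}\mapsto1$ and using $xyx\approx xyx^2$.

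The main obstacle is proving that this chain is strict, i.e.\ that $\sigma_{m+1}$ does not imply $\sigma_m$ modulo the identities of $\mathbf H$. I would first try an invariant on words that is preserved by every consequence of the defining identities of $\mathbf H$ together with $\sigma_{m+1}$. A natural candidate is the relative order of the first occurrences of $x$ and $y$ in a word, in the situation where the rest of the word contains an alternating tail $t_1\mathbf a_1\cdots t_j\mathbf a_j$ of length at most $m$. If this fails, I would fall back on a syntactic monoid $M(\mathscr W)$-type separating monoid.

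Strictness gives infinitely many subvarieties, so $\mathbf H$ is not Cross; since every proper subvariety is Cross, $\mathbf H$ is almost Cross.

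It remains to show that $\mathbf H$ is not finitely generated. Suppose $\mathbf H=\mathbf{var}\,M$ with $|M|=n$. Then $\mathbf H$ is generated by its free object $F_{\mathbf H}(n)$, which is finite because $\mathbf H$ is locally finite~\cite{Sapir-87}. Each $F_{\mathbf H_m}(n)$ is a quotient of $F_{\mathbf H}(n)$ by a congruence, and these congruences decrease as $m$ grows. Their intersection is trivial, because the equational theory of $\mathbf H=\bigvee_m\mathbf H_m$ is the intersection of the theories of the $\mathbf H_m$. Since $F_{\mathbf H}(n)$ is finite, the decreasing sequence stabilizes at the trivial congruence. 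Hence $F_{\mathbf H}(n)\in\mathbf H_m$ for some $m$, so $\mathbf H=\mathbf H_m$, which contradicts strictness.
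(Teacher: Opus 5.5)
There are genuine gaps. Your overall plan is sensible: describe subvarieties via Proposition~\ref{P: subvarieties of O cap J}, show that every proper subvariety satisfies $x^2yzx\approx x^2yxzx$ or some $\sigma_m$, apply Lemma~\ref{L: O cap J2}, and then use a strict chain of the $\mathbf H_m$. Your closing argument for non-finite generation is also fine once its inputs are available. However, both load-bearing steps are missing, and one of them is stated incorrectly.

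\textbf{The sorting of $\Phi_1\cup\Phi_2$ is false.} You assert that $\mathbf h\mathbf p\mathbf c\approx\mathbf h\mathbf q\mathbf c$ already holds in $\mathbf H$ whenever $\mathbf p,\mathbf q$ begin with the same letter. A counterexample is $\tau_1\colon x^2y^2t_1x\approx xy^2xt_1x$, which lies in $\Phi_2$ with $\mathbf h=1$ and $\mathbf c=t_1\mathbf a_1$.

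\begin{itemize}
\item \emph{It fails in $\mathbf H$.} The monoid $A$ of Subsection~\ref{subsec: A1} lies in $\mathbf H$, since $\mathbf A=\mathbf H\{\sigma_3\}$. Under $x\mapsto b$, $y\mapsto a$, $t_1\mapsto c$, the left side is $bacb=bc$ and the right side is $babcb=0$.
\item \emph{It is not in your class (a).} $\tau_1$ holds in $Q$ by Lemma~\ref{L: id Q1}, and in $A_0$ because it follows from $xyzx\approx xyxzx$. Hence $\mathbf Q\vee\mathbf A_0\subseteq\mathbf H\{\tau_1\}$.
\item \emph{It is not in your class (b) either.} It is not equivalent to $\sigma_1$, because $Q$ violates $\sigma_1$. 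It is not equivalent to $\sigma_2$, because the set $\{x^sy^rt_1x^q\mid s,r,q\ge1\}$ is stable with respect to $\mathbf H_2$. You can check this against single applications of the defining identities of $\mathbf H$ and of $\sigma_2$. So $\mathbf H_2$ violates $\tau_1$.
\end{itemize}

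In the lattice you are trying to establish, $\mathbf H\{\tau_1\}$ is therefore the node $\mathbf Q\vee\mathbf A_0$, which your description does not produce.

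For part (i) you only need a weaker statement: every identity of $\Phi_1\cup\Phi_2$ failing in $\mathbf H$ implies, modulo $\mathbf H$, either $x^2yzx\approx x^2yxzx$ or some $\sigma_m$. For $\tau_1$ this is true. One has $xyt_1xt_2y\approx x^2y^2t_1xt_2y\approx xy^2xt_1xt_2y\approx(xy)^2xt_1xt_2y\approx(yx)^2t_1xt_2y$, and likewise $yxt_1xt_2y\approx y^2x^2t_1xt_2y\approx yx^2yt_1xt_2y\approx(yx)^2t_1xt_2y$. But the weaker statement needs an actual case-by-case verification over all of $\Phi_1\cup\Phi_2$, and the proposal does not contain one.

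\textbf{Strictness of the chain is not proved, and (ii) is not addressed.} You call strictness the main obstacle but only sketch what you would try. Everything else depends on it:
\begin{itemize}
\item that $\mathbf H$ has infinitely many subvarieties, hence is not Cross;
\item the infinite part of Fig.~\ref{pic: L(H)};
\item your non-finite-generation argument.
\end{itemize}
That argument also needs $\mathbf H=\bigvee_m\mathbf H_m$. This follows from (i) plus strictness, because a proper subvariety containing $\mathbf Q$ cannot satisfy $x^2yzx\approx x^2yxzx$, but you should say so.

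The paper's tool for strictness appears in the proof of Lemma~\ref{L: does not contain H}. The set $\{x^sy^rt_1\mathbf a_1^{r_1}\cdots t_m\mathbf a_m^{r_m}\}$ is stable with respect to $\mathbf H_{m+1}$ but not with respect to $\mathbf H_m$, because of $\sigma_m$. Your proposed first-occurrence invariant is in this spirit, but it must be formulated precisely and checked against each defining identity.

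For part (ii), you never identify the varieties below $\mathbf H_2$ in Fig.~\ref{pic: L(H)} or their inclusions. You also do not show that $\mathbf H_2$ covers $\mathbf Q\vee\mathbf A_0$. Nor do you treat subvarieties defined by several identities of $\Phi_1\cup\Phi_2$ at once.

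A minor point: $\sigma_m$ implies $\sigma_{m+1}$ simply by multiplying on the right by $t_{m+1}\mathbf a_{m+1}$. Your justification, substituting $t_{m+1}\mapsto1$ in $\sigma_{m+1}$ and deleting the leftover $\mathbf a_{m+1}$, runs in the wrong direction. That deletion is not available: if it were, the chain would collapse.
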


\begin{figure}[htb]
\unitlength=1mm
\linethickness{0.4pt}
\begin{center}
\begin{picture}(70,105)
\put(35,5){\circle*{1.33}}
\put(35,15){\circle*{1.33}}
\put(35,25){\circle*{1.33}}
\put(35,35){\circle*{1.33}}
\put(45,45){\circle*{1.33}}
\put(25,45){\circle*{1.33}}
\put(35,55){\circle*{1.33}}
\put(25,65){\circle*{1.33}}
\put(45,65){\circle*{1.33}}
\put(35,75){\circle*{1.33}}
\put(35,85){\circle*{1.33}}
\put(35,95){\circle*{1.33}}
\put(35,105){\circle*{1.33}}
\put(35,102){\makebox(0,0)[cc]{$\vdots$}}

\put(35,5){\line(0,1){30}}
\put(35,35){\line(-1,1){10}}
\put(35,35){\line(1,1){10}}
\put(25,45){\line(1,1){10}}
\put(25,45){\line(1,1){20}}
\put(25,65){\line(1,1){10}}
\put(45,45){\line(-1,1){20}}
\put(45,65){\line(-1,1){10}}
\put(35,75){\line(0,1){23}}

\put(35,2){\makebox(0,0)[cc]{$\mathbf M(\emptyset)$}}
\put(37,15){\makebox(0,0)[lc]{$\mathbf M(1)$}}
\put(37,25){\makebox(0,0)[lc]{$\mathbf M(x)$}}
\put(37,35){\makebox(0,0)[lc]{$\mathbf M(xy)$}}
\put(23,45){\makebox(0,0)[rc]{$\mathbf D$}}
\put(47,45){\makebox(0,0)[lc]{$\overleftarrow{\mathbf D}$}}
\put(23,65){\makebox(0,0)[rc]{$\mathbf Q$}}
\put(47,65){\makebox(0,0)[lc]{$\mathbf A_0$}}
\put(37,85){\makebox(0,0)[lc]{$\mathbf H_2$}}
\put(37,95){\makebox(0,0)[lc]{$\mathbf H_3$}}
\put(37,105){\makebox(0,0)[lc]{$\mathbf H$}}
\end{picture}
\end{center}
\caption{The lattice $\mathfrak L(\mathbf H)$}
\label{pic: L(H)}
\end{figure}

\begin{lemma}
\label{L: does not contain H}
Let $\mathbf V$ be a variety such that $M(xy)\in\mathbf V$ satisfying the identities $xyx\approx x^2yx$ and $xty^2x\approx xtxy^2x$. 
If $\mathbf H_{m+1}\nsubseteq\mathbf V$ for some $m\ge2$, then $\mathbf V$ satisfies 
\[
\xi_m:\enskip  x^2y^2 \biggl(\prod_{i=1}^m t_i\mathbf a_i\biggr) \approx (xy)^2 \biggl(\prod_{i=1}^m t_i\mathbf a_i\biggr).
\]
\end{lemma}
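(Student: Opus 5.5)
The plan mirrors the proof of Lemma~\ref{L: does not contain Rn}: split according to whether $Q\in\mathbf V$. In each case the goal is to produce an identity that separates $x^2y^2$ from $(xy)^2$ in front of the tail $\mathbf c:=\prod_{i=1}^m t_i\mathbf a_i$, and then to normalize it using $xyx\approx x^2yx$ and $xty^2x\approx xtxy^2x$.

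\emph{Case $Q\notin\mathbf V$.} Since $M(xy)\in\mathbf V$, Lemma~\ref{L: Q nsubseteq V} gives $x^2yzx\approx x^2yxzx$. Here I would need $xyx\approx xyx^2$, so I would derive it first. Substituting $t\mapsto 1$ in $xty^2x\approx xtxy^2x$ gives $xy^2x\approx x^2y^2x$; combined with $xyx\approx x^2yx$ this should yield the needed form of $xyx\approx xyx^2$ at least inside contexts where $x$ occurs later.

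In the word $x^2y^2\mathbf c$, the letter $x$ occurs after $y^2$ inside $\mathbf a_1=x$. Multiplying out $x^2 y^2 t_1 x\cdots$ and applying $x^2yzx\approx x^2yxzx$ with $y\mapsto y^2$, $z\mapsto t_1$ gives $x^2y^2xt_1x\cdots$. Then $xyx\approx x^2yx$ lets me rewrite the prefix $x^2y^2x$ as $xy^2x$. Finally $xty^2x\approx xtxy^2x$ and $y$-absorption (using the later occurrence of $y$ in $\mathbf a_2=y$, which is available because $m\ge2$) should let me reach $(xy)^2\mathbf c$. This is routine bookkeeping.

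\emph{Case $Q\in\mathbf V$.} This is the main step. I would use the dual of~\cite[Example~4.2]{Sapir-23} to understand $\mathbf H_{m+1}$ and its identities. Together with Lemma~\ref{L: id Q1}, the hypothesis $\mathbf H_{m+1}\nsubseteq\mathbf V$ should give an identity
\[
\mathbf p\, t_1\mathbf a_1^{p_1}\cdots t_{m}\mathbf a_{m}^{p_{m}}\approx \mathbf q\, t_1\mathbf a_1^{q_1}\cdots t_{m}\mathbf a_{m}^{q_{m}}
\]
with $\ini(\mathbf p)=xy$, $\ini(\mathbf q)=yx$ and all exponents positive. Equivalently, $\mathbf V$ satisfies some nontrivial consequence of $\sigma_{m+1}$ that fails in $\mathbf H_{m+1}$ only through the order of first occurrences. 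Since $\mathbf H_{m+1}=\mathbf H\{\sigma_{m+1}\}$, the index shift from $m+1$ to $m$ should be exactly what absorbs the extra block needed.

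Next I would multiply on the left by $x^2y^2$ (or $(xy)^2$) and use $xyx\approx x^2yx$ together with $xty^2x\approx xtxy^2x$ to collapse the prefix. The intended effect is that $x^2y^2\mathbf p\mathbf c'$ reduces to $x^2y^2\mathbf c$, while $x^2y^2\mathbf q\mathbf c'$ reduces to $(xy)^2\mathbf c$; alternatively, the roles of the two sides are reversed by symmetry.

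The hardest point is identifying the correct identity supplied by the exclusion of $\mathbf H_{m+1}$. If the Sapir-type reduction is not directly available, I would instead argue via isoterms. By Lemma~\ref{L: isoterm}, I would show that $\mathbf H_{m+1}$ is generated by $M(\mathbf w)$ modulo $\mathbf Q\vee\mathbf A_0$ for a suitable word $\mathbf w$ such as $xy\,t_1\mathbf a_1\cdots t_m\mathbf a_m$ with a doubled prefix. Then $\mathbf H_{m+1}\nsubseteq\mathbf V$ would mean $\mathbf w$ is not an isoterm, giving a nontrivial identity $\mathbf w\approx\mathbf w'$. A case analysis using Lemma~\ref{L: id Q1} and the fact that $M(xy)\in\mathbf V$ should force $\mathbf w'$ to differ from $\mathbf w$ only by swapping the first occurrences of $x$ and $y$. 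That swap is what yields $\xi_m$.
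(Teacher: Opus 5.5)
Your case $Q\notin\mathbf V$ is essentially the paper's argument. One small fix: $xyx\approx xyx^2$ comes from substituting $y\mapsto 1$ in $xty^2x\approx xtxy^2x$, which gives $xtx\approx xtx^2$ outright. Your substitution $t\mapsto 1$ only yields $xy^2x\approx x^2y^2x$, which already follows from $xyx\approx x^2yx$.

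The case $Q\in\mathbf V$ has a genuine gap, for three reasons.
\begin{itemize}
\item The dual of \cite[Example~4.2]{Sapir-23} is about $\overleftarrow{\mathbf R_{m+1}}=\overleftarrow{\mathbf E}\{\sigma_{m+1}\}$, not about $\mathbf H_{m+1}=\mathbf H\{\sigma_{m+1}\}$.
\item The order of first occurrences of $x,y$ is the wrong invariant for $\xi_m$. Put $\mathbf c:=\prod_{i=1}^m t_i\mathbf a_i$. The identity $(xy)^2\mathbf c\approx(yx)^2\mathbf c$ has exactly your shape, with $\ini(\mathbf p)=xy$ and $\ini(\mathbf q)=yx$. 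It holds in $\mathbf H$, which contains $M(xy)$ and $Q$ and satisfies both of your hypotheses. Yet $\xi_m$ fails in $\mathbf H_{m+1}\subseteq\mathbf H$: $x^2y^2\mathbf c$ lies in the stable set $\mathcal A$ below, while $(xy)^2\mathbf c$ does not. So no manipulation of an identity of your shape can yield $\xi_m$.
\item The isoterm fallback cannot work. $\mathbf H_{m+1}$ satisfies $x^2\approx x^3$ and $xyx\approx x^2yx$, so no word with a repeated letter is an isoterm for it. For linear $\mathbf w$ one has $\mathbf M(\mathbf w)\subseteq\mathbf M(xy)$. Hence $\mathbf H_{m+1}$ is not of the form $\mathbf M(\mathbf w)\vee\mathbf Q\vee\mathbf A_0$, and Lemma~\ref{L: isoterm} cannot detect it.
\end{itemize}

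What separates $x^2y^2\mathbf c$ from $(xy)^2\mathbf c$ is whether the $\{x,y\}$-prefix before $t_1$ contains the factor $yx$. Since no single word witnesses this, you need a set-level substitute for isoterms; the paper uses Sapir's stable sets.

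Let $\mathcal A=\{x^sy^tt_1\mathbf a_1^{r_1}\cdots t_m\mathbf a_m^{r_m}\mid s,t,r_1,\dots,r_m\ge1\}$. This set is stable with respect to $\mathbf H_{m+1}$ but not with respect to $\mathbf H_m$. So by \cite[Corollary~3.6]{Sapir-21}, $\mathbf V$ satisfies some $\mathbf u\approx\mathbf v$ with $\mathbf u\in\mathcal A$ and $\mathbf v\notin\mathcal A$.

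Lemma~\ref{L: id Q1} then forces $\mathbf v=\mathbf q\,t_1\mathbf a_1^{\ell_1}\cdots t_m\mathbf a_m^{\ell_m}$, where $\alf(\mathbf q)=\{x,y\}$ and $yx$ is a factor of $\mathbf q$. Now multiply the prefixes by $x$ on the left and $y$ on the right, rather than by $x^2y^2$. This gives $x^{p+1}y^{q+1}t_1\mathbf a_1^{p_1}\cdots t_m\mathbf a_m^{p_m}\approx x\mathbf q y\,t_1\mathbf a_1^{\ell_1}\cdots t_m\mathbf a_m^{\ell_m}$. Modulo $xyx\approx x^2yx\approx xyx^2$, this identity is equivalent to $\xi_m$.
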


\begin{proof}
First, suppose that $Q\notin\mathbf V$.
Then $\mathbf V$ satisfies $x^2yzx\approx x^2yxzx$ by Lemma~\ref{L: Q nsubseteq V}.
Since
\[
x^2y^2\biggl(\prod_{i=1}^m t_i\mathbf a_i\biggr)\stackrel{x^2yzx\approx x^2yxzx}\approx x^2y^2xy\biggl(\prod_{i=1}^m t_i\mathbf a_i\biggr)\stackrel{xyx\approx x^2yx}\approx (xy)^2\biggl(\prod_{i=1}^m t_i\mathbf a_i\biggr),
\]
we see that $\mathbf V$ satisfies $\xi_m$.
Now suppose that $Q\in\mathbf V$.
Following~\cite{Sapir-21}, a set $\mathcal T$ of words is said to be \textit{stable with respect to a variety} $\mathbf X$ if $\mathbf v \in \mathcal T$ whenever $\mathbf u\in \mathcal T$ and $\mathbf X$ satisfies $\mathbf u \approx \mathbf v$.
It is routine to check that the set $\mathcal A:=\{x^sy^tt_1\mathbf a_1^{r_1}\cdots t_m\mathbf a_m^{r_m}\mid s,t,r_1\dots,r_m\ge 1\}$ is stable with respect to the variety $\mathbf H_{m+1}$.
However, the set $\mathcal A$ is not stable with with respect to the variety $\mathbf H_m$ because $\mathbf H_m$ satisfies $\sigma_m$.
Then it follows from~\cite[Corollary~3.6]{Sapir-21} that $\mathbf V$ satisfies an identity $\mathbf u\approx\mathbf v$ with $\mathbf u\in \mathcal A$ and $\mathbf v\notin \mathcal A$.
In particular, $\mathbf u=x^py^qt_1\mathbf a_1^{p_1}\cdots t_m\mathbf a_m^{p_m}$ for some $p,q,p_1,\dots,p_m\ge1$.
According to Lemma~\ref{L: id Q1}, $\mathbf v=\mathbf qt_1\mathbf a_1^{\ell_1}\cdots t_m\mathbf a_m^{\ell_m}$, where $\ell_1,\dots,\ell_m\ge1$ and the word $\mathbf q$ depends on the letters $x,y$ and contains $yx$ as a subword.
Then the identity $x^{p+1}y^{q+1}t_1\mathbf a_1^{p_1}\cdots t_m\mathbf a_m^{p_m}\approx x\mathbf qyt_1\mathbf a_1^{\ell_1}\cdots t_m\mathbf a_m^{\ell_m}$ is equivalent modulo $xyx\approx x^2yx\approx xyx^2$ to the identity $\xi_m$ as required.
\end{proof} 

\begin{proposition}
\label{P: xyx=xxyx,xtyyx=xtxyyx}
The varieties $\mathbf H$ and $\overleftarrow{\mathbf R}$ are the only almost Cross varieties of monoids satisfying the identities $xyx\approx x^2yx$ and $xty^2x\approx xtxy^2x$.
\end{proposition}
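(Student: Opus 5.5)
First we check that both varieties satisfy the two identities and are almost Cross. $\mathbf H$ is almost Cross by Proposition~\ref{P: H}(i), and it satisfies both identities by definition. $\overleftarrow{\mathbf R}$ is almost Cross by the dual of Proposition~\ref{P: E1}(ii). It lies in $\overleftarrow{\mathbf E}=\var\{xyx\approx x^2yx\approx xyx^2,\,xy^2x\approx y^2x^2\}$, which gives $xyx\approx x^2yx$ directly. For $xty^2x\approx xtxy^2x$ we use $xty^2x\approx xt(y^2x)\approx xt(xy^2x)$, rewriting $y^2x$ into $xy^2x$ via $y^2x^2\approx xy^2x$ and $xyx\approx xyx^2$; this is a routine check.

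Conversely, let $\mathbf V$ satisfy $xyx\approx x^2yx$ and $xty^2x\approx xtxy^2x$, and suppose it contains neither $\mathbf H$ nor $\overleftarrow{\mathbf R}$. If $\mathbf V$ is commutative or idempotent, it is Cross by Corollary~\ref{C: com id Cross}; note that $\mathbf B\nsubseteq\mathbf V$ because $\mathbf B$ violates $xty^2x\approx xtxy^2x$. Otherwise $M(xy)\in\mathbf V$ by Lemmas~\ref{L: isoterm} and~\ref{L: xy isoterm}. By Lemma~\ref{L: implies O}, $\mathbf V\subseteq\mathbf O$. Since $\mathbf H\nsubseteq\mathbf V$, Proposition~\ref{P: H}(ii) gives $\mathbf H_{m+1}\nsubseteq\mathbf V$ for some $m\ge2$, and Lemma~\ref{L: does not contain H} then shows that $\mathbf V$ satisfies $\xi_m$.

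The goal is to derive $\tau_{m'}$ for some $m'\ge2$, after which Lemma~\ref{L: O-2} finishes the proof. From $\xi_m$ we have $x^2y^2\mathbf c\approx (xy)^2\mathbf c$ with $\mathbf c=\prod_{i=1}^m t_i\mathbf a_i$, and we need $xy^2x\mathbf c\approx x^2y^2\mathbf c$. Since $\mathbf V\subseteq\mathbf O$ and $\mathbf c$ contains later occurrences of $x$ and $y$, the identities of $\mathbf O$ let us delete or insert non-first occurrences of $x$ in suitable positions. In particular, $xy^2x\mathbf c\approx xy^2\mathbf c$ should follow from $xzxyxty\approx xzyxty$-type rules, and likewise $(xy)^2\mathbf c\approx xy^2\mathbf c$ or $x^2y\mathbf c$. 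The main obstacle is that $\mathbf O$ only permits removing an occurrence of $x$ sandwiched between occurrences of $y$ that are followed by a later $y$, so obtaining $xy^2x\mathbf c\approx (xy)^2\mathbf c$ directly may fail. In that case I would use the remaining hypothesis, $\overleftarrow{\mathbf R}\nsubseteq\mathbf V$, as follows.

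If $Q\notin\mathbf V$, Lemma~\ref{L: Q nsubseteq V} gives $x^2yzx\approx x^2yxzx$, and Lemma~\ref{L: O} makes $\mathbf V$ Cross directly. So assume $Q\in\mathbf V$. Applying the dual of Lemma~\ref{L: does not contain Rn} to $\overleftarrow{\mathbf R}$ requires the $xyx\approx xyx^2$ hypothesis, which is not available. I would therefore instead exploit the chain $\overleftarrow{\mathbf R_k}$: the failure $\overleftarrow{\mathbf R}\nsubseteq\mathbf V$ together with an isoterm/stability argument in the style of the proof of Lemma~\ref{L: does not contain H} (stable sets of the form $\{x^sy^tx^r\,t_1\mathbf a_1^{r_1}\cdots\}$ with Lemma~\ref{L: id Q1} controlling simple letters) should yield an identity that, modulo $xyx\approx x^2yx$ and $\xi_m$, equates $xy^2x\mathbf c$ with $x^2y^2\mathbf c$, i.e.\ $\tau_{m'}$ for a suitable $m'$. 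With $\tau_{m'}$ in hand, Lemma~\ref{L: O-2} shows that $\mathbf V$ is Cross. Hence any almost Cross variety satisfying the two identities must contain, and thus equal, $\mathbf H$ or $\overleftarrow{\mathbf R}$.
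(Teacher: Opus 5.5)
Your argument follows the paper's up to obtaining $\xi_m$. However, the decisive step, producing some $\tau_{m'}$ so that Lemma~\ref{L: O-2} applies, is never carried out. In the case $Q\in\mathbf V$ you only say that a stability argument ``should yield'' the required identity. You never fix a stable set, never check that it is stable for $\overleftarrow{\mathbf R_{k+1}}$ but not for $\overleftarrow{\mathbf R_{k}}$, and never show how the resulting identity combines with $\xi_m$ to give $\tau_{m'}$.

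You are right that the hypothesis $\overleftarrow{\mathbf R}\nsubseteq\mathbf V$ must be used at this point. $\overleftarrow{\mathbf R}$ lies in $\mathbf O$ and satisfies $\xi_m$, yet it satisfies no $\tau_{m'}$ with $m'\ge2$, since otherwise Lemma~\ref{L: O-2} would make it Cross. So the rewriting inside $\mathbf O$ that you try first cannot work. As written, though, this step is simply missing.

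Moreover, your reason for not using the available tool is wrong. The identity $xyx\approx xyx^2$ \emph{does} hold in $\mathbf V$: substitute $y\mapsto 1$ in $xty^2x\approx xtxy^2x$ to get $xtx\approx xtx^2$. You need it anyway, because Lemma~\ref{L: Q nsubseteq V}, which you invoke when $Q\notin\mathbf V$, assumes it.

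So choose $m\ge2$ with $\mathbf H_{m+1},\overleftarrow{\mathbf R_{m+1}}\nsubseteq\mathbf V$. This is possible by Proposition~\ref{P: H}(ii) and the dual of Proposition~\ref{P: E1}(i), and since both chains increase, one $m$ serves both. The dual of Lemma~\ref{L: does not contain Rn} then gives $\chi_m$ directly, with no case split on $Q$. With $\mathbf c=\prod_{i=1}^m t_i\mathbf a_i$,
\[
x^2y^2\mathbf c\stackrel{\xi_m}\approx (xy)^2\mathbf c\stackrel{\chi_m}\approx (xy)^2x\mathbf c\approx xy^2x\mathbf c .
\]
The last step is $xyxyx\approx xyxy^2x\approx xy^3x\approx xy^2x$. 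It uses $yxy\approx yxy^2$, then the instance $t\mapsto y$ of $xty^2x\approx xtxy^2x$, then $y^3\approx y^2$. This is $\tau_m$, and Lemma~\ref{L: O-2} finishes the proof; this is exactly the paper's argument.
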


\begin{proof}
The varieties $\mathbf H$ and $\overleftarrow{\mathbf R}$ are almost Cross by Proposition~\ref{P: H}(i) and the dual of Proposition~\ref{P: E1}(ii). 
Let $\mathbf V$ be a variety satisfying the identities $xyx\approx x^2yx$ and $xty^2x\approx xtxy^2x$, and not containing the varieties $\mathbf H$ and $\overleftarrow{\mathbf R}$.
If $\mathbf V$ is commutative or idempotent, then, since $\mathbf B$ violates $xty^2x\approx xtxy^2x$ by Lemma~\ref{L: x=xx}, it follows from Corollary~\ref{C: com id Cross} that $\mathbf V$ is a Cross variety.
Therefore, suppose that $\mathbf V$ is neither commutative nor idempotent, whence $M(xy)\in\mathbf V$ by Lemmas~\ref{L: isoterm} and~\ref{L: xy isoterm}.
According to Proposition~\ref{P: H} and the dual of Proposition~\ref{P: E1}, $\mathbf H_{m+1},\overleftarrow{\mathbf R_{m+1}}\nsubseteq\mathbf V$ for some $m\ge2$.
Then $\mathbf V$ satisfies $\{\chi_m,\xi_m\}$ by Lemma~\ref{L: does not contain H} and the dual of Lemma~\ref{L: does not contain Rn}.
Since
\[
x^2y^2\biggl(\prod_{i=1}^m t_i\mathbf a_i\biggr)\stackrel{\xi_m}\approx (xy)^2\biggl(\prod_{i=1}^m t_i\mathbf a_i\biggr)\stackrel{\chi_m}\approx (xy)^2x\biggl(\prod_{i=1}^m t_i\mathbf a_i\biggr)\stackrel{xty^2x\approx xtxy^2x}\approx xy^2x\biggl(\prod_{i=1}^m t_i\mathbf a_i\biggr),
\]
we see that $\tau_m$ holds in $\mathbf V$ as well.
Now Lemma~\ref{L: O-2} applies, yielding that $\mathbf V$ is a Cross variety.
\end{proof}

\subsection{The varieties containing $\mathbf M(xyx)$}
\label{subsec: var M(xyx)}

Let
\[
\mathbf L:=\mathbf M(\{xt_1x\cdots t_nx\mid n\ge1\}), \ \mathbf Y_1:=\mathbf M(xzxyty),\ \mathbf Y_2:=\mathbf M(xyzxty,xtyzxy).
\]
Lee~\cite[Proposition~4.1]{Lee-14b} showed that
\[
\mathbf L=\var\{x^2\approx x^3,\, xyzxty\approx yxzxty,\, xzxyty\approx xzyxty,\, xzytxy\approx xzytyx\}.
\]

\begin{proposition}[\!{\cite[Proposition~5.1]{Jackson-05}}]
\label{P: Y_1 and Y_2}
The varieties $\mathbf Y_1$ and $\mathbf Y_2$ are non-finitely based\textup, finitely generated almost Cross varieties.
\end{proposition}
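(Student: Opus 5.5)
Since the statement is attributed to \cite[Proposition~5.1]{Jackson-05}, the plan follows Jackson's strategy, handling the three properties separately for $\mathbf Y_1$ and $\mathbf Y_2$. Finite generation is immediate. The sets $\{xzxyty\}$ and $\{xyzxty,\,xtyzxy\}$ are finite, so they have only finitely many subwords. Hence the Rees quotients $M(xzxyty)$ and $M(xyzxty,xtyzxy)$ are finite monoids, and by definition they generate $\mathbf Y_1$ and $\mathbf Y_2$. In particular, both varieties are locally finite.

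For non-finite basability, I would exhibit an infinite family of identities $\mathbf u_n\approx\mathbf v_n$ holding in $\mathbf Y_i$. These would be built from long chains of letters occurring twice, for instance words of the shape $x y_1 y_0 \cdots$ linked so that only a "global" rearrangement of two adjacent occurrences is possible. By Lemma~\ref{L: isoterm}, checking that $M(\mathscr W)$ satisfies such an identity reduces to checking that no substitution sends $\mathbf u_n$ onto a subword of a word in $\mathscr W$ unless it does the same for $\mathbf v_n$ with the same value. I would then show that for each fixed $k$, the set of all identities of $\mathbf Y_i$ in at most $k$ letters fails to derive $\mathbf u_n\approx\mathbf v_n$ once $n>k$. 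To do this, I would show that any identity of $\mathbf Y_i$ applied to $\mathbf u_n$ through a substitution with few letters cannot reach $\mathbf v_n$. The reason is that the subwords of $\mathbf u_n$ built from few letters remain isoterms for $\mathbf Y_i$, or can only be rearranged inside a stable set of words containing $\mathbf u_n$ but not $\mathbf v_n$, in the sense of the stable-set technique of~\cite{Sapir-21}.

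For the almost Cross property, let $\mathbf X$ be a proper subvariety of $\mathbf Y_i$. By Lemma~\ref{L: isoterm}, some word of $\mathscr W$ is not an isoterm for $\mathbf X$, so $\mathbf X$ satisfies a non-trivial identity $\mathbf w\approx\mathbf w'$ with $\mathbf w\in\mathscr W$. I would classify the possible $\mathbf w'$ modulo the identities of $\mathbf Y_i$. Such a $\mathbf w'$ either changes the content, which makes $xy$ or $x$ fail to be an isoterm and pushes $\mathbf X$ into a commutative or idempotent (hence Cross) variety via Lemma~\ref{L: xy isoterm} and Corollary~\ref{C: com id Cross}, or it swaps adjacent occurrences. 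In the swap case, $\mathbf X$ satisfies an identity such as $xzxyty\approx xzyxty$ (for $\mathbf Y_1$), or $xyzxty\approx yxzxty$ together with its mirror form (for $\mathbf Y_2$).

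Together with the identities of $\mathbf Y_i$ (which include $x^2\approx x^3$), these identities place $\mathbf X$ inside a variety such as $\mathbf L$, or a join of $\mathbf L$ with small pieces, whose subvariety lattice is known to be finite and whose members are finitely based. The main obstacle is this last step: showing that every subvariety of the resulting variety is finitely based and that there are only finitely many of them. I would first reduce identities of $\mathbf X$ to a canonical form controlled by the positions of occurrences of letters relative to the linear letters, so that only finitely many "short" identities are relevant. Finally, $\mathbf X$ is finitely generated because it is small and locally finite, by \cite[Lemma~2.1]{Jackson-Lee-18}.
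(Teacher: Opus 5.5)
Your finite-generation argument is fine. The other two parts are not proofs, and the almost Cross part rests on claims that are false.

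\textbf{The reduction to $\mathbf L$.} The decisive step is that the swap identities ``place $\mathbf X$ inside a variety such as $\mathbf L$ \dots\ whose subvariety lattice is known to be finite.'' This fails in two ways.

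First, $\mathfrak L(\mathbf L)$ is infinite. Every word $xt_1x\cdots t_nx$ is an isoterm for $\mathbf L$, so $\mathbf L$ contains each $\mathbf M(xt_1x\cdots t_nx)$. These form a strictly increasing chain, because $M(xt_1x\cdots t_nx)$ satisfies $xt_1x\cdots t_{n+1}x\approx x^2t_1x\cdots t_{n+1}x$. In fact $\mathbf L$ is itself almost Cross (Proposition~\ref{P: L}).

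Second, for $\mathbf Y_2$ a proper subvariety only needs \emph{one} of the two words to stop being an isoterm, so it need not satisfy the mirror identity. Take $\mathbf M(xtyzxy)\subsetneq\mathbf Y_2$. It satisfies $xyzxty\approx yxzxty$: if $x$ and $y$ both get non-empty values, each must be a single repeated letter. The only adjacent occurrence of the two repeated letters of $xtyzxy$ is the final $xy$, with nothing after it, so both sides vanish. Yet $xtyzxy$ is an isoterm for this variety, so it violates $xtyzxy\approx xtyzyx$. That identity is a renaming of the $\mathbf L$-identity $xzytxy\approx xzytyx$. So such subvarieties do not lie in $\mathbf L$ at all. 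Showing they are Cross, even though $\mathbf M(xyzxty)\vee\mathbf M(xtyzxy)=\mathbf Y_2$ is not finitely based, is exactly the content you defer as ``the main obstacle.''

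\textbf{The classification of witnesses.} Your dichotomy for $\mathbf w'$ is also incomplete. The identity $xzxyty\approx xzx^3yty$ keeps the alphabet and is not a swap. It holds in $\mathbf M(xy)\subseteq\mathbf Y_1$, where $xy$ is still an isoterm.

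\textbf{A workable split.} Split on whether $\mathbf M(xyx)\subseteq\mathbf X$.
\begin{itemize}
\item If not, and $xy$ is an isoterm, $\mathbf X$ satisfies some $xyx\approx x^pyx^q$ with $(p,q)\ne(1,1)$. Since $\mathbf Y_i$ satisfies $x^2\approx x^3$ and $x^2y\approx yx^2$, this yields $xyx\approx x^2y$, hence $\mathbf X\subseteq\mathbf M(xy)$.
\item If so, then $xzx$, $xzxt$ and $zyty$ are isoterms for $\mathbf X$. For $\mathbf Y_1$ this forces the witness to be exactly $xzxyty\approx xzyxty$, so $\mathbf X\subseteq\mathbf L$. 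Then $\mathbf X$ is Cross, not because $\mathfrak L(\mathbf L)$ is finite, but because $\mathbf L$ is almost Cross and $\mathbf L\nsubseteq\mathbf Y_1$ (as $\mathbf Y_1$ satisfies $xt_1xt_2x\approx x^2t_1xt_2x$).
\item For $\mathbf Y_2$ the same analysis only gives $\mathbf X\subseteq\mathbf Y_2\{xyzxty\approx yxzxty\}$ or its dual. You still need a separate argument that this variety is finitely based and small.
\end{itemize}

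\textbf{Non-finite basis.} This part is only a template. You never specify the identities $\mathbf u_n\approx\mathbf v_n$ or check that they hold in $M(\mathscr W)$. You also never give the invariant that blocks derivations from identities in fewer letters. That is the core of Jackson's theorem, which the paper simply cites, and it cannot be left at this level.
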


\begin{proposition}[\!{\cite[Theorem~2]{Lee-13}}]
\label{P: L}
The variety $\mathbf L$ is a non-finitely generated almost Cross variety.
\end{proposition}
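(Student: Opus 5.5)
This statement is quoted from Lee~\cite[Theorem~2]{Lee-13}, so within the paper it is simply invoked. If I had to reprove it, I would establish three things in turn: $\mathbf L$ is not Cross, $\mathbf L$ is not finitely generated, and every proper subvariety of $\mathbf L$ is Cross.

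\emph{Description of the subvarieties.} I would work with the identity basis
\[
x^2\approx x^3,\quad xyzxty\approx yxzxty,\quad xzxyty\approx xzyxty,\quad xzytxy\approx xzytyx .
\]
These identities say the following: once a letter occurs at least twice, only the positions of its first and last occurrences matter, and adjacent non-simple letters commute when they are not separated by a boundary occurrence. Consequently, modulo $\mathbf L$ every word reduces to a canonical form. This form records the simple letters, and, for each multiple letter, the block between consecutive simple letters containing its first occurrence and the block containing its last occurrence.

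Since the words $\mathbf w_n:=xt_1x\cdots t_nx$ are isoterms for $\mathbf L$ by Lemma~\ref{L: isoterm}, the next step is to show that any proper subvariety $\mathbf V\subset\mathbf L$ fails to have some $\mathbf w_n$ as an isoterm. Indeed, if all the $\mathbf w_n$ were isoterms, then Lemma~\ref{L: isoterm} would give $\mathbf M(\{\mathbf w_n\})\subseteq\mathbf V$, that is, $\mathbf V=\mathbf L$. Then $\mathbf V$ satisfies a non-trivial identity $\mathbf w_n\approx\mathbf w'$. Using the canonical forms, I would show that any such identity (for the least such $n$) is equivalent modulo $\mathbf L$ to one of finitely many explicit identities. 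Examples are $\mathbf w_n\approx x^2t_1x\cdots t_nx$, or its analogue with the occurrence of $x$ moved past one $t_i$, or identities that identify multiplicities.

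The main step, and the expected obstacle, is to show that $\mathbf L\{\mathbf w_n\approx\mathbf w'\}$ has only finitely many subvarieties and is finitely based. I would first try to prove that the identity collapses every $\mathbf w_N$ with $N\ge n$. After that, any identity of $\mathbf L$ can be reduced to a finite list of identities in boundedly many letters, using the canonical forms. Local finiteness would follow from $x^2\approx x^3$ together with the reductions, and then Lemma~\ref{L: xyx=xyxx is finitely generated}-type reasoning (local finiteness plus smallness) would give finite generation.

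\emph{Non-Cross status of $\mathbf L$.} The chain $\mathbf M(\mathbf w_1)\subset\mathbf M(\mathbf w_1,\mathbf w_2)\subset\cdots$ is strict, since $\mathbf w_{n+1}$ is not an isoterm for $\mathbf M(\mathbf w_1,\dots,\mathbf w_n)$, as can be checked via the Rees quotient. Hence $\mathbf L$ has infinitely many subvarieties and is not Cross. The same chain shows that $\mathbf L$ is not finitely generated: a finite generator would satisfy some identity $\mathbf w_n\approx\mathbf w'$ for large $n$, because a finite monoid cannot distinguish arbitrarily long such patterns.
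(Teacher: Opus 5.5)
The paper gives no proof of this statement; it is taken from Lee's paper. Judged on its own, your outline gets the easy parts right. Since $M(\mathbf w_n)$ satisfies $\mathbf w_{n+1}\approx x^2t_1\cdots t_{n+1}$, the chain $\mathbf M(\mathbf w_1)\subset\mathbf M(\mathbf w_2)\subset\cdots$ is strict, so $\mathbf L$ is not small and hence not Cross. Also, every proper subvariety does fail to have some $\mathbf w_n=xt_1x\cdots t_nx$ as an isoterm. But the decisive part, that every proper subvariety of $\mathbf L$ is finitely based and small, is only announced, and the normal form you plan to use for it is wrong.

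It is not true modulo $\mathbf L$ that only the first and last occurrences of a multiple letter matter. That would give $xt_1xt_2x\approx xt_1t_2x$, contradicting the isoterm property of $\mathbf w_2$ that you use two sentences later. What actually happens in $M(\{\mathbf w_n\})$ is that a multiple letter can only take the values $1$ or $x$ without producing $0$. Hence multiple letters commute inside each block between consecutive simple letters, and $x^2$ is central and swallows all other occurrences of $x$. Otherwise, every block in which $x$ occurs is remembered, not just the first and the last.

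Note also that the four displayed identities do not define $\mathbf L$ by themselves. The monoid $D$ of Subsection~\ref{subsec: xyxz=xyxzx} satisfies all four but violates $x^2y\approx yx^2$, which holds in $\mathbf L$. So at least $x^2y\approx yx^2$ must be added before any normal-form argument.

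Finally, knowing that all $\mathbf w_N$ with $N\ge n$ collapse concerns a single multiple letter. It does not by itself control words with many multiple letters, each spread over few blocks. That control is exactly what finite basis and smallness of proper subvarieties require. A workable target would be to show that every proper subvariety lies in some $\mathbf M(\mathbf w_n)$, and that each $\mathbf M(\mathbf w_n)$ is finitely based with finitely many subvarieties; your sketch does neither.

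Your argument that $\mathbf L$ is not finitely generated also rests on a false claim: a finite monoid \emph{can} have every $\mathbf w_n$ as an isoterm. Take the Brandt monoid $B_2^1=\langle a,b\mid aba=a,\,bab=b,\,a^2=b^2=0\rangle\cup\{1\}$. Sending $x\mapsto1$ reduces to the linear word $t_1\cdots t_n$, which is an isoterm, so the $t_i$ must stay in place. Then $x\mapsto a$, $t_i\mapsto b$ sends $\mathbf w_n$ to $a\ne0$, while every other $x^{e_0}t_1x^{e_1}\cdots t_nx^{e_n}$ is sent to $0$ or to an element different from $a$.

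What rescues the conclusion is that a generator would have to lie in $\mathbf L$. By Lemma~\ref{L: isoterm}, $\mathbf L\subseteq\var B_2^1$, so $\mathbf L$ is locally finite. Suppose $\mathbf L=\var M$ with $|M|=k$. The $k$-letter equational theories of $\mathbf M(\mathbf w_1)\subseteq\mathbf M(\mathbf w_2)\subseteq\cdots$ decrease to that of $\mathbf L$, because any evaluation in $M(\{\mathbf w_n\})$ lands in a copy of some $M(\mathbf w_N)$. The theory of $\mathbf L$ has finite index, so the chain stabilizes. Hence the $k$-element monoid $M$ satisfies all identities of some $\mathbf M(\mathbf w_N)$, which contradicts the strictness of your chain.
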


\subsection{The varieties containing $\mathbf A_0$}
\label{subsec: A1}

Let $\mathbf A$ denote the variety generated by the monoid 
\[
A:=\langle a,b,c\mid a^2=a,\,b^2=b,\,ab=ca=0,\,ac=cb=c\rangle\cup\{1\}=\{a,b,c,ba,bc,0,1\}.
\]
In view of~\cite[Remark~3.7]{Gusev-Lee-Zhang-26}, we have $\mathbf A:=\mathbf H\{\sigma_3\}$.
Put
\[
\mathbf N:=\mathbf A\vee\mathbf R_2,\quad
\mathbf S:=\mathbf A_0\vee\mathbf R_2\vee \overleftarrow{\mathbf R_2},\quad
 \mathbf Z:=\mathbf A\vee\overleftarrow{\mathbf A}.
\]

\begin{lemma}[{\!\cite[proof of Lemma~4.5]{Sapir-23}}]
\label{L: E1{sigma2} or dA1 nsubseteq V}
Let $\mathbf V$ be a monoid variety that satisfies $xyx \approx x^2yx \approx xyx^2$ and contains neither $\mathbf R_2$ nor $\overleftarrow{\mathbf A}$. 
Then $\mathbf V$ satisfies the identity $xty^2x\approx xtxy^2x$.
\end{lemma}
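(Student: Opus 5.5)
The natural route is the same case split on $Q$ used in Lemmas~\ref{L: does not contain Rn} and~\ref{L: does not contain H}. I do not yet see how to close the degenerate case, so I treat it first.

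\emph{The degenerate case is a genuine gap.} If $\mathbf V$ is commutative, the identity holds: both sides have the same content, and $x^2\approx x^3$ follows from $xyx\approx xyx^2$ with $y=1$. If $\mathbf V$ is idempotent, however, the hypotheses seem too weak as stated. The variety $\mathbf B$ satisfies $xyx\approx x^2yx\approx xyx^2$. It contains neither $\mathbf R_2$ nor $\overleftarrow{\mathbf A}$, since both contain the non-idempotent monoid $M(x)$. Yet $\mathbf B$ appears to violate $xty^2x\approx xtxy^2x$. Both sides reduce in $\mathbf B$ to $xtyx$ and $xtxyx$. By Lemma~\ref{L: x=xx}, the $\ell$-parts are $xt$ and $xtx$. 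Applying Lemma~\ref{L: x=xx} again, their $r$-parts are $t$ and $tx$, and these differ. So I would first check this computation carefully. If it holds, the statement should carry the extra hypothesis $M(xy)\in\mathbf V$ (equivalently, $\mathbf V$ not idempotent, by Lemmas~\ref{L: isoterm} and~\ref{L: xy isoterm}), as in the surrounding lemmas. I would prove that version and flag the discrepancy rather than claim the statement as written. From here on I assume $M(xy)\in\mathbf V$.

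\emph{Case $Q\notin\mathbf V$.} Lemma~\ref{L: Q nsubseteq V} gives $x^2yzx\approx x^2yxzx$. Then
\[
xty^2x\approx x^2ty^2x\approx x^2txy^2x\approx xtxy^2x .
\]
The outer steps use $xyx\approx x^2yx$. The middle step applies $x^2yzx\approx x^2yxzx$ with $y\mapsto t$ and $z\mapsto y^2$.

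\emph{Case $Q\in\mathbf V$.} Here Lemma~\ref{L: id Q1} strongly restricts the identities of $\mathbf V$: the simple letters and the contents of the blocks between them are fixed. I would use non-containment twice, through Sapir's stable-set method (\cite[Corollary~3.6]{Sapir-21}), exactly as in the proof of Lemma~\ref{L: does not contain H}.
\begin{itemize}
\item[(1)] From $\mathbf R_2\nsubseteq\mathbf V$, using $\mathbf R_2=\mathbf E\{\overleftarrow{\sigma_2}\}$, I would extract an identity of $\mathbf V$ equivalent modulo $xyx\approx x^2yx\approx xyx^2$ to an identity swapping the order of $x$ and $y$ after a two-block prefix, e.g. $yt_2xt_1xy\approx yt_2xt_1yx$, possibly with powers on the blocks. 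This would follow the argument of Lemma~\ref{L: does not contain Rn}.
\item[(2)] From $\overleftarrow{\mathbf A}\nsubseteq\mathbf V$, where $\overleftarrow{\mathbf A}=\overleftarrow{\mathbf H}\{\overleftarrow{\sigma_3}\}$, I would pick a set of words, built from $x t y^2 x$ with pieces of the form $t_i\mathbf a_i$ attached, that is stable for $\overleftarrow{\mathbf A}$. Stability would be checked against a description of the identities of $\overleftarrow{\mathbf A}$, presumably from its generator. Then $\mathbf V$ satisfies an identity leaving this set.
\end{itemize}
Lemma~\ref{L: id Q1} then pins down the right-hand side up to the arrangement of $x$ and $y$ inside a block. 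In each case this yields either $xty^2x\approx xtxy^2x$ directly, or $xty^2x\approx xtyxyx$, or an identity moving an $x$ across $y^2$ in the presence of the context.

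Finally I would combine the identities from (1) and (2), together with $x^2\approx x^3$ and the padding trick of inserting and deleting squares via $xyx\approx x^2yx\approx xyx^2$. This would let me delete the auxiliary letters $t_i$ by substituting $1$ and arrive at $xty^2x\approx xtxy^2x$.

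\emph{Main obstacle.} Step (2) is the crux. It requires choosing exactly the right stable set for $\overleftarrow{\mathbf A}$ and then classifying the possible escaping identities. In this classification the identities obtainable from $\mathbf R_2$ must cover every configuration that $\overleftarrow{\mathbf A}$ alone does not forbid. I would first try the set $\{x^{p}t\,y^{q}x^{r}\,t_1\mathbf a_1^{r_1}t_2\mathbf a_2^{r_2}\}$, mirroring the set $\mathcal A$ in Lemma~\ref{L: does not contain H}.
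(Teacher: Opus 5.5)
Your objection to the printed statement is correct. $\mathbf B$ satisfies $xyx\approx x^2yx\approx xyx^2$. It contains neither $\mathbf R_2$ nor $\overleftarrow{\mathbf A}$, since both contain $M(x)$. Yet it violates $xty^2x\approx xtxy^2x$, as the paper itself notes in the proofs of Lemma~\ref{L: O-2} and Proposition~\ref{P: xyx=xxyx,xtyyx=xtxyyx}. (Minor slip: $r(xtx)=x$, not $tx$. The conclusion stands, since $xt\approx xtx$ is the left regular band identity, which fails in $\mathbf B$.) So some non-idempotency hypothesis, such as your $M(xy)\in\mathbf V$, has to be added. This costs nothing in the paper: in Case~2.2 of Theorem~\ref{T: main result} one has $M(xy)\in\mathbf V$, and inside $\mathbf N$ and $\mathbf S$ the idempotent subvarieties satisfy the identity and its dual anyway. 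Your case $Q\notin\mathbf V$ is also correct. The paper gives no argument of its own here; it only refers to the proof of Lemma~4.5 in~\cite{Sapir-23}.

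The genuine gap is the case $Q\in\mathbf V$. This is the only place where excluding $\mathbf R_2$ and $\overleftarrow{\mathbf A}$ is used, so it is the entire content of the lemma, and you leave it as an unexecuted plan.

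Step~(1) aims at the wrong identity. The identity $yt_2xt_1xy\approx yt_2xt_1yx$ is just $\overleftarrow{\sigma_2}$, which holds in $\mathbf R_2$. So it cannot be what $\mathbf R_2\nsubseteq\mathbf V$ yields. More to the point, identities valid in $\mathbf R_2$ can never produce $xty^2x\approx xtxy^2x$, because that identity fails in $\mathbf R_2$. What you need is an identity failing in $\mathbf R_2$. It should reverse, after a simple letter $t$, the order of the first occurrences of a letter $x$ present before $t$ and a letter $y$ absent before $t$; for instance an $\mathbf R_2$-analogue of Lemma~\ref{L: does not contain Rn}, like $xt(xy)^2\approx xty(xy)^2$.

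Similarly, from $\overleftarrow{\mathbf A}\nsubseteq\mathbf V$ you need an identity destroying what $\overleftarrow{A}$ detects under $t\mapsto c$, $y\mapsto a$, $x\mapsto b$, with all other letters sent to $1$. That property is: when $y$ is absent before the simple letter $t$, every occurrence of $y$ after $t$ precedes every occurrence of $x$. You do not fix or verify a stable set for this, and you do not classify the escaping identities. Nor do you show how the two resulting identities, together with Lemma~\ref{L: id Q1} and $xyx\approx x^2yx\approx xyx^2$, yield the target. As it stands, the proposal proves the corrected lemma only when $Q\notin\mathbf V$.
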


The next two statements provide new examples of almost Cross varieties of aperiodic monoids.

\begin{proposition}
\label{P: N}
The variety $\mathbf N$ is a non-finitely based\textup, finitely generated almost Cross variety.
\end{proposition}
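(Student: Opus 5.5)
Since $\mathbf N=\mathbf A\vee\mathbf R_2$ and both joinands are generated by finite monoids ($\mathbf A$ by $A$, and $\mathbf R_2$ being a Cross subvariety of $\mathbf E$ by Proposition~\ref{P: E1}), $\mathbf N$ is generated by the finite monoid $A\times R$, where $R$ is a finite generator of $\mathbf R_2$. So $\mathbf N$ is finitely generated. It therefore remains to show three things: $\mathbf N$ is non-finitely based, and every proper subvariety of $\mathbf N$ is Cross; smallness of $\mathbf N$ then follows as in Proposition~\ref{P: G}, because finitely many maximal subvarieties, each small, give a small lattice.

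For non-finite basedness I would use the standard isoterm/chain argument in the style of Jackson and Sapir. First exhibit an infinite series of identities holding in $\mathbf N$, for instance variants of $\sigma_m$ with a left prefix, built from the alternating words $\mathbf a_i$ and chosen so that they hold in $\mathbf R_2$ via $\overleftarrow{\sigma_2}$ and in $\mathbf A$ via $\sigma_3$. Then show that for each $n$ the $n$-th identity cannot be derived from the identities of $\mathbf N$ in fewer than $n$ letters. The tool is Lemma~\ref{L: id Q1}, since $Q\in\mathbf A$, together with the fact that $\mathbf N$ contains $\mathbf R_2$, which forces $\overleftarrow{\mathbf B_2}\subseteq\mathbf N$. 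With these I would verify that every short identity of $\mathbf N$, applied to the left side, preserves a "$x$ before $y$" pattern that the right side violates. I expect this step to be the main obstacle: one must pick the identities carefully so that they are sensitive to both the $\mathbf A$ part and the $\mathbf R_2$ part.

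For the almost Cross property, let $\mathbf V\subset\mathbf N$ be proper. Then $\mathbf N$ satisfies $xyx\approx x^2yx\approx xyx^2$, so $\mathbf V$ does as well. Either $\mathbf R_2\nsubseteq\mathbf V$ or $\mathbf A\nsubseteq\mathbf V$.

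\emph{Case $\mathbf R_2\nsubseteq\mathbf V$.} We have $\overleftarrow{\mathbf A}\nsubseteq\mathbf N$, because $\overleftarrow{\mathbf A}$ violates $\sigma_3$-type identities dual to those of $\mathbf N$. Hence Lemma~\ref{L: E1{sigma2} or dA1 nsubseteq V} gives $xty^2x\approx xtxy^2x$ in $\mathbf V$. Proposition~\ref{P: xyx=xxyx,xtyyx=xtxyyx} then applies: $\mathbf V$ satisfies $\sigma_3$ and so excludes $\mathbf H$, and it excludes $\overleftarrow{\mathbf R}$ because $\mathbf A$ and $\mathbf R_2$ satisfy $\overleftarrow{\sigma_2}$-type identities failing in $\overleftarrow{\mathbf R}$. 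Thus $\mathbf V$ is Cross.

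\emph{Case $\mathbf A\nsubseteq\mathbf V$.} I would show $\mathbf V$ satisfies $x^2y^2\approx(xy)^2$; that is the behaviour of $\mathbf R_2\subseteq\mathbf E$, and it should be forced by excluding $\mathbf A$, via a stable-set argument like that in Lemma~\ref{L: does not contain H} using Lemma~\ref{L: id Q1} when $Q\in\mathbf V$. Then $\mathbf V$ lies in the scope of Proposition~\ref{P: xyx=xxyx=xyxx,xxyy=xyxy}. It excludes $\mathbf B$ (which violates $xyx\approx x^2yx$ is false, so rather: $\mathbf B$ is not contained in $\mathbf N$ since $\mathbf N$ satisfies $x^2\approx x^3$ with $xy$ isoterm—more simply, $\mathbf N$ satisfies $xyx\approx x^2yx$ while $\mathbf B\nsubseteq\mathbf V\subseteq\mathbf N$ follows from Lemma~\ref{L: x=xx} applied to an identity of $\mathbf N$ such as $xty^2x\approx xtxy^2x$ failing in $\mathbf B$ restricted appropriately). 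It also excludes $\mathbf R$ and $\overleftarrow{\mathbf R}$, since these are non-finitely generated and the relevant identities $\sigma$ and $\overleftarrow{\sigma_2}$ hold in $\mathbf N$. Hence $\mathbf V$ is Cross.

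As a fallback if that inclusion is hard to establish directly, I would instead use Lemma~\ref{L: O-2} after deriving $\tau_m$.
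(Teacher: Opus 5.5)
There is a genuine gap, and it is in the case $\mathbf A\nsubseteq\mathbf V$. Your claim that such a $\mathbf V$ satisfies $x^2y^2\approx(xy)^2$ is false. By Fig.~\ref{pic: L(H)}, $\mathbf A_0\subseteq\mathbf H_2\subseteq\mathbf H_3=\mathbf A\subseteq\mathbf N$. So $\mathbf V=\mathbf A_0$ (or $\mathbf A_0\vee\mathbf R_2$) is a proper subvariety of $\mathbf N$ not containing $\mathbf A$. Yet $A_0$ violates $x^2y^2\approx(xy)^2$: under $x\mapsto a$, $y\mapsto b$ you get $ab\ne 0=abab$.

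A stable-set argument as in Lemma~\ref{L: does not contain H} only yields identities with a tail, such as $\xi_m$. The bare identity is what Lemma~\ref{L: A01 nsubseteq V} gives, and that lemma needs $A_0\notin\mathbf V$.

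Your fallback through Lemma~\ref{L: O-2} also fails. That lemma needs $xty^2x\approx xtxy^2x$, and $\mathbf V=\mathbf R_2$ violates it. Otherwise $\mathbf N=\mathbf A\vee\mathbf R_2$ would satisfy it together with $xyx\approx x^2yx$ and $\tau_3$, and would be Cross by Lemma~\ref{L: O-2}.

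The missing idea is to pass to the dual side:
\begin{itemize}
\item $\overleftarrow{\sigma_2}$ holds in $\mathbf N$. It holds in $\mathbf R_2$ by definition, and in $\mathbf A$ by a short derivation from $xyx\approx xyx^2$, $xty^2x\approx xtxy^2x$ and $(xy)^2\approx(yx)^2$, with the prefix $yt_2xt_1$ supplying earlier occurrences of $x$ and $y$.
\item $\overleftarrow{\sigma_2}$ fails in $\overleftarrow{\mathbf B_2}\subseteq\overleftarrow{\mathbf R_2}$, so $\overleftarrow{\mathbf R_2}\nsubseteq\mathbf N$.
\item The dual of Lemma~\ref{L: E1{sigma2} or dA1 nsubseteq V} then gives $xy^2tx\approx xy^2xtx$ in $\mathbf V$.
\item The duals of Lemmas~\ref{L: implies O} and~\ref{L: O}, applied with $\overleftarrow{\sigma_2}$, show that $\mathbf V$ is Cross.
\end{itemize}

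The rest of the proposal also needs repair, though these are fixable.

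In the case $\mathbf R_2\nsubseteq\mathbf V$, your assertion that $\mathbf V$ satisfies $\sigma_3$ is wrong. We have $\mathbf B_2\subseteq\mathbf R_2\subseteq\mathbf N$, and $\mathbf B_2$ violates every $\sigma_m$ because the initial parts of the two sides differ; $\mathbf V=\mathbf A\vee\mathbf B_2$ is a counterexample. What $\mathbf N$ does satisfy is $\tau_3$: $\mathbf A$ via $\sigma_3$, and $\mathbf R_2$ via $xy^2x\approx x^2y^2$. With $\tau_3$, Lemma~\ref{L: O-2} applies directly, and $\tau_3$ is also the correct reason why $\mathbf H\nsubseteq\mathbf N$.

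The witness for $\overleftarrow{\mathbf A}\nsubseteq\mathbf N$ should be $\overleftarrow{\sigma_2}$, since $\mathbf A=\mathbf H_3$ violates $\sigma_2$; a ``$\sigma_3$-type'' identity does not work. Your paragraph on excluding $\mathbf B$ is not a valid argument, because $xty^2x\approx xtxy^2x$ fails in $\mathbf N$; again $\overleftarrow{\sigma_2}$ would do.

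Finally, non-finite basedness is not actually proved: no identities are specified. The premise that $\mathbf R_2$ forces $\overleftarrow{\mathbf B_2}\subseteq\mathbf N$ is false. $\mathbf R_2\subseteq\mathbf E$ contains $\mathbf B_2$, not $\overleftarrow{\mathbf B_2}$, and $\overleftarrow{\mathbf B_2}$ violates $\overleftarrow{\sigma_2}$, which holds in $\mathbf N$. This is a substantial known result, \cite[Theorem~4.6]{Sapir-23}, and it should be cited rather than sketched.
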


\begin{proof}
The variety $\mathbf N=\mathbf A\vee\mathbf R_2$ is finitely generated because the varieties $\mathbf A$ and $\mathbf R_2$ are so.
It is shown in~\cite[Theorem~4.6]{Sapir-23} that the variety $\mathbf N$ is non-finitely based, whence it is not a Cross variety.
Let $\mathbf V$ be a proper subvariety of $\mathbf N$.
Clearly, either $\mathbf A\nsubseteq\mathbf V$ or $\mathbf R_2\nsubseteq\mathbf V$.

Suppose that $\mathbf A\nsubseteq\mathbf V$.
Since $\overleftarrow{\mathbf R_2}$ is not a subvariety of $\mathbf N$, the dual of Lemma~\ref{L: E1{sigma2} or dA1 nsubseteq V} implies that $\mathbf V$ satisfies $xy^2tx\approx xy^2xtx$.
By the very definition, the variety $\mathbf R_2$ satisfies the identity $\overleftarrow{\sigma_2}$.
The variety $\mathbf A$ satisfies the identity $\overleftarrow{\sigma_2}$ as well because
\[
\mathbf cxy\stackrel{xyx\approx xyx^2}\approx \mathbf cx^4y^4\stackrel{xty^2x\approx xtxy^2x}\approx \mathbf c(x^2y^2)^2\stackrel{(xy)^2\approx (yx)^2}\approx\mathbf c(y^2x^2)^2\stackrel{xty^2x\approx xtxy^2x}\approx \mathbf cy^2x^2\stackrel{xyx\approx xyx^2}\approx \mathbf cyx,
\]
where $\mathbf c:=\mathbf a_2t_2\mathbf a_1t_1=yt_2xt_1$.
Now the duals of Lemmas~\ref{L: O} and~\ref{L: implies O} apply, yielding that $\mathbf V$ is a Cross variety.

Now suppose that $\mathbf R_2\nsubseteq\mathbf V$.
Since $\overleftarrow{\mathbf A}$ is not a subvariety of $\mathbf N$, Lemma~\ref{L: E1{sigma2} or dA1 nsubseteq V} implies that $\mathbf V$ satisfies $xty^2x\approx xtxy^2x$.
The variety $\mathbf A$ satisfies the identity $\sigma_3$ and, therefore, the identity $\tau_3$.
The variety $\mathbf R_2$ satisfies the identity $\tau_3$ as well because
\[
x^2y^2\mathbf d\stackrel{x^2y^2\approx xy^2x}\approx xy^2x\mathbf d\stackrel{xyx\approx xyx^2}\approx xy^2x^2\mathbf d\stackrel{x^2y^2\approx xy^2x}\approx xyx^2y\mathbf d\stackrel{xyx\approx xyx^2}\approx (xy)^2\mathbf d,
\]
where $\mathbf d:=t_1\mathbf a_1t_2\mathbf a_2t_3\mathbf a_3=t_1xt_2yt_3x$.
Now Lemma~\ref{L: O-2} applies, yielding that $\mathbf V$ is a Cross variety.

Thus, we have proved that $\mathbf V$ is a Cross variety in any case. 
Since $\mathbf V$ is an arbitrary proper subvariety of $\mathbf N$, it follows that $\mathbf N$ is an almost Cross variety.
\end{proof}

\begin{proposition}
\label{P: S}
The variety $\mathbf S$ is a non-finitely based\textup, finitely generated almost Cross variety.
\end{proposition}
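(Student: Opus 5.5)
Proving this follows the template of Proposition~\ref{P: N}. First, $\mathbf S=\mathbf A_0\vee\mathbf R_2\vee\overleftarrow{\mathbf R_2}$ is finitely generated because each joinand is: $\mathbf A_0$ is generated by $A_0$, and $\mathbf R_2$, $\overleftarrow{\mathbf R_2}$ lie in the lattice of Fig.~\ref{pic: L(E1)} and its dual. Each of them is small and satisfies $xyx\approx xyx^2$, so Lemma~\ref{L: xyx=xyxx is finitely generated} applies. For non-finite basedness I would cite the corresponding result of Sapir~\cite{Sapir-23}, as was done for $\mathbf N$; I expect that same paper contains it. If it does not, a direct argument is needed: exhibit an infinite family of identities of $\mathbf S$ in the spirit of $\sigma_m$ and $\tau_m$ that is not derivable from identities in boundedly many letters. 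That would be the main obstacle, but I plan to rely on the citation. So $\mathbf S$ is not Cross.

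Next, let $\mathbf V$ be a proper subvariety of $\mathbf S$. Note that $\mathbf S$ satisfies $xyx\approx x^2yx\approx xyx^2$, since all three joinands do ($\mathbf A_0$ via $xyzx\approx xyxzx$ and $(xy)^2\approx(yx)^2$). One of the three joinands is missing from $\mathbf V$, and I split into cases accordingly.

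If $\mathbf R_2\nsubseteq\mathbf V$: since $\overleftarrow{\mathbf A}\nsubseteq\mathbf S$, Lemma~\ref{L: E1{sigma2} or dA1 nsubseteq V} gives $xty^2x\approx xtxy^2x$ in $\mathbf V$. It must be checked that $\overleftarrow{\mathbf A}\nsubseteq\mathbf S$. I would do this by finding an identity of $\mathbf S$ failing in $\overleftarrow{\mathbf A}$, for instance the dual of a suitable $\sigma$-type identity, or by using that $\mathbf A\nsubseteq\mathbf S$ because $\mathbf A_0\vee\mathbf R_2\vee\overleftarrow{\mathbf R_2}$ satisfies an identity that $\mathbf A=\mathbf H\{\sigma_3\}$ violates, and then applying duality, since $\mathbf S$ is self-dual. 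Then I verify $\tau_m$ for some $m\ge2$ in each remaining joinand. For $\overleftarrow{\mathbf R_2}$, the computation for $\mathbf R_2$ in the proof of Proposition~\ref{P: N} dualizes suitably, or I use $x^2y^2\approx(xy)^2$ together with $xty^2x\approx xtxy^2x$-type reasoning. For $\mathbf A_0$ I use $xyzx\approx xyxzx$ and $(xy)^2\approx(yx)^2$: the letter $x$ occurring after the prefix forces $x^2y^2\mathbf d\approx xy^2x\mathbf d$ with $\mathbf d=t_1xt_2yt_3x$. Lemma~\ref{L: O-2} then makes $\mathbf V$ Cross.

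If $\overleftarrow{\mathbf R_2}\nsubseteq\mathbf V$, the case is dual. If $\mathbf A_0\nsubseteq\mathbf V$, Lemma~\ref{L: A01 nsubseteq V} gives $x^2y^2\approx(xy)^2$. Then $\mathbf V$ is Cross by Proposition~\ref{P: xyx=xxyx=xyxx,xxyy=xyxy}, because $\mathbf V\subseteq\mathbf S$ cannot contain $\mathbf B$ (not finitely generated/aperiodic band issue: $\mathbf S$ satisfies $x^2\approx x^3$ but $\mathbf B\nsubseteq\mathbf S$ since $\mathbf S$ satisfies $xyx\approx x^2yx$ which is fine in $\mathbf B$; instead use that $\mathbf S$ satisfies $xyzxy$-type identities failing in $\mathbf B$), nor $\mathbf R$ or $\overleftarrow{\mathbf R}$, as these are non-finitely generated while $\mathbf S$ is finitely generated and small-intervals argue otherwise. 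More carefully, $\mathbf R\nsubseteq\mathbf S$ because $\mathbf S$ satisfies $\overleftarrow{\sigma_2}$-related identities and $\mathbf R$ does not; I would verify this on generators. These non-containment checks are the delicate part.
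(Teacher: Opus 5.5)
Your case split, and the lemmas you invoke in each case, match the paper. What the plan lacks is the observation the paper records first and on which every case rests: $\mathbf S$ itself satisfies both $\tau_3$ and $\overleftarrow{\tau_3}$.

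In the case $\mathbf R_2\nsubseteq\mathbf V$ you check $\tau_m$ only on $\mathbf A_0$ and $\overleftarrow{\mathbf R_2}$. Failing to contain $\mathbf R_2$ does not put $\mathbf V$ inside $\mathbf A_0\vee\overleftarrow{\mathbf R_2}$. Already $\mathbf V=\mathbf B_2$ is a counterexample: it violates $\sigma_2$, which holds in $\mathbf A_0\vee\overleftarrow{\mathbf R_2}$. So you need $\tau_3$ on $\mathbf R_2$ as well; there it is immediate from $xy^2x\approx x^2y^2$.

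More seriously, your argument for $\overleftarrow{\mathbf R_2}$ fails. Dualizing the $\mathbf R_2$ computation in Proposition~\ref{P: N} yields the mirror image of $\tau_3$, not $\tau_3$. Nor can any argument from identities of $\overleftarrow{\mathbf E}$ alone (such as $x^2y^2\approx(xy)^2$ and $xty^2x\approx xtxy^2x$) succeed. Indeed, $\overleftarrow{\mathbf E}$ satisfies $xyx\approx x^2yx$ and $xty^2x\approx xtxy^2x$, so if it satisfied some $\tau_m$ it would be Cross by Lemma~\ref{L: O-2}, yet it contains $\overleftarrow{\mathbf R}$. The missing input is $\sigma_2$, which defines $\overleftarrow{\mathbf R_2}$ within $\overleftarrow{\mathbf E}$ and also holds in $\mathbf A_0$. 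It implies $\sigma_3$, hence $\tau_3$ modulo $xyx\approx x^2yx\approx xyx^2$.

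In the case $\mathbf A_0\nsubseteq\mathbf V$ the required non-containments of $\mathbf B$, $\mathbf R$, $\overleftarrow{\mathbf R}$ in $\mathbf S$ are not established.
\begin{itemize}
\item It is false that a finitely generated variety cannot contain a non-finitely generated one. A finite monoid can generate a variety with continuum many subvarieties, only countably many of which are finitely generated. Your remark only gives $\mathbf R\ne\mathbf S$.
\item Nor does $\mathbf S$ satisfy $\overleftarrow{\sigma_2}$, since $\overleftarrow{\mathbf R_2}$ contains $\overleftarrow{\mathbf B_2}$.
\item The separating identities are exactly $\overleftarrow{\tau_3}$ and $\tau_3$. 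The variety $\mathbf R\subseteq\mathbf E$ satisfies $xyx\approx xyx^2$ and $xy^2tx\approx xy^2xtx$. Also, $\overleftarrow{\tau_3}$ implies the mirror image of $\tau_4$. So by the dual of Lemma~\ref{L: O-2}, $\mathbf R$ cannot satisfy $\overleftarrow{\tau_3}$ without being Cross, contradicting Proposition~\ref{P: E1}(ii). Dually, $\overleftarrow{\mathbf R}$ violates $\tau_3$.
\item For $\mathbf B$, use $xy^2z^2x\approx xy^2xz^2x$, which holds in all three joinands and fails in $\mathbf B$ by Lemma~\ref{L: x=xx}.
\end{itemize}
Finally, the non-finite basedness of $\mathbf S$ is your only source of non-Crossness. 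The paper takes it from Theorem~7.5 of Gusev and Sapir, not from the Sapir paper used for $\mathbf N$. As written, your proof rests on an unverified guess at the reference.
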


\begin{proof}
First, notice that $\mathbf S=\mathbf A_0\vee\mathbf R_2\vee \overleftarrow{\mathbf R_2}$ satisfies $\tau_3$ and $\overleftarrow{\tau_3}$.
Indeed, $\mathbf A_0\vee\overleftarrow{\mathbf R_2}$ satisfy $\sigma_2$ which evidently implies $\tau_3$.
The identity  $\tau_3$ holds in $\mathbf R_2$ because it is a consequence of $xy^2x\approx x^2y^2$.
By the dual argument, we can show that $\mathbf S$ satisfies $\overleftarrow{\tau_3}$.

The variety $\mathbf S$ is finitely generated because the varieties $\mathbf A_0$, $\mathbf R_2$ and $\overleftarrow{\mathbf R_2}$ are so.
It is shown in~\cite[Theorem~7.5]{Gusev-Sapir-26} that the variety $\mathbf S$ is non-finitely based, whence it is not a Cross variety.
Let $\mathbf V$ be a proper subvariety of $\mathbf S$.
Clearly, either $\mathbf A_0\nsubseteq\mathbf V$ or $\mathbf R_2\nsubseteq\mathbf V$ or $ \overleftarrow{\mathbf R_2}\nsubseteq\mathbf V$.

Suppose that $\mathbf A_0\nsubseteq\mathbf V$.
Then $\mathbf V$ satisfies the identity $x^2y^2\approx (xy)^2$ by Lemma~\ref{L: A01 nsubseteq V}.
Since $\tau_3$ [respectively, $\overleftarrow{\tau_3}$] holds in $\mathbf S$ but does not hold in $\overleftarrow{\mathbf R}$ [respectively, $\mathbf R$], while $xy^2z^2x\approx xy^2xz^2x$ holds in $\mathbf S$ but does not hold in $\mathbf B$ by Lemma~\ref{L: x=xx}, the variety $\mathbf V$ does not contain $\mathbf B$, $\mathbf R$, or $\overleftarrow{\mathbf R}$ as subvarieties.
Now Proposition~\ref{P: xyx=xxyx=xyxx,xxyy=xyxy} applies, yielding that $\mathbf V$ is a Cross variety.

Now suppose that $\mathbf R_2\nsubseteq\mathbf V$.
Since $\overleftarrow{\mathbf A}$ is not a subvariety of $\mathbf S$, Lemma~\ref{L: E1{sigma2} or dA1 nsubseteq V} implies that $\mathbf V$ satisfies $xty^2x\approx xtxy^2x$.
Then $\mathbf V$ is a Cross variety by Lemma~\ref{L: O-2}.
By the dual argument, we can show that $\mathbf V$ is a Cross variety whenever $\overleftarrow{\mathbf R_2}\nsubseteq\mathbf V$.

Thus, we have proved that $\mathbf V$ is a Cross variety in any case. 
Since $\mathbf V$ is an arbitrary proper subvariety of $\mathbf S$, it follows that $\mathbf S$ is an almost Cross variety.
\end{proof}

\begin{proposition}[{\!\cite[Proposition~4.9]{Gusev-Lee-Zhang-26}}]
\label{P: Zhang-Luo}
The variety $\mathbf Z$ is a non-finitely based\textup, finitely generated almost Cross variety.
\end{proposition}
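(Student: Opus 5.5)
The statement is Proposition~\ref{P: Zhang-Luo}, cited from~\cite{Gusev-Lee-Zhang-26}. If I had to prove it myself, I would mirror the proofs of Propositions~\ref{P: N} and~\ref{P: S}.

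\emph{Finite generation.} The variety $\mathbf Z=\mathbf A\vee\overleftarrow{\mathbf A}$ is generated by the finite monoid $A\times\overleftarrow{A}$, where $\overleftarrow{A}$ is the dual of $A$.

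\emph{Non-finite basis.} Since $\mathbf A=\mathbf H\{\sigma_3\}$ is a proper subvariety of the almost Cross variety $\mathbf H$, it is Cross, and so is its dual. Thus the only property of $\mathbf Z$ that can fail is finite basedness. To show it does fail, I would exhibit an infinite family of identities of $\mathbf Z$ that no finite subset of its identities implies. Natural candidates are identities combining $\sigma_m$-type suffixes with $\overleftarrow{\sigma_m}$-type prefixes, for example
\[
\biggl(\prod_{i=n}^1 \mathbf a_it_i\biggr)xy\biggl(\prod_{i=1}^n t_i\mathbf a_i\biggr)\approx\biggl(\prod_{i=n}^1 \mathbf a_it_i\biggr)yx\biggl(\prod_{i=1}^n t_i\mathbf a_i\biggr).
\]
The argument would be the standard isoterm/stable-set one used in~\cite{Sapir-23,Gusev-Sapir-26}. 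I expect this to be the main obstacle: one needs a set of words stable under every identity of $\mathbf Z$ in fewer than $n$ letters but not under the $n$-th identity.

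\emph{Every proper subvariety is Cross.} Let $\mathbf V\subset\mathbf Z$ be proper, so $\mathbf A\nsubseteq\mathbf V$ or $\overleftarrow{\mathbf A}\nsubseteq\mathbf V$. By duality, assume $\overleftarrow{\mathbf A}\nsubseteq\mathbf V$.
\begin{itemize}
\item First I would check that $\mathbf Z$ satisfies $xyx\approx x^2yx\approx xyx^2$ and does not contain $\mathbf R_2$. For the latter it suffices to find an identity of $\mathbf A$ and $\overleftarrow{\mathbf A}$ failing in $\mathbf R_2$. A candidate is $(xy)^2\approx(yx)^2$, which holds in $\mathbf H$ and hence in both $\mathbf A$ and $\overleftarrow{\mathbf A}$. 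It fails in $\mathbf R_2$: that variety contains $\mathbf B_2$, and $\ini$ separates the two sides (Lemma~\ref{L: word problem LRB}).
\item Lemma~\ref{L: E1{sigma2} or dA1 nsubseteq V} then gives $xty^2x\approx xtxy^2x$ in $\mathbf V$.
\item Next I need $\tau_m$ for some $m\ge2$. In $\mathbf A$, $\sigma_3$ holds and implies $\tau_3$. In $\overleftarrow{\mathbf A}$, I would derive $\tau_3$ much as $\overleftarrow{\sigma_2}$ was derived for $\mathbf A$ in the proof of Proposition~\ref{P: N}. Write $\mathbf d:=t_1xt_2yt_3x$. Since $x$ and $y$ reoccur later in $\mathbf d$, the dual of $xty^2x\approx xtxy^2x$, namely $xy^2tx\approx xy^2xtx$, together with $(xy)^2\approx(yx)^2$ and $xyx\approx xyx^2\approx x^2yx$, should give
\[
x^2y^2\mathbf d\approx (x^2y^2)^2\mathbf d\approx xy^2x\,\mathbf d.
\]
Alternatively, $\tau_3$ should follow from Lemma~\ref{L: does not contain H} together with the dual of Lemma~\ref{L: does not contain Rn}, as in Proposition~\ref{P: xyx=xxyx,xtyyx=xtxyyx}.
\item With $xyx\approx x^2yx$, $xty^2x\approx xtxy^2x$ and $\tau_3$ in hand, Lemma~\ref{L: O-2} shows $\mathbf V$ is Cross.
\end{itemize}
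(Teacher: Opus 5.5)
The paper does not prove this statement itself; it imports it wholesale from \cite[Proposition~4.9]{Gusev-Lee-Zhang-26}. Your finite-generation argument is correct. Your proof that every proper subvariety of $\mathbf Z$ is Cross is also correct, and it is the same argument the paper gives for $\mathbf N$ in the case $\mathbf R_2\nsubseteq\mathbf V$. The step in $\overleftarrow{\mathbf A}$ is simpler than you suggest: $x^2y^2t_1x\approx x^2y^2xt_1x\approx xy^2xt_1x$ by $xy^2tx\approx xy^2xtx$ and $x^2yx\approx xyx$. So $\overleftarrow{\mathbf A}$ satisfies $\tau_1$, and hence $\tau_3$.

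The genuine gap is non-finite basedness. It is what makes $\mathbf Z$ non-Cross, and you do not prove it. Moreover, the family you propose cannot witness it. The proof of Proposition~\ref{P: N} shows that $\mathbf A$ satisfies $\overleftarrow{\sigma_2}$. Hence $\overleftarrow{\mathbf A}$ satisfies $\sigma_2$, and therefore $\sigma_3$ (multiply on the right by $t_3x$). Since $\mathbf A$ satisfies $\sigma_3$ as well, $\sigma_3$ holds in $\mathbf Z$. For $n\ge3$, your $n$-th identity is obtained from $\sigma_3$ by multiplying on the left by $\prod_{i=n}^1\mathbf a_it_i$ and on the right by $\prod_{i=4}^n t_i\mathbf a_i$. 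So the entire family follows from a single five-letter identity of $\mathbf Z$.

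To prove non-finite basedness you need, for every $n$, an identity of $\mathbf Z$ that is not derivable from the identities of $\mathbf Z$ in at most $n$ letters. You also need an argument (stable sets, isoterms, or similar) certifying that non-derivability. Producing such a family is exactly the nontrivial content of the result the paper cites. Without it, or a citation of it (as the paper does for $\mathbf N$ and $\mathbf S$ via \cite{Sapir-23} and \cite{Gusev-Sapir-26}), your proposal establishes only that $\mathbf Z$ is finitely generated and that its proper subvarieties are Cross.

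Finally, the inference ``$\mathbf A$ and $\overleftarrow{\mathbf A}$ are Cross, so only finite basedness can fail for $\mathbf Z$'' is not valid in general, since smallness need not be preserved under joins. Nothing in your argument depends on it, however.
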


\section{Characterization of Cross varieties}
\label{sec: characterization}

\begin{theorem}
\label{T: main result}
A variety of aperiodic monoids is Cross if and only if it excludes every one of
the following almost Cross varieties: 
\begin{equation}
\label{D: 22 almost}
\mathbf B,\ \mathbf F,\ \overleftarrow{\mathbf F},\  \mathbf G,\  \overleftarrow{\mathbf G},\  \mathbf H,\  \overleftarrow{\mathbf H},\  \mathbf I, \ \overleftarrow{\mathbf I},\  \mathbf K,\  \overleftarrow{\mathbf K},\ \mathbf L,\ \mathbf N,\  \overleftarrow{\mathbf N},\  \mathbf P,\  \overleftarrow{\mathbf P},\ \mathbf R,\  \overleftarrow{\mathbf R},\ \mathbf S,\ \mathbf Y_1,\ \mathbf Y_2,\ \mathbf Z. 
\end{equation}
Consequently, these 22 varieties exhaust all almost Cross varieties of aperiodic monoids.
\end{theorem}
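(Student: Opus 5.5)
The ``only if'' part is immediate: each of the 22 varieties in~\eqref{D: 22 almost} has been shown above (or in the cited literature) to be non-Cross, and a Cross variety cannot contain a non-Cross subvariety, since finite basedness fails to pass down only through infinitely many subvarieties. For the ``if'' part I would take a variety $\mathbf V$ of aperiodic monoids excluding all 22 varieties and prove it is Cross by a case split on which short words are isoterms for $\mathbf V$. This reduces to the identities handled in Section~\ref{sec: almost}. The final clause then follows from Zorn's lemma as explained in the introduction: any almost Cross variety of aperiodic monoids contains one of the 22, hence equals it.

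First, if $xy$ is not an isoterm, then $\mathbf V$ is commutative or idempotent by Lemma~\ref{L: xy isoterm}. An aperiodic commutative variety is Cross by Corollary~\ref{C: com id Cross}(i). An idempotent one is a proper subvariety of $\mathbf B$, hence Cross by Corollary~\ref{C: com id Cross}(ii). So assume $M(xy)\in\mathbf V$. Next, since $\mathbf Y_1,\mathbf Y_2,\mathbf L\nsubseteq\mathbf V$, I would use Lemma~\ref{L: isoterm} to show that $\mathbf V$ satisfies a nontrivial identity with left side $xzxyty$, one with left side $xyzxty$, one with left side $xtyzxy$, and one with left side among the words $xt_1x\cdots t_nx$. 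Aperiodicity gives $x^n\approx x^{n+1}$ for some $n$.

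The main obstacle is to turn these identities into $xyx\approx xyx^2$ or its dual, and then into one of the ``normal forms'' of Section~\ref{sec: almost}. I would try to prove this via a case analysis on whether $xyx$ is an isoterm. If $xyx$ is an isoterm, then $\mathbf M(xyx)\subseteq\mathbf V$. The exclusion of $\mathbf L$, $\mathbf Y_1$, $\mathbf Y_2$ should then force $\mathbf V$ into a small, finitely based region; presumably one appeals to the literature on varieties containing $\mathbf M(xyx)$, which I would try to cite. If $xyx$ is not an isoterm, then an identity $xyx\approx\mathbf w$ combined with aperiodicity should yield $xyx\approx xyx^2$, $xyx\approx x^2yx$, or $xyxz\approx xyxzx$ up to duality.

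Once these identities are in place, I would dispatch the remaining cases as follows.
\begin{itemize}
\item Under $xyxz\approx xyxzx$ (or its dual), apply Proposition~\ref{P: xyx=xxyx} using $\mathbf F,\mathbf G\nsubseteq\mathbf V$.
\item Under $xyx\approx xyx^2$ but not $x^2yx$, if $\mathbf F\{\alpha_1\}\vee\overleftarrow{\mathbf D}\subseteq\mathbf V$, use Lemma~\ref{L: does not contain I} with $\mathbf I\nsubseteq\mathbf V$, Lemma~\ref{L: does not contain K1} with $\mathbf K\nsubseteq\mathbf V$, and Lemma~\ref{L: does not contain P} with $\mathbf P\nsubseteq\mathbf V$. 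These give $xzyxty\approx xzxyxty$, $xty^2x\approx xty(yx)^2$, and $\sigma_m$, and from them I would derive containment in the dual of $\mathbf O$ and Crossness via the dual of Lemma~\ref{L: O}. If $\mathbf F\{\alpha_1\}$ or $\overleftarrow{\mathbf D}$ is missing, that should again yield $xyxz\approx xyxzx$ or $x^2yx\approx xyx$.
\item Finally, under $xyx\approx x^2yx\approx xyx^2$:
\begin{itemize}
\item If $\mathbf A_0\nsubseteq\mathbf V$, Lemma~\ref{L: A01 nsubseteq V} gives $x^2y^2\approx(xy)^2$, and Proposition~\ref{P: xyx=xxyx=xyxx,xxyy=xyxy} applies.
\item Otherwise, if $\mathbf R_2$ and $\overleftarrow{\mathbf A}$ are both absent, Lemma~\ref{L: E1{sigma2} or dA1 nsubseteq V} gives $xty^2x\approx xtxy^2x$, and Proposition~\ref{P: xyx=xxyx,xtyyx=xtxyyx} applies with $\mathbf H,\overleftarrow{\mathbf R}$ absent; the dual case is symmetric.
\item The remaining configurations are $\mathbf A\vee\mathbf R_2$, $\mathbf A\vee\overleftarrow{\mathbf A}$, and $\mathbf A_0\vee\mathbf R_2\vee\overleftarrow{\mathbf R_2}$. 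Each is one of $\mathbf N$, $\mathbf Z$, $\mathbf S$, which are excluded.
\end{itemize}
\end{itemize}
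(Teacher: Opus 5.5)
\textbf{There is a genuine gap, mainly in the case where $xyx$ is an isoterm.}

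First, $\mathbf Y_2\nsubseteq\mathbf V$ only tells you that \emph{one} of $xyzxty$, $xtyzxy$ fails to be an isoterm, not both. What you actually obtain, via Sapir's description of non-isoterms when $xyx$ is an isoterm, is that $\mathbf V$ satisfies $\{xzxyty\approx xzyxty,\ xzytxy\approx xzytyx\}$ or its dual.

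More seriously, excluding $\mathbf L$, $\mathbf Y_1$, $\mathbf Y_2$ does not force Crossness in this case. Take $\mathbf M(xyx)\vee\mathbf P$:
\begin{itemize}
\item it contains $M(xyx)$;
\item it satisfies both identities above, so it excludes $\mathbf Y_1$ and $\mathbf Y_2$;
\item it satisfies $xt_1xt_2x\approx xt_1xt_2x^2$ (check this in $M(xyx)$ directly, and in $\mathbf P$ via $xyx\approx xyx^2$), so it excludes $\mathbf L$;
\item yet it contains the non-Cross variety $\mathbf P$.
\end{itemize}
What you overlook is that an almost Cross subvariety of $\mathbf V$ need not contain $M(xyx)$ even when $\mathbf V$ does. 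The paper closes this case with the classification that $\mathbf L$ and $\mathbf P$ are the only almost Cross varieties of aperiodic monoids satisfying $\{xzxyty\approx xzyxty,\ xzytxy\approx xzytyx\}$. So $\mathbf P$ and $\overleftarrow{\mathbf P}$ are needed exactly here. Your sketch gives this case no argument and the wrong list of obstructions.

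\textbf{Your reduction in the non-isoterm case is also incomplete.} Since $xy$ is an isoterm, the identity has the form $xyx\approx x^pyx^q$. If $p=0$ (or dually $q=0$), you only get $xyx\approx yx^q$, and aperiodicity does not produce any of $xyx\approx xyx^2$, $xyx\approx x^2yx$, $xyxz\approx xyxzx$, or their duals. For example, $\var\{xyx\approx yx^2,\ x^3\approx x^4\}$ is aperiodic and has $xy$ as an isoterm. Its identities preserve the order of last occurrences of letters and the occurrence counts capped at $3$, so it violates all of those identities. This case needs a separate input: Lee's theorem that every periodic variety satisfying $xyx\approx yx^2$ is Cross. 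Only when $p,q\ge1$ does aperiodicity give $x^2\approx x^3$, and hence $xyx\approx xyx^2$ up to duality.

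\textbf{Smaller points.}
\begin{itemize}
\item Lemma~\ref{L: does not contain P} requires $\mathbf Q\subseteq\mathbf V$. The case $Q\notin\mathbf V$ must be handled separately via Lemma~\ref{L: Q nsubseteq V} and Lemma~\ref{L: O}.
\item The identities from Lemmas~\ref{L: does not contain I} and~\ref{L: does not contain K1} place $\mathbf V$ in $\mathbf O$ itself, not its dual. That is why $\sigma_m$, rather than $\overleftarrow{\sigma_m}$, suffices via Lemma~\ref{L: O}.
\item The implications ``$\overleftarrow{\mathbf D}\nsubseteq\mathbf V\Rightarrow xyxz\approx xyxzx$'' and ``$\mathbf F\{\alpha_1\}\nsubseteq\mathbf V\Rightarrow xyx^2\approx x^2yx$'' are external results that you must cite; they do not follow from what you wrote.
\end{itemize}
Your final analysis under $xyx\approx x^2yx\approx xyx^2$ (the $\mathbf A_0$, $\mathbf N$, $\mathbf S$, $\mathbf Z$ configurations, with $\overleftarrow{\mathbf N}$ as the fourth) is correct and matches the paper.
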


\begin{proof}
The variety $\mathbf B$ is almost Cross by Proposition~\ref{P: B}, while the other varieties in~\eqref{D: 22 almost} were shown to be almost Cross in Section~\ref{sec: almost}. Hence, all of them are excluded from any Cross variety.
Conversely, suppose that~$\mathbf V$ is a variety of aperiodic monoids, so that 
\begin{itemize}
\item[\textup{($\ast$)}] $\mathbf V$ contains none of the varieties from~\eqref{D: 22 almost}. 
\end{itemize}
If $\mathbf V$ is either commutative or idempotent, then it is Cross by Corollary~\ref{C: com id Cross}.
Therefore, suppose that $\mathbf V$ is neither commutative nor idempotent, whence $M(xy)\in\mathbf V$ by Lemmas~\ref{L: isoterm} and~\ref{L: xy isoterm}. 
Two cases are possible.

\medskip

\noindent\textsc{Case~1}:  $M(xyx)\in\mathbf V$. 
Since $\mathbf Y_1,\mathbf Y_2\nsubseteq\mathbf V$ by~($\ast$), Lemma~\ref{L: isoterm} implies that one of the following holds:
\begin{itemize}
\item $xyx$ is an isoterm for $\mathbf V$ and $xzxyty$ and $xzytxy$ are not isoterms for $\mathbf V$;
\item $xyx$ is an isoterm for $\mathbf V$ and $xzxyty$ and $xyzxty$ are not isoterms for $\mathbf V$.
\end{itemize}
In view of~\cite[Fact~3.1]{Sapir-15}, it follows that $\mathbf V$ satisfies the set $\{xzxyty\approx xzyxty,\,xzytxy\approx xzytyx\}$ or its dual $\{xzxyty\approx xzyxty,\,xyzxty\approx yxzxty\}$.
According to~\cite[Proposition~4.1]{Gusev-25IJAC}, $\mathbf L$ and $\mathbf P$ are the only almost Cross varieties of aperiodic monoids satisfying $\{xzxyty\approx xzyxty,\,xzytxy\approx xzytyx\}$.
This fact and~($\ast$) imply that $\mathbf V$ is a Cross variety.

\medskip

\noindent\textsc{Case~2}: $M(xyx)\notin\mathbf V$. 
In view of Lemma~\ref{L: isoterm}, the variety $\mathbf V$ satisfies a non-trivial identity of the form $xyx\approx \mathbf w$. 
Since $xy$ is an isoterm for $\mathbf V$, we have $\mathbf w =x^pyx^q$ for some $p$ and $q$ such that $p\ge 2$ or $q\ge 2$. 
By symmetry, we may assume that $q\ge 2$. 
If $p=0$, then $\mathbf V$ satisfies the identity $x^2\approx x^q$ and, therefore, the identity $xyx\approx yx^2$. 
It is proved in~\cite[the dual to Corollary~3.6]{Lee-14} that every periodic variety that satisfies the last identity is Cross. 
Hence $\mathbf V$ is a Cross variety. 
So, we may further assume that $p\ge 1$. 
Then $\mathbf V$ satisfies the identity $x^2\approx x^{p+q}$. 
Since the variety $\mathbf V$ consists of aperiodic monoids, it satisfies the identity $x^n\approx x^{n+1}$ for some $n\ge1$. 
This identity together with $x^2\approx x^{p+q}$ imply the identity $x^2\approx x^3$.  
It can be easily deduced from this that the identities $xyx\approx x^pyx^q\approx x^pyx^{q+1}\approx xyx^2$ are satisfied by $\mathbf V$.

If $\overleftarrow{\mathbf D} \nsubseteq \mathbf V$, then it follows from~\cite[the dual of Lemma~4.3]{Gusev-25SF} that $\mathbf V$ satisfies $x^2y \approx x^2yx$ and so the identities
\[
xyxz\stackrel{xyx\approx xyx^2}\approx xyx^2z\stackrel{x^2y\approx x^2yx}\approx xyx^2zx\stackrel{xyx\approx xyx^2}\approx xyxzx.
\]
In this case, since $\mathbf V$ does not contain $\mathbf F$ and $\mathbf G$ by~($\ast$), Proposition~\ref{P: xyxz=xyxzx} implies that $\mathbf V$ is a Cross variety.
Hence, we may further assume that $\overleftarrow{\mathbf D}\subseteq \mathbf V$.
Two cases are possible.

\medskip

\noindent\textsc{Case~2.1}:  $\mathbf F\{\alpha_1\}\subseteq \mathbf V$. 
Since $\mathbf V\ne\mathbf I$ by~($\ast$), Lemma~\ref{L: does not contain I} implies that $\mathbf V$ satisfies the identity $xzyxty\approx xzxyxty$. 
Further, since $\mathbf V\ne\mathbf K$  by~($\ast$), it follows from Lemma~\ref{L: does not contain K1} that $\mathbf V$ satisfies the identity $xty^2x \approx xty(yx)^2$.
Consequently, $\mathbf V$ satisfies
\[
xsytyx\stackrel{xyx\approx xyx^2}\approx xsyty^2x \stackrel{xty^2x \approx xty(yx)^2} \approx xsyty(yx)^2\stackrel{xyx\approx xyx^2}\approx xsyt(yx)^2 \stackrel{xtyxsy\approx xtxyxsy}\approx xsytxyx.
\]
Hence $\mathbf V\subseteq\mathbf O$. 
If $Q \notin \mathbf V$, then it follows from Lemma~\ref{L: Q nsubseteq V} that $\mathbf V$ satisfies $x^2yzx \approx x^2yxzx$, whence~$\mathbf V$ is Cross by Lemma~\ref{L: O}.
Hence assume that $Q\in \mathbf V$.
Then, since $\mathbf V\ne\mathbf P$ by ($\ast$), it follows from Lemma~\ref{L: does not contain P} that $\mathbf V$ satisfies $\sigma_k$ for some $k\ge 2$.
Now Lemma~\ref{L: O} applies again, yielding that the $\mathbf V$ is a Cross variety.

\medskip

\noindent\textsc{Case~2.2}: $\mathbf F\{\alpha_1\}\nsubseteq \mathbf V$. 
Then~\cite[Lemma~2.10]{Gusev-25IJAC} implies that $\mathbf V$ satisfies the identity $xyx^2\approx x^2yx$. 
Assume that $A_0\notin\mathbf V$. 
Then $\mathbf V$ satisfies $x^2y^2\approx (xy)^2$ by Lemma~\ref{L: A01 nsubseteq V}.
Since $\mathbf V$ excludes the varieties $\mathbf B$, $\mathbf R$ and $\overleftarrow{\mathbf R}$ by~($\ast$), it follows from Proposition~\ref{P: xyx=xxyx=xyxx,xxyy=xyxy} that $\mathbf V$ is a Cross variety.
So, we may further assume that $A_0\in\mathbf V$. 
Since $\mathbf Z=\mathbf A\vee \overleftarrow{\mathbf A}\nsubseteq \mathbf V$, either $\mathbf A\nsubseteq \mathbf V$ or $\overleftarrow{\mathbf A}\nsubseteq \mathbf V$. 
By symmetry, we may assume without loss of generality that $\overleftarrow{\mathbf A}\nsubseteq \mathbf V$. 
If $\mathbf R_2\nsubseteq\mathbf V$, then $\mathbf V$ satisfies the identity $xty^2x\approx xtxy^2x$ by Lemma~\ref{L: E1{sigma2} or dA1 nsubseteq V}. 
In this case,  since $\mathbf H,\overleftarrow{\mathbf R}\nsubseteq\mathbf V$ by~($\ast$), Proposition~\ref{P: xyx=xxyx,xtyyx=xtxyyx} implies that $\mathbf V$ is a Cross variety.
Now assume that $\mathbf R_2\subseteq\mathbf V$.
Then $\overleftarrow{\mathbf R_2}\nsubseteq\mathbf V$ because $\mathbf V$ excludes $\mathbf S=\mathbf A_0\vee \mathbf R_2\vee \overleftarrow{\mathbf R_2}$, and $\mathbf A\nsubseteq\mathbf V$ because $\mathbf V$ excludes $\mathbf N=\mathbf A\vee \mathbf R_2$.
Now the results dual of Proposition~\ref{P: xyx=xxyx,xtyyx=xtxyyx} and Lemma~\ref{L: E1{sigma2} or dA1 nsubseteq V}  apply, yielding that $\mathbf V$ is a Cross variety.
\end{proof}

\paragraph{\textbf{Acknowledgements.}} The author thanks Edmond W. H. Lee and the anonymous referee for their comments and suggestions for improving the manuscript.

\end{document}